\newtheorem{theorem}{Theorem}[section]
\newtheorem{lemma}[theorem]{Lemma}
\newtheorem{proposition}[theorem]{Proposition}
\newtheorem{prop}[theorem]{Proposition}
\theoremstyle{definition}
\newtheorem{example}[theorem]{Example}
\theoremstyle{remark}
\newtheorem{remark}[theorem]{Remark}
\newtheorem{conjecture}[theorem]{Conjecture}
\numberwithin{equation}{section}
\newcommand{\calA}{\mathcal{A}}
\newcommand{\calB}{\mathcal{B}}
\newcommand{\calO}{\mathcal{O}}
\newcommand{\calS}{\mathcal{S}}
\newcommand{\la}{\langle}
\newcommand{\ra}{\rangle}
\def\Aut{{\text{Aut}}}
\def\PGL{{\text{PGL}}}
\def\Num{{\rm{Num}}}
\def\deg{{\text{deg}}}
\def\Num{{\text{Num}}}
\def\II{{\text{II}}}
\begin{document}
\title [Enriques surfaces] {Classification of Enriques surfaces covered by the supersingular $K3$ surface with Artin invariant 1 in characteristic 2}

\author{Shigeyuki Kond\=o}
\address{Graduate School of Mathematics, Nagoya University, Nagoya,
464-8602, Japan}
\email{kondo@math.nagoya-u.ac.jp}
\thanks{Research of the author is partially supported by
Grant-in-Aid for Scientific Research (S) No. 15H05738.}

\begin{abstract}
We classify Enriques surfaces covered by the supersingular $K3$ surface with the Artin invariant 1 in
characteristic 2.  There are exactly three types of such Enriques surfaces.  
%We also give a notion of $R$-invariants for Enriques surfaces whose canonical coverings are supersingular $K3$ surfaces with rational double points.
\end{abstract}

\maketitle

\rightline{Dedicated to Toshiyuki Katsura on  his 70th birthday}
%\bigskip

\section{Introduction}\label{sec1}

In this paper we work over an algebraically closed field $k$ of characteristic 2.  
It is known that there exist complex Enriques surfaces whose covering $K3$ surfaces are mutually isomorphic.  In \cite{Ko} the author gave two non isomorphic Enriques surfaces whose covering $K3$ surfaces
are the same Kummer surface, and
Ohashi \cite{Oh1}, \cite{Oh2} investigated such Enriques surfaces by using the theory of periods of Enriques surfaces.  In particular he showed that the number of isomorphism classes of such Enriques surfaces with a given $K3$ surface as their coverings is finite.

On the other hand, Enriques surfaces in characteristic 2 have a different phenomenon.
Ekedahl, Hyland and Shepherd-Barron \cite{EHS} showed that 
the moduli space of Enriques surfaces whose canonical covers are supersingular $K3$ surfaces with 
twelve nodes is an open set of a ${\bf P}^1$-bundle over the moduli space of lattice polarized (called $N$-marked in \cite{EHS}) supersingular $K3$ surfaces.  
Here ${\bf P}^1$ parametrizes derivations on such a $K3$ surface. Note that the moduli space of Enriques surfaces (resp. supersingular $K3$ surfaces) has dimension 10 (resp. dimension 9).  Thus the number of isomorphism classes
of Enriques surfaces with a given supersingular $K3$ surface as their canonical coverings is infinite in general.  

The purpose of this paper is to give an explicit description of Enriques surfaces whose canonical cover
is the most special supersingular $K3$ surface.
Recall that the moduli space of supersingular $K3$ surfaces is stratified by the Artin invariant $\sigma$ $(1\leq \sigma \leq 10)$ such
that $K3$ surfaces with Artin invariant $\sigma$ form a $(\sigma -1)$-dimensional family
(Artin \cite{A}).  Moreover
supersingular $K3$ surfaces with Artin invariant 1 are unique up to isomorphisms (Ogus \cite[Cor. 7.14]{Og}   for $p > 2$, Rudakov-Shafarevich \cite[\S 4]{RS2} for $p=2$).
%Also there are several examples of Enriques surfaces together with their canonical coverings \cite{KK1}, \cite{KK2}, \cite{KKM}.
In this paper we determine all Enriques surfaces
covered by the supersingular $K3$ surface with Artin invariant 1.
The following is the main theorem of this paper.  For precise statements, see Theorems \ref{Main1}, \ref{Main2}, Remark \ref{EHSB-unique}.

\begin{theorem}\label{main}
There exist exactly three types of Enriques surfaces such that the minimal resolutions of the canonical double covers of these Enriques surfaces are the supersingular $K3$ surface with Artin invariant $1$.
Each type of them forms a $1$-dimensional family.  
\end{theorem}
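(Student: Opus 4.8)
The plan is to fix once and for all the supersingular $K3$ surface $X$ of Artin invariant $1$ (unique by Ogus and Rudakov--Shafarevich) and to reduce the classification to an orbit problem for $\Aut(X)$. By the theorem of Ekedahl--Hyland--Shepherd-Barron, every Enriques surface whose canonical cover is a supersingular $K3$ surface with twelve nodes is encoded by two pieces of data: the lattice-polarization (the $N$-marking of \cite{EHS}) of that $K3$ surface, and a point of the fibre ${\bf P}^1$ parametrizing the relevant derivations. Since $X$ is rigid---the Artin stratum $\sigma=1$ is $0$-dimensional, so its moduli space is a single point---the only continuous parameter that survives is the derivation. Two such data produce isomorphic Enriques surfaces exactly when they lie in the same $\Aut(X)$-orbit, so the entire problem becomes: describe $\Aut(X)$, describe the set of derivations whose $\mu_2$- or $\alpha_2$-quotient is an Enriques surface with canonical cover $X$, and count the resulting orbits. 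Completeness is then almost formal: all Enriques surfaces covered by $X$ lie in the EHS moduli space and we merely restrict to the fibre over $[X]$, the only case to exclude being the singular (ordinary, $\mathbf{Z}/2\mathbf{Z}$-type) one, which cannot occur since the \'etale $K3$ cover of an ordinary Enriques surface is again ordinary, hence not supersingular.

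First I would make $X$ and $\Aut(X)$ explicit. The surface $X$ carries the supersingular $K3$ lattice of discriminant $2^2$, and its automorphism group is controlled through the action on $\NS(X)$ by the Borcherds/Conway machinery, giving a fundamental chamber together with its finite symmetry group. The discrete part of an Enriques datum---the configuration of the twelve nodes of the inseparable cover, equivalently a primitive embedding of the relevant ``Enriques lattice'' into $\NS(X)$---is then classified up to $\Aut(X)$ by a finite computation in this fundamental domain. This is the step that produces finiteness, and I expect it to yield exactly three orbits, which will be the three types.

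Next, for each of the three discrete types I would analyze the associated ${\bf P}^1$ of derivations. The stabilizer of the datum inside $\Aut(X)$ is finite and acts on this ${\bf P}^1$; I would check that (a) a dense open subset of ${\bf P}^1$ consists of derivations whose quotient is a smooth Enriques surface and whose cover has exactly twelve nodes, so that the minimal resolution of the cover is $X$ and not a rational surface or a more degenerate $K3$ surface, and (b) the Enriques surface genuinely varies along ${\bf P}^1$, so that---the stabilizer being finite---one obtains a $1$-dimensional, non-isotrivial family. Distinguishing the three families (for instance by whether the quotient is of $\mu_2$- or $\alpha_2$-type, and by the genus-one fibration and $(-2)$-curve structure descending from $X$) will show that they are pairwise non-isomorphic.

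The hard part will be the orbit count in the second step: controlling $\Aut(X)$ precisely enough to prove that there are exactly three orbits of Enriques data, and no more. This needs the full strength of the lattice-theoretic description of $\Aut(X)$---its reflection subgroup, the walls of a fundamental chamber, and the finite symmetry quotient---together with a delicate verification that each candidate datum actually admits derivations yielding an Enriques surface rather than a rational quotient. I expect the bookkeeping of which derivations degenerate (creating extra nodes, or destroying the $K3$ nature of the cover) to be the most technical obstacle, since it is exactly here that the open condition of the EHS ${\bf P}^1$-bundle must be made explicit for the single, highly symmetric surface $X$.
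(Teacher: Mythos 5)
Your reduction to the Ekedahl--Hyland--Shepherd-Barron ${\bf P}^1$-bundle has a genuine gap: that moduli description applies only to Enriques surfaces whose canonical cover has \emph{twelve nodes}, i.e.\ twelve rational double points of type $A_1$. The paper shows (Proposition \ref{EHS-sing}) that for a canonical cover whose resolution is the Artin invariant $1$ surface, the singularity configuration can a priori be $A_1^{\oplus 12}$, $A_1^{\oplus 8}\oplus D_4$, $A_1^{\oplus 4}\oplus D_4^{\oplus 2}$, or $A_1^{\oplus 6}\oplus D_6$, and one of the three families actually realized (type ${\rm MII}$) has a $D_4$ singularity and only eight nodes. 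So your ``almost formal'' completeness step --- restrict the EHS bundle to the fibre over the unique supersingular $K3$ with $\sigma=1$ --- silently discards exactly one of the three types, and conversely your expectation of ``three orbits'' of twelve-node data is off: within the twelve-node case there are only two types (${\rm MI}$ and ${\rm VII}$). To repair this you would have to extend the orbit analysis to configurations of $A_1$'s together with a $D_4$ (and then rule out $A_1^{\oplus 4}\oplus D_4^{\oplus 2}$ and $A_1^{\oplus 6}\oplus D_6$, which the paper does in Lemma \ref{possiblebisection13}), none of which is supplied by the cited EHS theorem. A secondary point: distinguishing the families by whether the quotient is of $\mu_2$- or $\alpha_2$-type cannot work, since the ${\rm MI}$ and ${\rm VII}$ families each contain both classical and supersingular members (the derivation $D_{a,b}$ satisfies $D_{a,b}^2=abD_{a,b}$ and degenerates to additive type at $a=b=0$ inside the same family).

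Beyond this gap, your route is genuinely different from the paper's. You propose a direct orbit count for ${\rm Aut}(Y)$ acting on embeddings of the ``Enriques data'' into ${\rm NS}(Y)$, using the Borcherds/Conway chamber structure; the paper instead exploits the Elkies--Sch\"utt classification and \emph{uniqueness} of the eight elliptic fibrations on $Y$: it picks a special elliptic fibration with a special bi-section on the hypothetical Enriques surface, enumerates the finitely many possible (fibration, bi-section, canonical point) configurations (Sections \ref{sec4}--\ref{sec5}), matches each to one of the three explicit examples, and then lifts to $Y$ where the uniqueness of the fibration forces the full nodal-curve graph to agree with the example. The EHS input is used only at the very end (Remark \ref{EHSB-unique}), to confirm via the $2$-dimensionality of the space of regular derivations that the explicit $1$-parameter families exhaust each type. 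If you carried out your orbit computation correctly (including the $D_4$ case) it would buy a more conceptual, moduli-theoretic proof, but the bookkeeping of which derivations yield an Enriques quotient --- the part you yourself flag as hardest --- is precisely what the paper's fibration-by-fibration analysis replaces.
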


\begin{remark}
The canonical covers of Enriques surfaces of two types have twelve nodes and the one of the remaining type has a rational double point of type $D_4$ and eight nodes.  
\end{remark}

\begin{remark}
Every Enriques surface $X$ in the three families has
a finite number of $(-2)$-divisors such that the reflection group generated by reflections associated
with these $(-2)$-divisors is of finite index in the orthogonal group of ${\rm Num}(X)$, where
${\rm Num}(X)$ is the N\'eron-Severi group of $X$ modulo
the torsion subgroup.
\end{remark}

We give examples of three types in Theorem \ref{main} explicitly.  The first one called of type $\rm MI$ was given in Katsura and Kondo \cite{KK1} which is a 1-dimensional family of classical and supersingular Enriques surfaces.
Their canonical covers have twelve nodes.  Each member $X$ of this family 
contains 30 nodal curves (non-singular rational curves) and 10 non-effective $(-2)$-divisors whose 
dual graph satisfies a condition for the finiteness of the index of the corresponding 
reflection group in the orthogonal group ${\rm O}({\rm Num}(X))$ (see Proposition \ref{Vinberg}).  The second type appeared as one of Enriques surfaces with finite automorphism group (called Type VII in Katsura and Kondo \cite{KK2}, Katsura, Kondo and Martin \cite{KKM}).  It is also a 1-dimensional family of classical and supersingular Enriques surfaces whose canonical covers have twelve nodes.  Each member of the family contains exactly 20 nodal curves whose dual graph satisfies the same condition. The third and final one called of type $\rm MII$ is new and will be given in section \ref{sec3}.  It is a 1-dimensional family of classical Enriques surfaces whose canonical covers have a rational double point of type $D_4$ and eight nodes.  It contains 28 nodal curves and 12 non-effective $(-2)$-divisors whose dual graph satisfies the same condition.

%In the paper \cite{N}, Nikulin introduced an important invariant, called the $R$-invariant, for complex Enriques surfaces which mesures how nodal curves sit on an Enriques surface.  This notion is easily extended to the case that the canonical covering is \'etale, that is, ${\rm char}(k)=p>2$ or singular Enriques surfaces in characteristic 2.  We extend this notion to the case that the canonical covers are supersingular $K3$ surfaces with rational double points in characteristic 2 and caluculate $R$-invariants of three examples (Proposition \ref{R-inv-examples}).

To prove Theorem \ref{main} we use the classification of all elliptic fibrations on
the supersingular $K3$ surfaces with Artin invariant 1 and their uniqueness due to Elkies and Sch\"utt
\cite{ES} in an essential way.  We fix one of the possible elliptic fibrations on such an Enriques 
surface $X$ and a bi-section of
this fibration.  Then we can see that there exists a unique type of Enriques surface $X'$ 
among three types
such that it has an elliptic fibration of the same type and a bi-section of given type.
By lifting the fibration to the canonical cover and applying the uniqueness of such elliptic fibration,
we can see that $X$ has the same configuration of nodal curves as that of $X'$.
Finally, together with a result
by Ekedahl, Hyland and Shepherd-Barron \cite[Theorem 3.21]{EHS}, these examples give all Enriques surfaces
covered by the supersingular $K3$ surface with the Artin invariant 1 (see Remark \ref{EHSB-unique}).

The plan of this paper is as follows.  In section \ref{sec2}, we recall the known results
on Enriques surfaces and supersingular $K3$ surfaces.
In section \ref{sec3}, we recall and give three examples of Enriques surfaces covered by the supersingular 
$K3$ surface with Artin invariant 1.
Section \ref{sec4} is devoted to possible singularities of the canonical covers of 
Enriques surfaces of desired type and possible types of
elliptic fibrations on them. In section \ref{sec5} we determine possibilities of bi-sections of each special elliptic fibrations, and 
in section \ref{sec6} we will state and give a proof of the main theorems \ref{Main1}, \ref{Main2}.
%, and in section \ref{sec6}, we discuss $R$-invariants of supersingular and classical Enriques surfaces whose canonical covers have only rational double points. 

\medskip
\noindent
{\bf Acknowledgement.} The author thanks Toshiyuki Katsura for valuable conversations and 
Shigeru Mukai for informing the author the dual graph of $(-2)$-vectors in the Example of
type ${\rm MII}$. The author thanks Matthias Sch\"utt and Yuya Matsumoto for reading the manuscript
and for suggesting misprints and useful comments.  In particular Matsumoto pointed out that
Enriques surfaces of type ${\rm MII}$ form a 1-dimensional family.  The author thanks the referee for careful reading of the manuscript, for pointing out errors and for useful suggestions.

\section{Preliminaries}\label{sec2}

A lattice is a free abelian group $L$ of finite rank equipped with 
a non-degenerate symmetric integral bilinear form $\langle . , . \rangle : L \times L \to {\bf Z}$. 
For a lattice $L$ and an integer $m$, we denote by $L(m)$ the free ${\bf Z}$-module $L$ 
with the bilinear form obtained from the bilinear form of $L$ by multiplication by $m$. 
The signature of a lattice is the signature of the real vector space $L\otimes {\bf R}$ 
equipped with the symmetric bilinear form extended from the one on $L$ by linearity. A lattice is called even if 
$\langle x, x\rangle \in 2{\bf Z}$ 
for all $x\in L$. 
We denote by $U$ the even unimodular lattice of signature $(1,1)$, 
and by $A_m, \ D_n$ or $\ E_k$ the even negative definite lattice defined by
the Cartan matrix of type $A_m, \ D_n$ or $\ E_k$ respectively.    
We denote by $L\oplus M$ the orthogonal direct sum of lattices $L$ and $M$, 
and by $L^{\oplus m}$ the orthogonal direct sum of $m$-copies of $L$.
Let ${\rm O}(L)$ be the orthogonal group of $L$, that is, the group of isomorphisms of $L$ preserving the bilinear form.
%For an even lattice $L$, we denote $L^*$ the dual of $L$ and define a map 
%\begin{equation}\label{discr-quadratic}
%q_L: L^*/L \to {\bf Q}/2{\bf Z}, \quad q_L(x \ {\rm mod}\ L) = x^2 \ {\rm mod}\ 2{\bf Z},
%\end{equation}
%which is called the discriminant quadratic form of $L$.  
%For more details we refer the reader to Nikulin \cite{N}.

Let $k$ be an algebraically closed field of characteristic $p > 0$,
and let $S$ be a nonsingular complete algebraic surface defined over $k$.
We denote by $K_{S}$ the canonical divisor of $S$.
A rational vector field $D$ on $S$ is said to be $p$-closed if there exists
a rational function $f$ on $S$ such that $D^p = fD$. 
A vector field $D$ is of additive type (resp. of multiplicative type) if 
$D^p=0$ (resp. $D^p=D$).
Let $\{U_{i} = {\rm Spec} A_{i}\}$ be an affine open covering of $S$. We set 
$A_{i}^{D} = \{\alpha \in A_{i} \mid D(\alpha) = 0\}$. 
The affine varieties $\{U_{i}^{D} = {\rm Spec} A_{i}^{D}\}$ glue together to 
define a normal quotient surface $S^{D}$.

Now, we  assume that $D$ is $p$-closed. Then,
the natural morphism $\pi : S \longrightarrow S^D$ is a purely
inseparable morphism of degree $p$. 
If the affine open covering $\{U_{i}\}$ of $S$ is fine enough, then
taking local coordinates $x_{i}, y_{i}$
on $U_{i}$, we see that there exist $g_{i}, h_{i}\in A_{i}$ and 
a rational function $f_{i}$
such that the divisors defined by $g_{i} = 0$ and by $h_{i} = 0$ have no common components,
and such that
$$
 D = f_{i}\left(g_{i}\frac{\partial}{\partial x_{i}} + h_{i}\frac{\partial}{\partial y_{i}}\right)
\quad \mbox{on}~U_{i}.
$$
By Rudakov and Shafarevich \cite[Section 1]{RS}, divisors $(f_{i})$ on $U_{i}$
glue to a global divisor $(D)$ on $S$, and the zero-cycle defined
by the ideal $(g_{i}, h_{i})$ on $U_{i}$ gives rise to a well-defined global zero cycle 
$\langle D \rangle $ on $S$. A point contained in the support of
$\langle D \rangle $ is called an isolated singular point of $D$.
If $D$ has no isolated singular point, $D$ is said to be divisorial.
Rudakov and Shafarevich \cite[Theorem 1, Corollary]{RS}
showed that $S^D$ is nonsingular
if $\langle D \rangle  = 0$, i.e., $D$ is divisorial.
When $S^D$ is nonsingular,
they also showed a canonical divisor formula
\begin{equation}\label{canonical}
K_{S} \sim \pi^{*}K_{S^D} + (p - 1)(D),
\end{equation}
where $\sim$ means linear equivalence.
As for the Euler number $c_{2}(S)$ of $S$, we have a formula
\begin{equation}\label{euler}
c_{2}(S) = \deg \langle D \rangle  - \langle K_{S}, (D)\rangle - (D)^2
\end{equation}
(cf. Katsura and Takeda \cite[Proposition 2.1]{KT}). 
Now we consider an irreducible curve $C$ on $S$ and we set $C' = \pi (C)$.
Take an affine open set $U_{i}$ as above such that $C \cap U_{i}$ is non-empty.
The curve $C$ is said to be integral with respect to the vector field $D$
if $g_{i}\frac{\partial}{\partial x_{i}} + h_{i}\frac{\partial}{\partial y_{i}}$
is tangent to $C$ at a general point of $C \cap U_{i}$. Then, Rudakov-Shafarevich
\cite[Proposition 1]{RS} showed the following proposition:

\begin{prop}\label{insep}

\noindent
\begin{itemize}
\item[$(1)$]  
If $C$ is integral, then $C = \pi^*(C')$ and $C^2 = pC'^2$.
\item[$(2)$]   
If $C$ is not integral, then $pC = \pi^*(C')$ and $pC^2 = C'^2$.
\end{itemize}
\end{prop}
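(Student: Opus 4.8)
The plan is to reduce the whole statement to the determination of one multiplicity and then to identify that multiplicity with the ramification of $\pi$ along $C$. Since $\pi$ is purely inseparable of degree $p$, it is a universal homeomorphism, so $\pi^{-1}(C')$ is set-theoretically $C$ and hence $\pi^{*}(C')=mC$ for a unique integer $m\ge 1$. Writing $f=[k(C):k(C')]$ for the degree of the induced finite morphism $\pi|_{C}\colon C\to C'$, one has $\pi_{*}C=f\,C'$, and combining this with the pushforward--pullback relation $\pi_{*}\pi^{*}(C')=(\deg\pi)\,C'=p\,C'$ gives $mf=p$. As $p$ is prime, either $(m,f)=(1,p)$ or $(m,f)=(p,1)$. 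The two intersection formulas then follow formally from the projection formula $\la\pi^{*}(C'),\pi^{*}(C')\ra=p\,\la C',C'\ra$: substituting $\pi^{*}(C')=mC$ gives $m^{2}C^{2}=p\,C'^{2}$, which yields $C^{2}=pC'^{2}$ when $m=1$ and $pC^{2}=C'^{2}$ when $m=p$. Thus it only remains to show that $m=1$ exactly when $C$ is integral and $m=p$ exactly when it is not.

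To pin down $m$ I would localize at the generic point of $C$. Put $R=\calO_{S,C}$, a DVR with maximal ideal $\mathfrak{m}$, a uniformizer $\phi$ (a local equation of $C$), and residue field $k(C)$, and let $\tilde D=g_{i}\,\partial/\partial x_{i}+h_{i}\,\partial/\partial y_{i}$ be the reduced vector field, so that $D=f_{i}\tilde D$; since $f_{i}\in K=k(S)$ is a nonzero rational function, $\tilde D$ has the same kernel $K^{\tilde D}=K^{D}$ on $K$, and integrality of $C$ is by definition tangency of $\tilde D$, i.e. $\tilde D\phi\in\mathfrak{m}$. Because $S^{D}$ is normal, $R'=R\cap K^{\tilde D}=\calO_{S^{D},C'}$ is again a DVR, and the purely inseparable extension admits a unique valuation above $R'$, so its ramification index $e$ and residue degree $f$ satisfy $ef=p$; moreover $m=e$, since for a uniformizer $t$ of $R'$ one has $m=\mathrm{ord}_{C}(\pi^{*}t)=e$. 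Hence everything comes down to the equivalence $C\text{ integral}\iff e=1$.

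The main obstacle is precisely this last equivalence, and here the hypothesis that $(g_{i})$ and $(h_{i})$ have no common component is essential: it guarantees that $\tilde D$ does not vanish at the generic point of $C$. If $e=1$, then a uniformizer $t$ of $R'$ lies in $K^{\tilde D}$ and is also a uniformizer of $R$, so $\mathfrak{m}$ is $\tilde D$-stable and $C$ is integral. Conversely, if $C$ is integral then $\tilde D$ preserves $\mathfrak{m}$ and induces a derivation $\bar{\tilde D}$ on $k(C)$; the nonvanishing of $\tilde D$ along $C$ forces $\bar{\tilde D}\neq 0$, and since $k(C')=k(C)^{\bar{\tilde D}}$ this gives $f=[k(C):k(C')]=p$, hence $e=1$. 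Therefore $C$ integral $\iff e=1\iff m=1$, and the non-integral case gives $m=e=p$; together with the reductions above this proves both parts. The delicate step is exactly this local identification of ramification with tangency, in particular the use of the no-common-component hypothesis to guarantee $\bar{\tilde D}\neq 0$ and thereby exclude a totally ramified extension in the integral case.
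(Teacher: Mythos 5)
The paper gives no proof of this proposition: it is quoted from Rudakov--Shafarevich \cite[Proposition 1]{RS}, so there is nothing internal to compare your argument against. On its own terms your proof is correct and follows the standard route. The reduction to the single multiplicity $m$ in $\pi^{*}(C')=mC$ via $\pi_{*}\pi^{*}=(\deg\pi)\cdot\mathrm{id}$ and the projection formula is clean, and the localization at the generic point of $C$ correctly identifies $m$ with the ramification index $e$ and integrality with $e=1$; your emphasis on the no-common-component hypothesis as the reason the reduced field $\tilde D$ is nonzero modulo $\mathfrak{m}$, hence induces a nonzero derivation of $k(C)$, is exactly the right delicate point. Two steps you leave implicit do hold but deserve a word: the equality $[k(C):k(C)^{\bar{\tilde D}}]=p$ for a nonzero derivation is not automatic for arbitrary fields, but here $k(C)$ is the function field of a curve over an algebraically closed field, so $[k(C):k(C)^{p}]=p$ is prime and $k(C)^{p}\subseteq k(C)^{\bar{\tilde D}}\subsetneq k(C)$ forces $k(C)^{\bar{\tilde D}}=k(C)^{p}$; and the self-intersection $C'^{2}$ on the quotient must be interpreted on the normal surface $S^{D}$ (via Mumford's rational intersection theory, or by restricting to the divisorial case where $S^{D}$ is nonsingular, which is the only case the paper uses).
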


In any characteristic ${\rm char}(k)=p \geq 0$, an algebraic surface with numerically trivial
canonical divisor is called an Enriques surface if the second Betti
number is equal to 10. In case of $p=2$,
Enriques surfaces $X$ are divided into three classes
(for details, see Bombieri and Mumford \cite[Section 3]{BM2}):
\begin{itemize}
\item[$(1)$] $K_{X}$ is not linearly equivalent to zero 
and $2K_{X}\sim 0$.  Such an Enriques surface is called a classical Enriques surface.
\item[$(2)$] $K_{X} \sim 0$, ${\rm H}^{1}(X, {\calO}_{X}) \cong k$
and the Frobenius map acts on  ${\rm H}^{1}(X, {\calO}_X)$ bijectively.
Such an Enriques surface is called a singular Enriques surface.
\item[$(3)$] $K_{X} \sim 0$, ${\rm H}^{1}(X, {\calO}_{X}) \cong k$
and the Frobenius map is the zero map on  ${\rm H}^{1}(X, {\calO}_X)$.
Such an Enriques surface is called a supersingular Enriques surface.
\end{itemize}

It is known that the canonical cover of any singular Enriques surface is not supersingular.  Moreover
it is an ordinary $K3$ surface (e.g. Katsura and Kondo \cite[Theorem A.1]{KK2}).  
Recently Liedtke \cite{L} showed that 
the moduli space of
Enriques surfaces with a polarization of degree $4$ has two $10$-dimensional irreducible components.  A general point of one component (resp. the other component) corresponds to a singular (resp. classical) Enriques surface, and the intersection of the two components parametrizes supersingular Enriques surfaces.

Now assume that   
$X$ is a classical or supersingular Enriques surface and $\bar{\pi} : \bar{Y}\to X$ the canonical cover.
In this case there exists a regular 1-form $\eta$ on $X$.  A point $P\in \bar{Y}$ is a singular point if and only if $\eta$ vanishes at $\bar{\pi}(P)$ (Bombieri-Mumford \cite[p.221]{BM2}).  
Since $c_2(X)=12$, $\eta$ has 12 zeros generically.  Thus in case of classical or supersingular Enriques surfaces, they have always
a singularity.
We call the points of zeros of $\eta$ canonical points of $X$.
If $\bar{Y}$ has only rational double points, then the minimal resolution of singularities is a supersingular $K3$ surface, and 
it is a rational surface otherwise (Cossec and Dolgachev \cite[Theorem 1.3.1]{CD}).

We call a nonsingular rational curve on an Enriques surface or $K3$ surface a nodal curve.
If $C$ is a nodal curve, then $C^2=-2$. 

Let $X$ be a supersingular or classical Enriques surface.  Let $\bar{\pi} : \bar{Y} \to X$ be the canonical
cover.  Assume that $\bar{Y}$ has only rational double points.  Let $\rho : Y \to \bar{Y}$ be the minimal resolution. 

\begin{lemma}\label{nodal}{\rm (Ekedahl and Shepherd-Barron \cite[Definition-Lemma 0.8]{EkS})}
Let $E$ be a nodal curve on $X$ and denote by $\tilde{E}$ the irreducible curve on
$Y$ mapping surjectively to $E$.  Then 
$\tilde{E}$ is a nodal curve, the degree of the map $\bar{\pi} \circ \rho : \tilde{E}\to E$ is one, and
two points (including infinitely near points) on $E$ are blow-ups during the minimal resolution.
If two nodal curves $E_1$ and $E_2$ on $X$ meet transversally at one point, then their strict transforms do not meet on $Y$.
\end{lemma}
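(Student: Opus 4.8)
The plan is to transport everything to the smooth $K3$ surface $Y$ along the crepant minimal resolution $\rho$ and to control the purely inseparable cover by means of Proposition \ref{insep}. Realize $\bar{\pi}$ as the quotient of $\bar{Y}$ by the rank-one foliation $D$ attached to the degree-two morphism, so that, over the smooth locus of $\bar{Y}$, Proposition \ref{insep} governs the preimage of $E$. Write $f=\bar{\pi}\circ\rho\colon Y\to X$; it is purely inseparable of degree $2$. Since $\bar{\pi}$ is a homeomorphism, the reduced preimage $\bar{E}=(\bar{\pi}^{-1}(E))_{\mathrm{red}}$ is irreducible and $\tilde{E}$ is its strict transform. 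Throughout I would use three inputs: the projection formula $f^*(\,\cdot\,)\cdot(\,\cdot\,)=(\,\cdot\,)\cdot f_*(\,\cdot\,)$; the facts that $f_*C=0$ for every $\rho$-exceptional curve $C$ and $f_*\tilde{E}=\deg(\tilde{E}\to E)\,E$; and the $K3$ constraints $K_Y\sim 0$, negative-definiteness of the exceptional locus, and the parity $\Gamma^2\in 2\mathbb{Z}$ for every curve $\Gamma$ on $Y$.

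First I would settle the degree. By Proposition \ref{insep}, $\bar{E}$ is either integral or non-integral for $D$, and the integral case can be excluded on $Y$. If $\bar{E}$ were integral, then $\bar{E}=\bar{\pi}^*E$, the map $\bar{E}\to E$ has degree $2$, so $f_*\tilde{E}=2E$ and $f^*E=\rho^*\bar{E}=\tilde{E}+Z$ with $Z\ge 0$ supported on the exceptional locus. From $f^*E\cdot C=f_*C\cdot E=0$ for each exceptional $C$ one gets $Z\cdot C=-\tilde{E}\cdot C$, hence $\tilde{E}\cdot Z=-Z^2\ge 0$; intersecting with $\tilde{E}$ and using projection gives $\tilde{E}^2+\tilde{E}\cdot Z=f_*\tilde{E}\cdot E=2E^2=-4$, so $\tilde{E}^2\le -4$, contradicting $\tilde{E}^2\ge -2$ on $Y$. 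Therefore $\bar{E}$ is non-integral: $\bar{\pi}^*E=2\bar{E}$, the map $\tilde{E}\to E$ has degree one, and $f^*E=2\rho^*\bar{E}=2\tilde{E}+2Z$ with $2Z\ge 0$ integral and exceptional. This gives the degree assertion.

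The same bookkeeping would then pin down the self-intersection. As above $\tilde{E}\cdot Z=-Z^2\ge 0$, and the projection formula gives $\tilde{E}\cdot f^*E=f_*\tilde{E}\cdot E=E^2=-2$; expanding $\tilde{E}\cdot(2\tilde{E}+2Z)=-2$ yields $\tilde{E}^2+\tilde{E}\cdot Z=-1$, so $\tilde{E}^2=-1-\tilde{E}\cdot Z\le -1$. Since $\tilde{E}^2$ is even and at least $-2$ on $Y$, necessarily $\tilde{E}^2=-2$ and $\tilde{E}\cdot Z=1$. Then $p_a(\tilde{E})=0$, so $\tilde{E}$ is a smooth rational, i.e. nodal, curve, and $\tilde{E}\to E$ is a birational morphism of smooth rational curves, hence an isomorphism. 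The value $\tilde{E}\cdot Z=1$ measures the total drop in $\tilde{E}^2$ caused by $\rho$; decomposing it into the local contributions at the singular points of $\bar{Y}$ on $\bar{E}$ (a simple passage through a node contributing $\tfrac12$) shows that exactly two points, counted with infinitely near ones, are blown up, which is the remaining assertion for a single $E$.

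For the final statement let $E_1,E_2$ be nodal curves with $E_1\cdot E_2=1$ and write $f^*E_1=2\tilde{E}_1+2Z_1$. The projection formula gives $f^*E_1\cdot\tilde{E}_2=E_1\cdot f_*\tilde{E}_2=E_1\cdot E_2=1$, while the left side is $2\tilde{E}_1\cdot\tilde{E}_2+2Z_1\cdot\tilde{E}_2$. Since $Z_1$ is effective and supported on curves distinct from $\tilde{E}_2$, the integer $2Z_1\cdot\tilde{E}_2$ is non-negative; being equal to $1-2\tilde{E}_1\cdot\tilde{E}_2$ it is odd, and being $\ge 0$ it forces $2\tilde{E}_1\cdot\tilde{E}_2\le 1$, whence $\tilde{E}_1\cdot\tilde{E}_2=0$, the common point being a canonical point whose two branches $\rho$ separates. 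I expect the main obstacle to be the careful local analysis of $\bar{\pi}$ at the canonical points: justifying $\bar{\pi}^*E=2\bar{E}$ as divisors although the foliation degenerates precisely there, and translating $\tilde{E}\cdot Z=1$ into the stated count of (possibly infinitely near) blown-up points for each admissible singularity type $A_1$ and $D_4$. The global intersection-theoretic skeleton above is then forced by parity together with the negative-definiteness of the exceptional locus.
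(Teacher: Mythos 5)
The paper does not prove this lemma at all: it is quoted verbatim from Ekedahl--Shepherd-Barron \cite[Definition-Lemma 0.8]{EkS}, so there is no in-paper proof to compare against. Your reconstruction is essentially the standard argument from that source, and its skeleton is correct: writing $f^*E=2\tilde{E}+2Z$ (resp.\ $\tilde{E}+Z$ in the putative integral case) with $Z$ the effective Mumford correction, the projection formula together with $\tilde{E}\cdot Z=-Z^2\ge 0$, evenness of the K3 intersection form, and $\Gamma^2\ge -2$ for irreducible curves forces $\deg(\tilde{E}\to E)=1$, $\tilde{E}^2=-2$ and $\tilde{E}\cdot Z=1$; and the parity trick $2\tilde{E}_1\cdot\tilde{E}_2+2Z_1\cdot\tilde{E}_2=1$ cleanly gives $\tilde{E}_1\cdot\tilde{E}_2=0$. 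All of that is sound.

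The one step you have not actually carried out is the translation of $\tilde{E}\cdot Z=1$ into ``exactly two points, including infinitely near ones, of $E$ are blown up,'' and you correctly identify it as the remaining obstacle. To close it you need the local dictionary at each admissible singularity: at an $A_1$ point the coefficient of the exceptional curve in $Z$ is $\tfrac12$, so a transversal passage contributes $\tfrac12$ and one blow-up centre on $E$ (two such points in total); at a $D_4$ point the diagonal entries of the inverse Cartan matrix force $\tilde{E}$ to meet an \emph{outer} curve once (the central curve would contribute $2>1$), giving contribution $1$ and, after tracing the two-stage resolution of $D_4$, exactly two infinitely near blow-up centres on $E$. One should also note that $\tilde{E}$ cannot meet a single $A_1$ exceptional curve twice, since $\bar{E}\to E$ is an isomorphism onto the smooth curve $E$ and hence $\bar{E}$ is unibranch. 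This is routine but is genuinely needed, since the paper applies the ``two points'' count at the $D_4$ canonical point in the type $\rm MII$ analysis (e.g.\ Example \ref{MIIfibrationExample}(4) and Lemma \ref{possiblebisection13}).
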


Here we recall the theory of supersingular $K3$ surfaces (Artin \cite{A}).  In any algebraically closed field $k$ 
in characteristic $p >0$,
a $K3$ surface $Y$ defined over $k$ is called supersingular if the Picard number of $Y$ is 22.
Let $Y$ be a supersingular $K3$ surface.  Denote by ${\rm NS}(Y)$ the N\'eron-Severi group of $Y$ and
by ${\rm NS}(Y)^*$ the dual of ${\rm NS}(Y)$.
Then ${\rm NS}(Y)$ is an even lattice of signature $(1,21)$ such that ${\rm NS}(Y)^*/{\rm NS}(Y)$ is isomorphic to a $p$-elementary abelian group $({\bf Z}/p{\bf Z})^{2\sigma}$ where $\sigma$ is called the Artin invariant of $Y$ and satisfies $1\leq \sigma \leq 10$.   The supersingular $K3$ surfaces with the Artin invariant $\sigma$ form a $(\sigma -1)$-dimensional family.  Moreover supersingular $K3$ surfaces with $\sigma =1$ are unique up to isomorphisms (Ogus \cite[Cor. 7.14]{Og} for $p > 2$, Rudakov-Shafarevich \cite[\S 4]{RS2} for $p=2$).  A concrete example of the supersingular $K3$ 
surface in characteristic 2 with Artin invariant 1 is given as follows (see Dolgachev-Kondo \cite{DK}):
Let ${\bf P}^2({\bf F}_4)$ be the projective plane over the finite field ${\bf F}_4$.
It contains 21 points and 21 lines, and each line contains five points and each point is contained in
five lines.  Let $Z$ be the inseparable double cover of ${\bf P}^2$ defined by
$$t^2 = x^4yz + y^4xz + z^4xy$$
where $(x,y,z)$ are homogeneous coordinates of ${\bf P}^2$.  The partial derivatives of this equation are
$$y^4z+z^4y, \quad x^4z+z^4x, \quad x^4y+ y^4x$$
all of which vanish exactly at  21 ${\bf F}_4$-rational points of ${\bf P}^2$.  Thus $Z$ has 21 rational double points of type $A_1$.  Let $Y$ be the minimal resolution of $Z$ which is a $K3$ surface. 
Obviously $Y$ contains the disjoint union of the 21 nodal
curves which are exceptional curves of the resolution.  On the other hand, the pullbacks of the 21 lines in ${\bf P}^2({\bf F}_4)$ are 21 disjoint nodal curves.
Thus we have two sets $\calA$ and $\calB$ of disjoint 21 nodal curves such that each member in one set meets exactly five members in the other set at one point transversally.   These 42 nodal curves generate 
the N\'eron-Severi lattice ${\rm NS}(Y)$ which has rank 22 and discriminant $-2^2$.  Thus $Y$ is the supersingular
$K3$ surface with the Artin invariant 1.

Now we recall some facts on elliptic fibrations on Enriques surfaces and the supersingular $K3$ surface
with the Artin invariant 1.

\begin{prop}\label{multi-fiber}{\rm (Cossec and Dolgachev \cite[Theorems 5.7.5,\ 5.7.6]{CD})}

Let $f : X \to {\bf P}^1$ be an elliptic fibration on an Enriques surface $X$ in 
characteristic $2$.  Then the following hold.

\begin{itemize}
\item[$(1)$]  If $X$ is classical, then $f$ has two tame multiple fibers with multiplicity $2$, each is
either an ordinary elliptic curve or a singular fiber of additive type.
\item[$(2)$]  If $X$ is singular, then $f$ has one wild multiple  
fiber with multiplicity $2$ which is an ordinary elliptic curve or a singular fiber of multiplicative type.
\item[$(3)$]  If $X$ is supersingular, then $f$ has one wild multiple fiber with multiplicity $2$ 
which is a
supersingular elliptic curve or a singular fiber of additive type.
\end{itemize}
\end{prop}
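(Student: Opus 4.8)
The plan is to combine the canonical bundle formula for elliptic surfaces with the classification of the Picard scheme $\Pic^\tau_X$ of an Enriques surface in characteristic $2$. First I would note that, since $f_*\calO_X=\calO_{\mathbf{P}^1}$ and $\chi(\calO_X)=1$ in all three cases, the sheaf $R^1f_*\calO_X$ decomposes as $L^{-1}\oplus T$ with $L$ invertible of degree $\deg L=\chi(\calO_X)=1$ (using $\chi$ of a genus-one fiber $=0$) and $T$ a torsion sheaf supported at the images of the wild fibers. The relative canonical bundle formula (Bombieri--Mumford) then reads
\[
K_X \sim f^*\bigl(K_{\mathbf{P}^1}\otimes L\bigr)+\sum_i a_i P_i,
\]
where the $m_iP_i$ are the multiple fibers and $m_i-1\le a_i\le m_i-1+\mathrm{length}\,T_{f(P_i)}$, with equality on the left precisely when the fiber is tame (i.e. $T$ vanishes there).

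Since $K_X$ is numerically trivial while $\deg(K_{\mathbf{P}^1}\otimes L)=-2+1=-1$ and $P_i\equiv F/m_i$ for the fiber class $F$, intersecting with $F$ forces $\sum_i a_i/m_i=1$; for an Enriques surface the relevant multiplicities are all equal to $2$, so $\Pic^\tau_X$ being of order two is exactly what makes this a half-fiber condition. I would then use the Leray identification $H^1(X,\calO_X)\cong H^0(\mathbf{P}^1,R^1f_*\calO_X)=H^0(T)$ (the locally free part $L^{-1}=\calO(-1)$ has no sections) to read off the count. When $X$ is classical, $H^1(\calO_X)=0$, hence $T=0$, every fiber is tame with $a_i=1$, and $\sum a_i/m_i=1$ forces exactly two half-fibers. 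When $H^1(\calO_X)\cong k$ (singular or supersingular), $\mathrm{length}\,T=1$, producing a single wild double fiber with $a=2$ and $a/m=1$; this alone exhausts the budget $\sum a_i/m_i=1$, so there can be no further multiple fiber. This already separates (1) from (2) and (3).

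The finer, harder step is to determine the Kodaira type of each (half-)fiber, and this is where the three group schemes of order two in characteristic $2$ enter: $\Pic^\tau_X$ is $\bbZ/2$ (classical), $\mu_2$ (singular), or $\alpha_2$ (supersingular). The normal bundle $\calO_{P_i}(P_i)$ is a nontrivial $2$-torsion element of $\Pic(P_i)$ whose group-scheme type must agree with $\Pic^\tau_X$, so I would compute $\Pic^0(P_i)$ for each type: an ordinary elliptic curve carries both étale and multiplicative $2$-torsion ($\bbZ/2\times\mu_2$); a supersingular elliptic curve carries only local--local ($\alpha_2$-type) torsion; a multiplicative fiber has $\Pic^0=\bbG_m$ with torsion $\mu_2$; and an additive fiber has connected part $\bbG_a$ supplying infinitesimal $\alpha_2$-torsion together with an étale component group supplying $\bbZ/2$-torsion. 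Matching these against $\Pic^\tau_X$ yields $\bbZ/2\Rightarrow$ ordinary elliptic or additive, $\mu_2\Rightarrow$ ordinary elliptic or multiplicative, and $\alpha_2\Rightarrow$ supersingular elliptic or additive, which are exactly the lists in (1), (2), (3).

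I expect the main obstacle to be making this matching rigorous, namely computing $H^1(\calO_{2P})$ and the Frobenius action on it for each Kodaira type so as to decide \emph{simultaneously} whether the double fiber is tame or wild and which group scheme its torsion realizes. Concretely, the singular/supersingular dichotomy should be pinned down by comparing the Frobenius action on $H^1(X,\calO_X)$ with its action on the length-one stalk $H^0(T)$ contributed by the wild fiber: Frobenius is bijective exactly for the $\mu_2$-type (multiplicative or ordinary-elliptic) fiber, forcing $X$ singular, and zero for the $\alpha_2$-type (additive or supersingular-elliptic) fiber, forcing $X$ supersingular. The remaining care is to verify that a multiplicative half-fiber is necessarily wild, so that it is excluded in the classical case, and that additive fibers genuinely furnish both the étale $\bbZ/2$-torsion needed classically and the infinitesimal $\alpha_2$-torsion needed supersingularly; this is the point at which the detailed structure of the Néron model and its component group in characteristic $2$ must be invoked.
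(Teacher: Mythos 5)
This proposition is quoted in the paper from Cossec--Dolgachev \cite[Theorems 5.7.5, 5.7.6]{CD}; the paper itself contains no proof, so your proposal can only be compared with the standard argument in that source. Your strategy is in fact the same as theirs: the Bombieri--Mumford canonical bundle formula, the Leray identification $H^1(X,\calO_X)\cong H^0(T)$ to count wild fibers, and the matching of the $2$-torsion group scheme carried by the half-fiber against $\Pic^\tau_X\in\{\bbZ/2,\mu_2,\alpha_2\}$. The group-scheme bookkeeping in your third paragraph is correct and is exactly what produces the three lists.

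There is, however, one concrete error in the numerical accounting. In the Bombieri--Mumford formula the coefficients satisfy $0\le a_i<m_i$, so a double fiber can never have $a=2$; your claim that the wild fiber ``exhausts the budget $\sum a_i/m_i=1$ with $a=2$'' is not admissible. The correct mechanism is that the degree of the invertible part of $R^1f_*\calO_X$ shifts by $\mathrm{length}\,T$: one has $\deg L=\chi(\calO_X)+\mathrm{length}\,T$, so when $T\ne 0$ the budget becomes $\sum a_i/m_i=0$, forcing every $a_i=0$. Since a tame multiple fiber has $a_i=m_i-1\ge 1$, this excludes tame multiple fibers altogether in the singular and supersingular cases, and the unique point of $\mathrm{Supp}(T)$ carries the unique (wild) multiple fiber. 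The conclusion you want survives, but via a different balance of the ledger than the one you wrote. Beyond this, the points you yourself flag as the ``main obstacle''---proving that a multiplicative half-fiber is necessarily wild, that the multiplicity is $2$, and that the Frobenius action on the length-one stalk of $T$ distinguishes the singular from the supersingular case---are precisely the technical core of Cossec--Dolgachev's proof and are not yet carried out in your sketch; as written, the proposal is a correct plan with an arithmetic slip rather than a complete proof.
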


We use Kodaira's notation for singular fibers of an elliptic fibration: 
$${\rm I}_n,\ {\rm I}_n^*,\ {\rm II},\ {\rm II}^*,\ {\rm III},\ {\rm III}^*,\ {\rm IV},\  {\rm IV}^*.$$
If an elliptic fibration on $X$ has a multiple fiber, for example, of type ${\rm III}$, 
then we call it a fiber of type $2{\rm III}$.

Let $X$ be an Enriques surface and $f:X\to {\bf P}^1$ an elliptic fibration. 
Since $f$ has a multiple fiber, it has no sections.  If $f$ admits a bi-section $s$ isomorphic to
a nodal curve, then $f$ is called special and $s$ is called a special bi-section.
%It is known that if $S$ contains a nodal curve, then there exists a special genus one fibration.
The following result is due to Cossec \cite{Cossec} in which he assumed the characteristic $p\not=2$, but the assertion for $p=2$ holds, too.

\begin{prop}\label{Cossec}{\rm (Lang \cite[$\II$, Theorem A3]{Lang})}
Assume that an Enriques surface $X$ contains a nodal curve.  
Then there exists a special genus one fibration on $X$.
\end{prop}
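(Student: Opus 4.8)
The plan is to translate the statement into the lattice ${\rm Num}(X)$, which for an Enriques surface is the even unimodular lattice $U\oplus E_8$ of signature $(1,9)$, and to recall that a genus one fibration on $X$ is the same datum as a nef primitive isotropic class $F\in{\rm Num}(X)$: the pencil is $|2F|$ and $F$ is the class of a half-fiber (Cossec--Dolgachev \cite{CD}; the passage from such an $F$ to an actual fibration persists in characteristic $2$, the multiple fibers being described by Proposition \ref{multi-fiber}). In this language the fibration is \emph{special} exactly when there is a nodal curve $R$ with $R\cdot F=1$, because then $R\cdot 2F=2$ exhibits $R$ as a bisection, and $R$ being a $(-2)$-curve makes it a special bisection. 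Thus the whole problem reduces to producing a pair $(F,R)$ with $F$ nef primitive isotropic, $R$ a nodal curve, and $R\cdot F=1$.

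First I would settle the purely lattice-theoretic existence. Let $R_0$ be the given nodal curve and $r_0=[R_0]$, so $r_0^2=-2$ and $r_0$ is effective; since $r_0^2=-2$ is not divisible by a nontrivial square, $r_0$ is primitive, and because the lattice is unimodular the functional $f\mapsto r_0\cdot f$ is surjective onto ${\bf Z}$. Hence the classes with $r_0\cdot f=1$ form a coset $f_1+r_0^\perp$, and finding an isotropic one amounts to solving $x^2+2f_1\cdot x+f_1^2=0$ with $x\in r_0^\perp$. As $r_0^\perp$ is indefinite of rank $9$, the abundance of isotropic vectors in such a lattice (no local obstruction) makes this affine equation solvable over ${\bf Z}$, so there is a primitive isotropic $F_0$ with $r_0\cdot F_0=1$ (primitivity being automatic from $r_0\cdot F_0=1$), which after replacing $F_0$ by $-F_0$ I may take in the positive cone.

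The hard part is the passage from this numerical class to an honest geometric pair, i.e. making $F_0$ nef while retaining a genuine nodal curve meeting it once. The naive move of pushing $F_0$ into the nef cone by the Weyl group $W$ generated by reflections in nodal curves does not by itself finish, because $W$ does not preserve effectivity and, on an Enriques surface, a $(-2)$-class need be neither effective nor anti-effective (one has $\chi(\calO_X(D))=0$ when $D^2=-2$, in contrast to the $K3$ case). I would therefore argue by minimization: among all nef primitive isotropic $F$ with $R_0\cdot F>0$ — a nonempty set, since $r_0^\perp$ is a proper sublattice and $X$ carries genus one fibrations at all (Cossec--Dolgachev \cite{CD}) — choose one with $m:=R_0\cdot F$ minimal, and claim $m=1$. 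If $m\geq 2$, then $R_0$ is a multisection of $|2F|$ of degree $2m$, and I would use the geometry of $|2F|$ together with the structure of the nef cone of $X$ (a fundamental domain for $W$, a Vinberg-type polyhedron in the hyperbolic lattice $U\oplus E_8$ whose walls correspond to nodal curves) to produce, from a reducible fiber or from $R_0$ itself via a reflection landing on a wall, a new nef isotropic class or another nodal curve with strictly smaller intersection against the relevant half-fiber, contradicting minimality.

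The main obstacle is precisely this descent: controlling effectivity and nef-ness simultaneously and guaranteeing that the minimal intersection number is exactly $1$ rather than some larger value. This is where the detailed combinatorics of the lattice $E_{10}=U\oplus E_8$ and its reflection group are needed, and where the characteristic $2$ features must be checked — namely that the existence of the pencil from a nef isotropic class and the behaviour of its possibly wild multiple fibers, governed by Proposition \ref{multi-fiber}, do not disturb the identification of $R$ as a special bisection. Once $m=1$ is established, the associated nodal curve is a special bisection of $|2F|$, and the desired special genus one fibration exists.
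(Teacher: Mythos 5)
The paper offers no proof of this proposition at all: it is quoted as a known theorem of Cossec, with Lang \cite[II, Theorem A3]{Lang} cited for the characteristic~$2$ case. So the only question is whether your argument stands on its own, and it does not: there is a genuine gap at exactly the point you flag as ``the main obstacle.'' Your reduction of the statement to producing a pair $(F,R)$ with $F$ nef primitive isotropic, $R$ a nodal curve and $R\cdot F=1$ is correct, and the purely lattice-theoretic step (existence of a primitive isotropic class $F_0$ with $r_0\cdot F_0=1$ in $U\oplus E_8$) is fine. But the passage from the numerical class to a nef one is the entire content of the theorem, and your proposal replaces it with a declaration of intent: ``I would use the geometry of $|2F|$ together with the structure of the nef cone \dots to produce \dots a new nef isotropic class or another nodal curve with strictly smaller intersection.'' No mechanism for this descent is given, and none is obvious. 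Applying a reflection $s_{R'}$ in a nodal curve $R'$ to $F$ can change $R_0\cdot F$ in either direction; the reflected class need not stay nef nor correspond to an effective half-fiber; and a class produced abstractly from fiber components need not be represented by an irreducible rational curve. Controlling all three of these simultaneously is precisely what Cossec's and Lang's proofs do, via the non-degeneracy invariant and a careful analysis of isotropic sequences, and it occupies several pages in \cite{CD} and \cite{Lang}. Asserting that minimality of $m=R_0\cdot F$ forces $m=1$ without exhibiting the contradiction when $m\geq 2$ is not a proof.

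Two smaller points. First, the nonemptiness of your minimizing set (a nef primitive isotropic $F$ with $R_0\cdot F>0$) is also asserted rather than proved; if $R_0\cdot F=0$ for the first fibration you find, $R_0$ is a fiber component and you must produce a second fibration not containing it, which again needs an argument. Second, since the statement is for \emph{genus one} fibrations (including quasi-elliptic ones in characteristic $2$), you should say explicitly that the pencil $|2F|$ attached to a nef effective primitive isotropic $F$ is base-point free and composed with a pencil; this is in \cite{CD} but is part of the chain you are invoking. Given that the paper itself treats this proposition as an external citation, the honest options are either to cite Lang as the paper does, or to actually carry out the descent; the proposal as written does neither.
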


Elliptic fibrations (genus one fibrations more generally) on the supersingular $K3$ surfaces with Artin invariant 1 have been classified (Kondo and Shimada \cite{KS}, Elkies and Sch\"utt \cite{ES}). Moreover Elkies and Sch\"utt proved the following theorem.

\begin{theorem}\label{ElkiesSchutt}{\rm (Elkies and Sch\"utt \cite[Theorem 1, Theorem 2, Proposition 9]{ES})}
Let $Y$ be the supersingular $K3$ surface with Artin invariant $1$ over an algebraically closed field $k$
in characteristic $2$.  Then $Y$ admits exactly $18$ genus $1$ fibrations.  More precisely, for each genus $1$ fibration, there is exactly one model over $k$ up to isomorphisms.  Moreover any genus $1$ fibration has a section.
\end{theorem}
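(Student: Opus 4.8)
The plan is to reduce the statement to a problem in lattice theory and then carry out an explicit enumeration. By the crystalline Torelli theorem for supersingular $K3$ surfaces (Ogus, and Rudakov--Shafarevich in characteristic $2$), automorphisms of $Y$ are detected by their action on $\NS(Y)$, and the nef cone of $Y$ is cut out inside the positive cone by the hyperplanes orthogonal to the classes of nodal curves. Under this dictionary, genus $1$ fibrations on $Y$ up to isomorphism correspond bijectively to $\Aut(Y)$-orbits of primitive nef isotropic classes $f \in \NS(Y)$ (the class of a fiber). So the first step is to fix the lattice $L := \NS(Y)$, which by the explicit model with the two families $\calA, \calB$ of $21$ disjoint nodal curves is the even lattice of signature $(1,21)$ with $L^*/L \cong (\bbZ/2\bbZ)^2$, uniquely determined in its genus.

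Next, for a fibration that admits a section, choosing the fiber $f$ together with a section gives a primitive embedding $U \hookrightarrow L$, and the orthogonal complement $W := U^\perp$ is an even negative-definite lattice of rank $20$ with $W^*/W \cong (\bbZ/2\bbZ)^2$. By the Shioda--Tate formula the root sublattice $W_{\mathrm{root}}$ records the reducible fibers (its ADE type) and $\rank W - \rank W_{\mathrm{root}}$ is the Mordell--Weil rank. The core of the argument is therefore to enumerate the isomorphism classes of lattices in the genus of $W$: I would run Kneser's neighbor method starting from one explicit frame lattice read off from the given Weierstrass model, controlling completeness by the mass formula. I expect this to produce exactly the $18$ frame types, each realizing one combinatorial configuration of singular fibers and Mordell--Weil group.

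It then remains to pass from frame lattices back to fibrations on $Y$. First I would show every genus $1$ fibration has a section. For a quasi-elliptic fibration this is automatic, since the cusp of the generic fiber gives a rational point; for a genuinely elliptic fibration it amounts to showing the fiber class $f$ has divisibility one in $L$, i.e. extends to a copy of $U$, which I would verify from the discriminant form $(\bbZ/2\bbZ)^2$, leaving no room for a nontrivial Tate--Shafarevich obstruction on the nef isotropic classes that occur. Finally, for uniqueness of the model I would use Nikulin's theory of primitive embeddings: for each frame type the embedding $U \hookrightarrow L$ is unique up to $\O(L)$, and since $\O(L)$ is generated by $(-2)$-reflections together with automorphisms of $Y$ (again by crystalline Torelli), the corresponding isotropic classes form a single $\Aut(Y)$-orbit, giving exactly one model over $k$.

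The hard part will be twofold. The enumeration of the genus of $W$ must be proven complete, not merely exhibited: bounding the class number via the mass formula and the neighbor graph is delicate in rank $20$, and one must separate which lattices actually occur as frames on this particular $Y$ rather than on some other supersingular $K3$. The second difficulty is the characteristic-$2$ geometry intervening between the lattice picture and the surface: controlling wild ramification, distinguishing elliptic from quasi-elliptic fibers with the same lattice type, and ensuring that every predicted $\Aut(Y)$-orbit is genuinely realized by an honest fibration. The explicit model with the $\bbF_4$-configuration of the $42$ nodal curves, whose symmetries realize a large subgroup of $\O(L)$, is what makes both the orbit count and the realization tractable.
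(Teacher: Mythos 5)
First, a point of comparison: the paper contains no proof of this statement. It is imported wholesale from Elkies--Sch\"utt \cite{ES} (their Theorem 1, Theorem 2 and Proposition 9), and Kond\=o uses it as a black box. So the only meaningful comparison is with the proof in the cited reference. Your general framework --- genus $1$ fibrations correspond to primitive nef isotropic classes, a fibration with section gives a copy of $U$ whose orthogonal complement is a rank-$20$ ``frame'' lattice, and the classification reduces to enumerating such frames --- is indeed the skeleton of the Kond\=o--Shimada/Elkies--Sch\"utt classification. In practice they do not run the neighbor method on the rank-$20$ genus with a mass-formula completeness check; they use the Kneser--Nishiyama trick, which converts the problem into classifying primitive embeddings of the rank-$4$ partner lattice $D_4$ into the $24$ Niemeier lattices, a finite combinatorial enumeration that sidesteps the completeness issue you flag as ``delicate.'' That substitution would repair the first of your two announced difficulties.

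The genuine gaps are in the second half. (1) Your uniqueness argument rests entirely on the crystalline Torelli theorem in characteristic $2$, which is exactly what is \emph{not} available: Ogus's theorem requires $p\geq 3$, and Elkies--Sch\"utt are explicit that they avoid Torelli-type arguments. Their ``one model over $k$'' statement is proved by exhibiting explicit Weierstrass equations over $\bbF_4$ for each of the $18$ fibrations and verifying uniqueness case by case through the arithmetic of the fibration (Mordell--Weil lattices, heights, point counts), not by showing the relevant isotropic classes form a single ${\rm Aut}(Y)$-orbit via a period argument. Without Torelli, your step ``${\rm O}(L)$ is generated by $(-2)$-reflections together with automorphisms of $Y$'' needs the explicit Dolgachev--Kond\=o generators plus a separate argument, and even then it only identifies fiber classes up to automorphism; it does not by itself rule out two non-isomorphic elliptic surface structures realizing the same frame. (2) Your claim that a quasi-elliptic fibration automatically has a section ``since the cusp of the generic fiber gives a rational point'' is false: the cusp of the generic fiber is a purely inseparable point of degree $2$ over $k(t)$, so the curve of cusps is an inseparable bi-section, not a section. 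The existence of sections for the quasi-elliptic fibrations here is part of what \cite[Proposition 9]{ES} actually proves, by other means. (3) The reduction of ``has a section'' to ``the fiber class has divisibility $1$'' is legitimate on a $K3$ surface, but the verification that divisibility $2$ does not occur for the nef isotropic classes of this particular $2$-elementary lattice is asserted rather than argued. So the proposal identifies the right lattice-theoretic scaffolding but would not compile into a proof in characteristic $2$ without replacing the Torelli input by the explicit equation-by-equation analysis that constitutes the bulk of \cite{ES}.
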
 

\noindent
It is enough to consider only elliptic fibrations in our situation by the following Proposition.

\begin{prop}{\rm (Cossec and Dolgachev \cite{CD}, Proposition 5.7.3)}
Let $X$ be an Enriques surface.  Assume that its canonical cover has only rational double points.
Then $X$ does not admit quasi-elliptic fibrations.
\end{prop}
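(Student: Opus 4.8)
The plan is to argue by contradiction, exploiting the regular $1$-form attached to the canonical cover together with the special geometry of cusps in characteristic $2$. So suppose that $X$ admits a quasi-elliptic fibration $f:X\to {\bf P}^1$. A singular Enriques surface has an ordinary $K3$ surface as its canonical cover, and a quasi-elliptic fibration on $X$ would pull back (along the purely inseparable cover) to a quasi-elliptic fibration on that ordinary $K3$ surface, which is impossible; so I may assume $X$ is classical or supersingular. Then $X$ carries a nonzero regular $1$-form $\eta$, and by Bombieri--Mumford the zero locus of $\eta$ is exactly the image under $\bar\pi$ of the singular locus of $\bar Y$. Under the hypothesis that $\bar Y$ has only rational double points, this singular locus is finite, so $\eta$ vanishes only at finitely many points of $X$. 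The goal is to contradict this by producing a whole curve along which $\eta$ vanishes.

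The key step is to show that $\eta$ is horizontal for $f$, that is, that its restriction to a general fibre carries no component tangent to the fibre. A quasi-elliptic fibration has no smooth fibres: a general closed fibre $F$ is a cuspidal cubic with normalization $\nu:{\bf P}^1\to F$. Restricting $\eta$ and projecting to $\Omega^1_F$ produces a section $\bar\eta_F\in H^0(F,\Omega^1_F)$; pulling back by $\nu$ and mapping to $\Omega^1_{{\bf P}^1}$ lands in $H^0({\bf P}^1,\Omega^1_{{\bf P}^1})=0$, so $\bar\eta_F$ must be supported on the torsion of $\Omega^1_F$, which is concentrated over the cusp. Hence $\bar\eta_F$ vanishes on the smooth locus of $F$. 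As the general fibres sweep out a dense open subset of $X$, the tangential component of $\eta$ vanishes identically, and therefore $\eta=a\,f^*(dt)$ for a rational function $a$ and a local coordinate $t$ on the base. Note this argument is uniform in the classical and supersingular cases, whereas the naive shortcut through $H^0(\Omega^2_X)$ fails for supersingular $X$.

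Horizontality means that the fibres of $f$ are integral curves of the $1$-foliation $\ker\eta$ associated with the canonical cover $\bar\pi:\bar Y\to X$. The cusps of the fibres trace out the curve of cusps $C\subset X$, which is one-dimensional. A cuspidal fibre is singular at its cusp, so $\ker\eta$ cannot be a subbundle there; equivalently $\eta$ must vanish at every cusp. Consequently $\eta$ vanishes along the whole curve $C$, and so $\bar Y$ is singular along the one-dimensional set $\bar\pi^{-1}(C)$. This contradicts the hypothesis that $\bar Y$ has only rational double points, which are isolated singularities; indeed, by the cited theorem of Cossec--Dolgachev such a cover resolves to a rational surface rather than to a $K3$ surface. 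Hence $X$ admits no quasi-elliptic fibration.

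I expect the main obstacle to be the horizontality step: one must control the restriction of the global $1$-form to a cuspidal fibre and rule out a nonzero tangential part, which is precisely where the characteristic-$2$ cuspidal geometry enters, and some care is needed to pass from the general closed fibre to a statement valid on a dense open subset of $X$ and finally to the curve of cusps itself.
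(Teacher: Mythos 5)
The paper does not prove this proposition at all---it is quoted from Cossec--Dolgachev---so your argument has to stand on its own. Your reduction to the classical/supersingular case is fine (apart from the slip that the canonical cover of a \emph{singular} Enriques surface is \'etale rather than purely inseparable; the pullback of a quasi-elliptic fibration along an \'etale cover is still quasi-elliptic, so the reduction survives), and your horizontality step is correct: the restriction of $\eta$ to a general fibre is torsion in $\Omega^1_F$, so $\eta\wedge f^*dt=0$. The genuine gap is the next step, ``a cuspidal fibre is singular at its cusp, so $\ker\eta$ cannot be a subbundle there; equivalently $\eta$ must vanish at every cusp.'' This is false as a local statement. Take $f=y^2+x^3$ on $\mathbf{A}^2$ in characteristic $2$ and $\eta=dx$: the fibres $y^2+x^3=t$ are cuspidal precisely along $\{x=0\}$, they are integral curves of $\ker(dx)=\langle \partial/\partial y\rangle$ since $\partial_y(y^2+x^3+t)=0$, and yet $dx$ vanishes nowhere, in particular not at the cusps. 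So a leaf of $\ker\eta$ may perfectly well acquire a cusp at a point where $\eta\neq 0$, and the contradiction you want does not follow from local considerations; the vanishing of $\eta$ along the curve of cusps is exactly the point that still needs proof.

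The step can be repaired, but only by a global argument. Since $\eta$ has no divisorial zeros, $\mathcal{O}_X\cdot\eta$ is a saturated rank-one subsheaf of $\Omega^1_X$ isomorphic to $\mathcal{O}_X$, and horizontality forces $f^*dt=g\,\eta$ for a rational function $g$ on $X$, so that $\mathrm{div}(f^*dt)=\mathrm{div}(g)$ is principal. Writing $\mathrm{div}(f^*dt)=\mathrm{div}_0(f^*dt)-2F_\infty$ with $F_\infty=f^{-1}(\infty)$, and intersecting with the fibre class $F$, we get $\mathrm{div}_0(f^*dt)\cdot F=0$. On the other hand $f^*dt$ vanishes wherever the (reduced) fibre is singular, so $\mathrm{div}_0(f^*dt)$ contains the curve of cusps $C$, which is horizontal; since every other component of $\mathrm{div}_0(f^*dt)$ meets $F$ nonnegatively, $\mathrm{div}_0(f^*dt)\cdot F\geq C\cdot F>0$, a contradiction. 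This intersection-theoretic count (or an equivalent $c_2$ computation) is the missing ingredient; with it in place the rest of your outline goes through.
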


Among 18 genus 1 fibrations, there are 8 elliptic fibrations.
The following is the list of elliptic fibrations.

\begin{theorem}\label{ElkiesSchutt2}{\rm (Elkies and Sch\"utt \cite[Theorem 1]{ES}, Kondo and Shimada \cite[Theorem 4.7]{KS})}
There are exactly the following eight types of singular fibers of elliptic fibrations on $Y$.
$$({\rm I}_6, {\rm I}_6, {\rm I}_6, {\rm I}_6), \ ({\rm I}_8, {\rm I}_8, {\rm I}_1^*), \ ({\rm I}_{10}, {\rm I}_{10}, {\rm I}_2, {\rm I}_2),\ ({\rm I}_{12}, {\rm I}_3^*),\ $$
$$({\rm I}_{12}, {\rm I}_4, {\rm IV}^*),\ ({\rm IV}^*, {\rm IV}^*, {\rm IV}^*),\ ({\rm I}_{16}, {\rm I}_1^*), \ ({\rm I}_{18}, {\rm I}_2, {\rm I}_2, {\rm I}_2).$$
\end{theorem}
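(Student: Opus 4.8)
The plan is to take the eighteen genus $1$ fibrations supplied by Theorem \ref{ElkiesSchutt} as the starting point, to read off the singular fibres of each by a computation in ${\rm NS}(Y)$, and only then to separate the elliptic fibrations from the quasi-elliptic ones. First I would fix a genus $1$ fibration $f\colon Y\to{\bf P}^1$; since $f$ has a section, a general fibre $F$ and a section span a copy of the hyperbolic plane $U\subset{\rm NS}(Y)$, and I set $W=U^{\perp}$, the \emph{frame lattice}. As $U$ is unimodular and ${\rm NS}(Y)^{*}/{\rm NS}(Y)\cong({\bf Z}/2{\bf Z})^{2}$, the lattice $W$ is even, negative definite of rank $20$, with discriminant group $({\bf Z}/2{\bf Z})^{2}$, and it is fixed once and for all by the uniqueness of $Y$. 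The classes of the fibre components generate a root sublattice $R=\bigoplus_{v}R_{v}\subseteq W$ of type $ADE$; a short argument using Riemann--Roch (every $(-2)$-class of $W$ is, up to sign, effective and vertical) shows that $R$ is exactly the sublattice of $W$ generated by its $(-2)$-vectors, and the Shioda--Tate formula gives
\[
{\rm rank}\,R+{\rm rank}\,{\rm MW}(f)=20 .
\]

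The first main step is then to enumerate the root sublattices $R$ that occur, up to the action of the Weyl group and of ${\rm O}({\rm NS}(Y))$. I would carry this out by classifying the orbits of primitive isotropic vectors of ${\rm NS}(Y)$ --- equivalently the possible frames $W$ --- and computing for each the $ADE$ type of $R$; by Theorem \ref{ElkiesSchutt} there are precisely eighteen orbits. The dictionary $A_{n-1}\leftrightarrow{\rm I}_{n}$, $D_{n+4}\leftrightarrow{\rm I}_{n}^{*}$, $E_{6}\leftrightarrow{\rm IV}^{*}$, $E_{7}\leftrightarrow{\rm III}^{*}$, $E_{8}\leftrightarrow{\rm II}^{*}$ (with the additive alternatives $A_{1}\leftrightarrow{\rm III}$, $A_{2}\leftrightarrow{\rm IV}$) turns each root type into a list of reducible fibres, and the list of singular fibres is completed by controlling the Euler number $c_{2}(Y)=24$, where in characteristic $2$ one must account for the wild ramification (Swan conductor) concentrated at the additive fibres. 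Because the root type alone cannot tell a multiplicative fibre ${\rm I}_{n}$ from an additive fibre of the same $ADE$ type, I would pin down each summand using the explicit Weierstrass models of Elkies and Sch\"utt.

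The final step is to single out the elliptic fibrations among the eighteen. In characteristic $2$ a quasi-elliptic fibration has only additive fibres, and the admissible Kodaira types are restricted to ${\rm II},\ {\rm III},\ {\rm I}_{n}^{*},\ {\rm III}^{*},\ {\rm II}^{*}$; in particular fibres of type ${\rm I}_{n}$ $(n\geq1)$, ${\rm IV}$ and ${\rm IV}^{*}$ cannot occur. Each of the eight configurations in the statement contains either a fibre ${\rm I}_{n}$ or a fibre ${\rm IV}^{*}$, and is therefore elliptic, while the remaining ten configurations involve only types drawn from the quasi-elliptic list and are quasi-elliptic, hence excluded. I expect the main obstacle to be the enumeration of the eighteen frames together with the verification that no further root configuration arises, and, within the separation step, the case $({\rm IV}^{*},{\rm IV}^{*},{\rm IV}^{*})$, which carries no multiplicative fibre and whose ellipticity depends precisely on the exclusion of ${\rm IV}^{*}$ from quasi-elliptic fibrations in characteristic $2$.
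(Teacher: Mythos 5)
The paper does not prove this statement at all: it is quoted verbatim from Elkies--Sch\"utt \cite[Theorem 1]{ES} and Kondo--Shimada \cite[Theorem 4.7]{KS}, so there is no internal argument to compare yours against. Judged on its own terms, your sketch does reproduce the strategy of those references --- passing to the frame lattice $W=U^{\perp}$ of a fibration with section, reading off the reducible fibres from the root sublattice $R\subset W$ via Shioda--Tate and the $ADE$ dictionary, balancing $c_{2}(Y)=24$ with wild ramification at additive fibres, and finally discarding the quasi-elliptic fibrations using the fact that in characteristic $2$ such fibrations admit no fibres of type ${\rm I}_{n}$ $(n\geq 1)$, ${\rm IV}$ or ${\rm IV}^{*}$. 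That last exclusion correctly certifies that each of the eight listed configurations is elliptic, including the delicate case $({\rm IV}^{*},{\rm IV}^{*},{\rm IV}^{*})$.

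Two points deserve tightening. First, the frame lattice is \emph{not} ``fixed once and for all by the uniqueness of $Y$'': since ${\rm NS}(Y)$ has signature $(1,21)$ it contains no $U\oplus U$, Eichler's criterion does not apply, and the frames of the eighteen fibrations are only constrained to lie in a single genus; the enumeration of the eighteen orbits of primitive isotropic vectors (equivalently, of the classes in that genus together with their root sublattices) is precisely the hard computational content of \cite{KS}, not a consequence of the uniqueness of $Y$. Second, your dismissal of the remaining ten configurations --- ``they involve only types drawn from the quasi-elliptic list and are therefore quasi-elliptic'' --- is not a valid implication: a fibration all of whose fibres are of types ${\rm III}$, ${\rm I}_{n}^{*}$, ${\rm III}^{*}$, ${\rm II}^{*}$ could a priori still be elliptic, so each of those ten cases must be checked to be quasi-elliptic (in \cite{ES} this is done via the explicit models, which you already invoke for the ${\rm I}_{n}$-versus-additive ambiguity; note also that leaning on those models makes the argument partly a verification of the cited classification rather than an independent proof).
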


Finally we recall the theory of reflection groups in hyperbolic spaces.
First we consider the case of Enriques surfaces.
Let $X$ be an Enriques surface and let $\Num(X)$ be the quotient of the N\'eron-Severi group 
of $X$ by the torsion subgroup.  Then $\Num(X)$ together with the intersection product is
an even unimodular lattice of signature $(1,9)$ (Illusie \cite{Ill}). 
We denote by ${\rm O}(\Num(X))$ the orthogonal group of $\Num(X)$. The set 
$$\{ x \in \Num(X)\otimes {\bf R} \ : \ \langle x, x \rangle > 0\}$$ 
has two connected components.
Denote by $P(X)$ the connected component containing an ample class of $X$.  
For $\delta \in \Num(X)$ with $\delta^2=-2$, we define
an isometry $s_{\delta}$ of $\Num(X)$ by
$$s_{\delta}(x) = x + \langle x, \delta\rangle \delta, \quad x \in \Num(X).$$ 
The isometry $s_{\delta}$ is called the reflection associated with $\delta$.
Let $W(X)$ be the subgroup of
${\rm O}(\Num(X))$ generated by reflections associated with all nodal  
curves on $X$.  Then $P(X)$ is divided into chambers 
each of which is a fundamental domain with respect to
the action of $W(X)$ on $P(X)$.  We remark that the automorphism group ${\rm Aut}(X)$ is finite if the index $[{\rm O}(\Num(X)) : W(X)]$ is finite (Dolgachev \cite[Proposition 3.2]{D}).

Now, we recall Vinberg's result
which guarantees that a group generated by a finite number of reflections is
of finite index in the orthogonal group. 
Let $L$ be an even lattice of signature $(1,n)$.
Let $\Delta$ be a finite set of $(-2)$-vectors in $L$.
Let $\Gamma$ be the graph of $\Delta$, that is,
$\Delta$ is the set of vertices of $\Gamma$ and two vertices $\delta$ and $\delta'$ are joined 
by $m$-tuple lines if $\langle \delta, \delta'\rangle=m$.
We assume that the cone
$$K(\Gamma) = \{ x \in L\otimes {\bf R} \ : \ \langle x, \delta_i \rangle \geq 0, \ \delta_i \in \Delta\}$$
is a strictly convex cone. Such $\Gamma$ is called non-degenerate.
A connected parabolic subdiagram $\Gamma'$ in $\Gamma$ is a  Dynkin diagram of 
type $\tilde{A}_m$, $\tilde{D}_n$ or $\tilde{E}_k$ (see Vinberg \cite[p. 345, Table 2]{V}).  
If the number of vertices of $\Gamma'$ is $r+1$, then $r$ is called the rank of $\Gamma'$.  
A disjoint union of connected parabolic subdiagrams is called a parabolic subdiagram of $\Gamma$.  
We denote by $\tilde{K_1}\oplus \tilde{K_2}$ a parabolic subdiagram which is a disjoint union of two
connected parabolic subdiagrams of type $\tilde{K_1}$ and $\tilde{K_2}$, where
$K_i$ is $A_m$, $D_n$ or $E_k$. The rank of a parabolic subdiagram is the sum of 
the rank of its connected components.  Note that the dual graph of reducible fibers 
of an elliptic fibration gives a parabolic subdiagram.  
For example, a singular fiber of type ${\rm III}$, ${\rm IV}$ or ${\rm I}_{n+1}$ 
defines a parabolic subdiagram of type $\tilde{A}_1$, $\tilde{A}_2$ or 
$\tilde{A}_n$ respectively.  
We denote by $W(\Gamma)$ the subgroup of ${\rm O}(L)$ 
generated by reflections associated with $\delta \in \Gamma$.

\begin{prop}\label{Vinberg}{\rm (Vinberg \cite[Theorem 2.3]{V})}
Let $\Delta$ be a set of $(-2)$-vectors in an even lattice $L$ of signature $(1,n)$ 
and let $\Gamma$ be the graph of $\Delta$.
Assume that $\Delta$ is a finite set, $\Gamma$ is non-degenerate and $\Gamma$ contains 
no $m$-tuple lines with $m \geq 3$.  Then $W(\Gamma)$ is of finite index in ${\rm O}(L)$ 
if and only if every connected parabolic subdiagram of $\Gamma$ is a connected component of some
parabolic subdiagram in $\Gamma$ of rank $n-1$ {\rm (}= the maximal one{\rm )}.
\end{prop}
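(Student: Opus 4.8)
This is Vinberg's finite-volume criterion for hyperbolic reflection groups, so the plan is to translate everything into the geometry of the real hyperbolic space attached to $L$ and then appeal to Vinberg's local analysis at the vertices and cusps of the fundamental chamber.

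First I would set up the hyperbolic picture. Since $L$ has signature $(1,n)$, the cone $\{x\in L\otimes{\bf R} : \langle x,x\rangle>0\}$ has two components, and projectivizing one of them realizes real hyperbolic $n$-space ${\bf H}^n$, on which ${\rm O}(L)$ acts properly discontinuously. Because $L$ is an integral lattice, ${\rm O}(L)$ is an arithmetic subgroup of ${\rm O}(1,n)({\bf R})$ and hence a lattice of finite covolume (Borel--Harish-Chandra). Each $\delta\in\Delta$ is a spacelike vector ($\langle\delta,\delta\rangle=-2$), so its orthogonal hyperplane $H_\delta$ meets ${\bf H}^n$ in a hyperbolic hyperplane and $s_\delta$ is the reflection in it. The hypothesis that $K(\Gamma)$ is strictly convex means precisely that the image $P$ of $K(\Gamma)$ in ${\bf H}^n$ is a convex polytope with finitely many facets and containing no complete geodesic line; the absence of $m$-tuple lines with $m\geq 3$ means that for distinct $\delta_i,\delta_j$ the value $m=\langle\delta_i,\delta_j\rangle$ lies in $\{0,1,2\}$, so the two facets either meet orthogonally ($m=0$), meet at dihedral angle $\pi/3$ ($m=1$), or are asymptotically parallel and meet at $\partial{\bf H}^n$ ($m=2$). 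In every case the dihedral angle is a submultiple of $\pi$, so $P$ is a Coxeter polytope whose facets are the $H_{\delta_i}\cap P$, the group $W(\Gamma)$ is discrete, and $P$ is a strict fundamental domain for the action of $W(\Gamma)$ on ${\bf H}^n$.

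Next I would reduce the index statement to a volume statement. Since $P$ is a fundamental domain, ${\rm vol}(W(\Gamma)\backslash{\bf H}^n)={\rm vol}(P)$, while ${\rm O}(L)$, modulo the finite kernel $\{\pm 1\}$ of its action on ${\bf H}^n$, has some finite covolume $v_0>0$. As $W(\Gamma)\subseteq{\rm O}(L)$, multiplicativity of covolume under subgroups gives $[{\rm O}(L):W(\Gamma)]\cdot v_0={\rm vol}(P)$ whenever the index is finite, and ${\rm vol}(P)=\infty$ precisely when the index is infinite. Thus $[{\rm O}(L):W(\Gamma)]<\infty$ if and only if ${\rm vol}(P)<\infty$, and the problem becomes: decide finiteness of the volume of the Coxeter polytope $P$ from its diagram $\Gamma$.

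The heart of the argument is Vinberg's combinatorial description of $\overline{P}$ inside ${\bf H}^n\cup\partial{\bf H}^n$. The dictionary is that an elliptic (finite-type) subdiagram of rank $r$ corresponds to a face of $P$ of codimension $r$ meeting ${\bf H}^n$; in particular an elliptic subdiagram of maximal rank $n$ corresponds to an ordinary vertex of $P$, while a parabolic (affine) subdiagram of rank $n-1$ corresponds to an ideal vertex of $P$ on $\partial{\bf H}^n$, near which $P$ looks like a finite-volume cusp with Euclidean cross-section. Strict convexity guarantees that $P$ has no linear part and is the convex hull of its finitely many ordinary and ideal vertices, so the only possible source of infinite volume is an end that fails to close up into a genuine cusp. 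A connected parabolic subdiagram of rank $r<n-1$ represents a parabolic direction of deficient dimension; if it cannot be completed to a rank-$(n-1)$ parabolic subdiagram, the corresponding boundary face recedes to infinity without bounding a finite-volume cusp, forcing ${\rm vol}(P)=\infty$. Conversely, if every connected parabolic subdiagram is a component of a parabolic subdiagram of the maximal rank $n-1$, then every ideal point of $\overline{P}$ is a full cusp and $P$ has finite volume. This is exactly the content of Vinberg's Theorem 2.3, and combined with the covolume comparison it yields the stated equivalence. The main obstacle is precisely this last step: showing that the combinatorics of parabolic subdiagrams controls finiteness of volume requires Vinberg's local analysis of an acute-angled polytope near a boundary point --- identifying the link of an ideal vertex with a Euclidean Coxeter cell exactly when the associated subdiagram is parabolic of rank $n-1$, and estimating horoball volumes to see that a deficient parabolic direction produces an infinite-volume end. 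I would carry out the hyperbolic set-up and the covolume comparison in full, but invoke Vinberg's finite-volume criterion for this geometric core rather than reproving it.
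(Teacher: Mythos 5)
The paper offers no proof of this proposition: it is quoted verbatim as Vinberg's Theorem 2.3, so the ``paper's proof'' is simply the citation \cite{V}. Your outline is a correct reconstruction of the standard derivation --- realizing the $\delta_i$ as walls of a Coxeter polytope $P$ in $\mathbf{H}^n$ with dihedral angles $\pi/2$, $\pi/3$, or $0$ according to $m=0,1,2$, reducing $[\mathrm{O}(L):W(\Gamma)]<\infty$ to $\mathrm{vol}(P)<\infty$ via the finite covolume of the arithmetic group $\mathrm{O}(L)$, and then invoking Vinberg's combinatorial finite-volume criterion for the cusp analysis --- and since you, like the author, ultimately cite Vinberg for the geometric core, this is essentially the same approach as the paper's.
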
 

\begin{remark}\label{Vinbergremark}
Note that $\Gamma$ as in the above proposition is automatically non-degenerate if it contains the components of the reducible fibers of an extremal genus one fibration on an Enriques surface and a special bi-section of this fibration. Indeed, these nodal curves generate 
${\rm Num}(X)\otimes {\bf Q}$ and hence $K(\Gamma)$ is strictly convex.
\end{remark}

Let $L$ be an even lattice isomorphic to the N\'eron-Severi lattice of the supersingular $K3$ surface 
$Y$ in characteristic 2 with the Artin invariant 1.  Then $L$ has the signature $(1,21)$ and the  discriminant $-2^2$.  
In this case the reflection subgroup generated by
reflections associated with all $(-2)$-vectors is not of finite index in ${\rm O}(L)$.  However the subgroup generated by all reflections (not only $(-2)$-reflections, but also $(-4)$-reflections in ${\rm O}(L)$) is of finite index in ${\rm O}(L)$. Such lattice is called reflective. There exist reflective lattices of
signature $(1,n)$ only if $1\leq n \leq 19$ or $n=21$ (Esselmann \cite{Esselmann}).
Moreover the lattice $L$ is the only known example of reflective lattices in rank 22 due to 
Borcherds \cite{Bo}. The automorphism group ${\rm Aut}(Y)$ is infinite, that is, the ample cone has infinitely many facets.  Here a facet means a face of codimension 1.  
On the other hand, there exists a finite polyhedron in the ample cone which has
42 facets defined by 42 $(-2)$-vectors and 168 facets defined by 168 $(-4)$-vectors.
The 42 $(-2)$-vectors correspond to 42 nodal curves in $\calA$ and $\calB$ on $Y$.  
The 168 $(-4)$-vectors correspond to 
\begin{equation}\label{168}
2h - (E_1 + \cdots + E_6)
\end{equation}
where $h$ is the pullback of the class of a line on ${\bf P}^2$ under the map $Y \to Z \to {\bf P}^2$ and $E_1, \ldots, E_6$ are nodal curves over six points in general position on ${\bf P}^2({\bf F}_4)$.
Here a set of six points on ${\bf P}^2({\bf F}_4)$ is called general if no three points are collinear.
There are exactly 168 sets of six points in general position.  
Each of these 42 $(-2)$- and 168 $(-4)$-vectors defines a reflection in ${\rm O}(L)$.
The finite polyhedron is a fundamental domain of the group generated by all reflections associated with 42 $(-2)$- and 168 $(-4)$-vectors.  The reflections associated with 
168 $(-4)$-vectors are realized by automorphisms of $Y$.  Thus we can give a generator of
${\rm Aut}(Y)$ (Dolgachev-Kondo \cite{DK}).

\section{Examples}\label{sec3}

\subsection{Enriques surfaces of type ${\rm MI}$}\label{A}

This example was given in Katsura and the author \cite{KK1}.  We recall it briefly.
Let
$$x_1^2x_2 + x_1x_2^2 + x_0^3 + sx_0(x_1^2+x_1x_2+x_2^2)=0$$
be a pencil of cubics on ${\bf P}^2$ with a parameter $s$.  The base points of the pencil are nine 
${\bf F}_4$-rational points. There are exactly four members ($s^3=1$ and $s=\infty$) in the pencil which consist of three lines on ${\bf P}^2({\bf F}_4)$.  
By blowing-up the nine base points we have a rational elliptic
surface with four singular fibers of type ${\rm I}_3$ and with nine sections.  Recall that there 
exist exactly 5 lines in ${\bf P}^2({\bf F}_4)$ passing a point 
in ${\bf P}^2({\bf F}_4)$.  This implies that there are nine bi-sections of the elliptic fibration
passing a singular point of singular fibers.
Now consider the Frobenius base change $t^2=s$ of the pencil
$$x_1^2x_2 + x_1x_2^2 + x_0^3 + t^2x_0(x_1^2+x_1x_2+x_2^2)=0$$
which has 12 rational double points of type $A_1$ over the singularities of singular fibers of type 
${\rm I}_3$.  
We will use its affine model 
$$y^2 + y + x^3 + t^2x(y^2 + y +1)=0.$$
By resolution of singularities, we have an elliptic fibration 
$$g: Y \to {\bf P}^1$$
which has four singular fibers of type ${\rm I}_6$ and 18 sections.
Note that the 9 base points and the 12 singular points of the singular fibers of type ${\rm I}_3$ of the cubic pencil are exactly the 21 ${\bf F}_4$-rational points on ${\bf P}^2$.  Thus
$Y$ is birational to the inseparable double covering of ${\bf P}^2$ given in \S 2.  
Hence $Y$ is the supersingular $K3$ surface with Artin invariant 1.
Note that $Y$ contains 42 nodal curves which are 24 components of singular fibers of $g$
and 18 sections.  These 42 nodal curves correspond to
21 lines and 21 points on ${\bf P}^2({\bf F}_4)$.  Now consider a rational derivation defined by
\begin{equation}\label{derMI}
D_{a,b}= 
{1\over (t-1)}\left((t-1)(t + a)(t+b)\frac{\partial}{\partial t} + (1 + t^2x)\frac{\partial}{\partial x}\right)
\end{equation}
where $a, b \in k, \ a+b=ab,\ a^3\not=1$.
Then $D_{a,b}^2 = abD_{a,b}$, that is, $D_{a,b}$ is 2-closed.  
It is known that $D_{a,b}$ is divisorial, and hence the quotient surface $Y^{D_{a,b}}$ is nonsingular.
Moreover the integral nodal curves with respect to $D_{a,b}$ are the disjoint union of twelve nodal curves which are components of four singular fibers of type ${\rm I}_6$.
By blowing-down twelve $(-1)$-curves on $Y^{D_{a,b}}$ which are the images of integral nodal curves, we have 
an Enriques surface $X_{a,b}$.  The fibration $g: Y\to {\bf P}^1$ induces 
an elliptic fibration $f : X \to {\bf P}^1$ which has four singular fibers of type ${\rm I}_3$ and 
18 special bi-sections.
Thus there are 30 nodal curves on $X_{a,b}$.  On the other hand, among the 168 divisors given in (\ref{168}),
there are exactly ten divisors which are orthogonal to all twelve integral nodal curves.  The images of
these ten $(-4)$-divisors descend to ten $(-2)$-divisors on $X_{a,b}$.
%The dual graph of the 30 nodal curves and the 10 $(-2)$-divisors satisfies the .

\begin{theorem}\label{thmMI}{\rm (Katsura and Kondo \cite[Theorems 4.8, 7.5]{KK1})} There exists a $1$-dimensional family $\{X_{a,b}\}$ of classical and supersingular
Enriques surfaces whose canonical covers
$\bar{Y}_{a,b}$ have twelve nodes.  Here $a, b \in k,\ a+b=ab,\ a^3\not=1$.  
The minimal resolution of each $\bar{Y}_{a,b}$  is the supersingular 
$K3$ surface $Y$ with the Artin invariant $1$.  
If $a=0$, then $X_{a,b}$ is supersingular, and otherwise classical.
Each $X_{a,b}$ contains $30$ nodal curves and $10$ non-effective $(-2)$-classes which satisfy  the condition in Proposition {\rm \ref{Vinberg}}.  In particular the reflection subgroup generated by
reflections associated with these $40$ $(-2)$-vectors is of finite index in ${\rm O}({\rm Num}(X_{a,b}))$.
\end{theorem}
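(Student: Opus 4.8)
The statement to prove (Theorem \ref{thmMI}) asserts the existence of the 1-dimensional family $\{X_{a,b}\}$ of Enriques surfaces together with a long list of its properties. Since the detailed construction has already been carried out in the text preceding the theorem, my plan is to organize the proof around verifying each assertion in turn, treating the explicit derivation $D_{a,b}$ in (\ref{derMI}) and the fibration $g:Y\to{\bf P}^1$ as the central objects. The skeleton is: (i) confirm $D_{a,b}$ is $2$-closed and divisorial; (ii) identify the integral curves and deduce that the quotient is an Enriques surface with the claimed singularities on its canonical cover; (iii) count nodal curves and non-effective $(-2)$-classes on $X_{a,b}$; (iv) apply Proposition \ref{Vinberg} to obtain finiteness of the reflection subgroup; and (v) distinguish the classical from the supersingular members.

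First I would verify the algebra $D_{a,b}^2=ab\,D_{a,b}$ by a direct computation in the coordinates $(t,x)$ of the affine model $y^2+y+x^3+t^2x(y^2+y+1)=0$, so that $D_{a,b}$ is $2$-closed; the constraint $a+b=ab$ and $a^3\neq1$ is exactly what makes this hold and keeps the vector field away from the bad fiber. Using the Rudakov--Shafarevich divisor $(D_{a,b})$ and the zero-cycle $\langle D_{a,b}\rangle$ introduced in Section \ref{sec2}, I would check that $\langle D_{a,b}\rangle=0$, i.e. that $D_{a,b}$ is divisorial, so that $Y^{D_{a,b}}$ is nonsingular. One should then determine which irreducible curves are integral with respect to $D_{a,b}$ via Proposition \ref{insep}; the expectation is that exactly the twelve components of the four ${\rm I}_6$-fibers that form a disjoint union of $(-2)$-curves are integral, so under the purely inseparable quotient $\pi:Y\to Y^{D_{a,b}}$ their images become twelve disjoint $(-1)$-curves (using $C^2=2C'^2$ from Proposition \ref{insep}(1)). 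Blowing these down yields $X_{a,b}$, and the canonical bundle formula (\ref{canonical}) together with the Euler number formula (\ref{euler}) confirms that $X_{a,b}$ is an Enriques surface and that the canonical cover $\bar Y_{a,b}$ is obtained by contracting the twelve nodal curves on $Y$, producing twelve nodes (rational double points of type $A_1$); since these are only rational double points, $\rho:Y\to\bar Y_{a,b}$ is the minimal resolution and $Y$ is the supersingular $K3$ surface with Artin invariant $1$.

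Next I would count the relevant curves and classes on $X_{a,b}$. The fibration $g:Y\to{\bf P}^1$ with four ${\rm I}_6$ fibers and $18$ sections descends to $f:X_{a,b}\to{\bf P}^1$; since $\pi$ contracts the twelve integral components, each ${\rm I}_6$ fiber of $g$ maps to an ${\rm I}_3$ fiber of $f$, and the $18$ sections of $g$ descend to $18$ special bi-sections of $f$. Combining the $4\times3=12$ remaining fiber components with the $18$ bi-sections gives the $30$ nodal curves on $X_{a,b}$. For the non-effective classes, I would examine the $168$ $(-4)$-vectors of (\ref{168}) and isolate those orthogonal to all twelve integral nodal curves; the claim is that exactly ten such vectors exist, and that their images under $\pi$ descend to ten non-effective $(-2)$-classes on $X_{a,b}$. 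Verifying orthogonality against the twelve integral curves and checking non-effectivity is essentially a lattice computation inside ${\rm NS}(Y)$, which is explicitly described in Section \ref{sec2} via the configurations $\calA,\calB$ and the sets of six points in general position on ${\bf P}^2({\bf F}_4)$.

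The final assertion, finiteness of the index of the reflection subgroup, follows from Proposition \ref{Vinberg} applied to the lattice $L={\rm Num}(X_{a,b})$ of signature $(1,9)$ with $\Delta$ the set of $40$ $(-2)$-vectors. By Remark \ref{Vinbergremark}, non-degeneracy is automatic because the dual graph already contains the reducible fibers of an extremal elliptic fibration (the four ${\rm I}_3$'s) together with a special bi-section, so these classes span ${\rm Num}(X_{a,b})\otimes{\bf Q}$ and $K(\Gamma)$ is strictly convex. The main work is the combinatorial verification of Vinberg's criterion: every connected parabolic subdiagram of $\Gamma$ (i.e. every $\tilde A$, $\tilde D$ or $\tilde E$) must occur as a component of a parabolic subdiagram of maximal rank $n-1=8$. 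I expect \textbf{this step to be the main obstacle}, since it requires an exhaustive enumeration of the parabolic subdiagrams of the full dual graph of the $40$ vectors and matching each to a rank-$8$ configuration; this is exactly the kind of finite but intricate bookkeeping on the configuration of nodal and non-effective classes that carries the real content. Lastly, I would pin down the classical-versus-supersingular dichotomy by locating the multiple fibers of $f$ according to Proposition \ref{multi-fiber}: the member with $a=0$ has a supersingular elliptic (or additive) multiple fiber and is therefore supersingular, while for $a\neq0$ the two tame double fibers force $X_{a,b}$ to be classical.
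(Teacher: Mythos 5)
Your outline reproduces essentially the same argument as the paper: the paper's own treatment of Theorem \ref{thmMI} consists of the construction recalled in Section \ref{A} (the $2$-closed divisorial derivation $D_{a,b}$ on the fibration $g$ with four ${\rm I}_6$ fibers, the twelve integral fiber components, the blow-down to $X_{a,b}$, the count $12+18=30$ of nodal curves, the ten $(-4)$-vectors among the $168$ orthogonal to the integral curves, and Vinberg's criterion), with the detailed verifications delegated to Katsura--Kond\=o \cite{KK1}. Your steps, including the identification of $\bar Y_{a,b}$ as the contraction of the twelve integral curves and the multiple-fiber criterion for the classical/supersingular dichotomy, match that route, so the proposal is correct and not a genuinely different approach.
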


We mention an another detail of the 30 nodal curves.
There exist twelve canonical points on $X_{a,b}$ which are the images of twelve integral curves.
Each nodal curve passes through two canonical points.  
Recall that there are 42 nodal curves in $\calA$ and $\calB$.
We have decompositions
$$\calA = \calA_0 \cup \calA_1, \quad \calB = \calB_0 \cup \calB_1$$
where both $\calA_0$ and $\calB_0$ consist of six integral curves.  We denote by $\bar{\calA}_0$ and
$\bar{\calB}_0$ the sets of six canonical points on $X_{a,b}$ which are the 
images of $\calA_0$ and $\calB_0$, respectively.
In the following Figure \ref{A5},
the six black nodes denote the six canonical points in $\bar{\calA}_0$ or in $\bar{\calB}_0$, and 
the 15 lines denote the 15 nodal curves passing through two canonical points from the six canonical points.
Thus we conclude that the 30 nodal curves are divided into two sets of 15 nodal curves whose incidence relation is given in Figure \ref{A5}.  
Nodal curves in Figure \ref{A5} meet only at canonical points.
%Any two nodal curves in Figure \ref{A5} meet only at one canonical point. 
Each member in a set is tangent to exactly three members in another set.

\begin{figure}[!htb]
 \begin{center}
  \includegraphics[width=40mm]{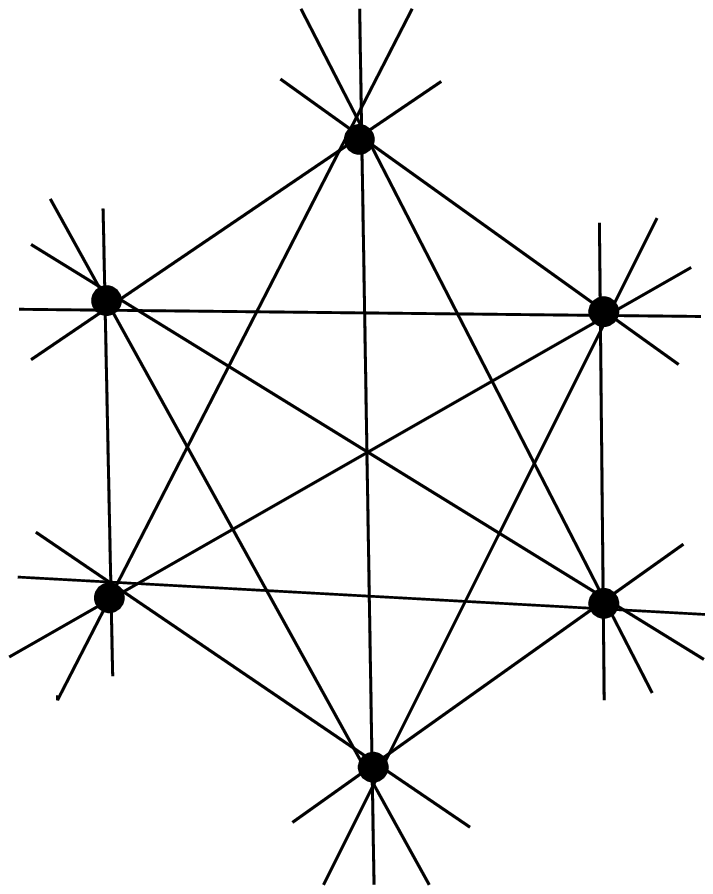}
 \end{center}
 \caption{}
 \label{A5}
\end{figure}

The set of elliptic fibrations on $Y$ up to ${\rm Aut}(Y)$ bijectively corresponds to
the set of primitive isotropic vectors in ${\rm NS}(Y)$ contained in the closure of the finite polyhedron
defined by 42 nodal curves and 168 $(-2)$-curves.  It follows that any elliptic 
fibration on $X_{a,b}$ is isomorphic to one of fibrations corresponding to primitive isotropic vectors in ${\rm Num}(X_{a,b})$ contained in the closure of the finite polyhedron
defined by $40$ $(-2)$-vectors mentioned in Theorem \ref{thmMI}. Thus we have the following Proposition. 

\begin{proposition}\label{MIfibrations}{\rm (Katsura and Kondo \cite[Lemma 7.2 and the subsequent arguments]{KK1})}
There exist exactly four types of elliptic fibrations on $X_{a,b}$ as follows$:$
$$({\rm I}_5, {\rm I}_5, {\rm I}_1, {\rm I}_1),\ ({\rm I}_6,  2{\rm IV}, {\rm I}_2), \ 
({\rm I}_4, {\rm I}_4, 2{\rm III}),\ ({\rm I}_3, {\rm I}_3, {\rm I}_3, {\rm I}_3).$$ 
In each case there are exactly twelve singular points of fibers which are canonical points of $X_{a,b}$, that is, the images of twelve integral curves.  All elliptic fibrations are special.
\end{proposition}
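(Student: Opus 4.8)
The plan is to reduce the classification to a finite combinatorial problem by means of the reflection-group description of $\Num(X_{a,b})$ furnished by Theorem \ref{thmMI}, and then to pin down the precise Kodaira types by passing to the $K3$ cover $Y$ and invoking the uniqueness statement of Theorem \ref{ElkiesSchutt}. I first recall that a genus one fibration on an Enriques surface is given by the class of its half-fiber, a primitive isotropic vector $e\in\Num(X_{a,b})$ in the closure of the nef cone, and that two such fibrations are isomorphic exactly when their half-fiber classes are equivalent under $\Aut(X_{a,b})$. By Theorem \ref{thmMI} the thirty nodal curves together with the ten non-effective $(-2)$-classes generate a reflection subgroup of finite index in $\O(\Num(X_{a,b}))$, whose fundamental domain is the finite polyhedron $\calD$ cut out by the forty corresponding walls. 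As recalled just before the statement, it follows that every genus one fibration has a half-fiber class lying in $\overline{\calD}$, so the classification reduces to enumerating the primitive isotropic vectors in $\overline{\calD}$. Since $\calD$ has finite volume, each such vector is a cusp, governed by Proposition \ref{Vinberg} by a rank-$8$ parabolic subdiagram of the dual graph $\Gamma$ of these forty vectors; inspecting $\Gamma$, whose two halves are the fifteen-curve configurations of Figure \ref{A5}, and grouping the cusps under $\Aut(X_{a,b})$ yields exactly four classes.

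For each cusp I read the reducible fibers off the connected components of its parabolic subdiagram, distinguishing the genuine nodal curves from any of the ten non-effective classes that occur among its vertices; a non-effective vertex contributes to the Mordell--Weil lattice of the fibration rather than to a fiber, which is what separates the three types whose effective vertical part already attains rank $8$ from the one whose effective part has rank $7$. To fix the Kodaira types unambiguously in characteristic $2$, I use that $f$ induces a genus one fibration on the minimal resolution $Y$ of $\bar Y_{a,b}$; by Theorem \ref{ElkiesSchutt2} this is one of eight types, and by Theorem \ref{ElkiesSchutt} it is uniquely determined. Comparing a fiber component with its image under the degree-two inseparable quotient by means of Proposition \ref{insep}, the four cusps lift respectively to the $K3$ fibrations with singular fibers $(\I_6,\I_6,\I_6,\I_6)$, $(\I_{10},\I_{10},\I_2,\I_2)$, $(\I_{12},\I_4,\IV^*)$ and $(\I_8,\I_8,\I_1^*)$, and descend to the asserted types $(\I_3,\I_3,\I_3,\I_3)$, $(\I_5,\I_5,\I_1,\I_1)$, $(\I_6,2\IV,\I_2)$ and $(\I_4,\I_4,2\III)$; the multiple fibers, and the prefix $2$, are located using Proposition \ref{multi-fiber}, which for the classical members provides two tame multiple fibers, one of additive type.

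It remains to verify the two supplementary assertions. Each fibration is special, since among the thirty nodal curves not contained in its fibers at least one meets the half-fiber class with multiplicity one and is thus a nodal special bi-section, whence Proposition \ref{Cossec} applies. For the canonical points, the twelve integral curves are vertical for the induced fibration on $Y$; by Proposition \ref{insep} each is the pullback of a $(-1)$-curve on the quotient $Y^{D_{a,b}}$, and contracting these twelve curves produces exactly the twelve canonical points, which are therefore the twelve singular points of the fibers of $f$ coming from the integral curves. I expect the fiber-type determination of the second paragraph to be the main obstacle: in characteristic $2$ the occurrence of non-effective vertices in the parabolic subdiagram and of wild ramification on the additive fibers means that the component count alone does not fix the Kodaira type, and it is precisely the uniqueness of each genus one fibration on the supersingular $K3$ surface (Theorem \ref{ElkiesSchutt}) that removes this ambiguity.
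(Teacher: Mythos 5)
Your argument follows essentially the same route as the paper: the paper reduces the classification to primitive isotropic vectors in the closure of the finite polyhedron cut out by the forty $(-2)$-vectors of Theorem \ref{thmMI} (equivalently, to maximal rank-$8$ parabolic subdiagrams of their dual graph), then pins down the fiber types by lifting to the elliptic fibrations on $Y$, which are classified and unique by Theorems \ref{ElkiesSchutt} and \ref{ElkiesSchutt2}; this is exactly your first two paragraphs, and it is also precisely how the parallel Proposition \ref{sing-fibers} for type MII is proved in the text (the present proposition itself being delegated to \cite{KK1}).

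The one step I would tighten is your appeal to Proposition \ref{multi-fiber} to ``locate the prefix $2$.'' That proposition only shows that a multiplicative fiber ${\rm I}_n$ cannot be a multiple fiber of a classical or supersingular Enriques surface; it does not show that the additive fibers ${\rm IV}$ and ${\rm III}$ occurring here \emph{are} multiple. That conclusion is obtained, as in Lemmas \ref{possiblebisection2}, \ref{possiblebisection3} and \ref{possiblebisection6} (or from the explicit construction in \cite{KK1}), by intersecting a special bi-section with the reduced additive fiber: since the relevant components of the corresponding fiber of $g$ on $Y$ are integral, the bi-section meets the reduced fiber in a single canonical point with multiplicity one, so the fiber class is twice the reduced divisor. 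With that intersection computation supplied, your proof is complete and matches the paper's.
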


\noindent

\begin{lemma}\label{bi-section1} 
Let $f:X_{a,b}\to {\bf P}^1$ be an elliptic fibration.
\noindent
\begin{itemize}
\item[$(1)$] In case that $f$ is of type $({\rm I}_5, {\rm I}_5, {\rm I}_1, {\rm I}_1)$, any special bi-section passes through a singular point of a fiber of type ${\rm I}_5$ and that of a fiber of type ${\rm I}_1$, and is tangent to each other singular fiber at a simple point.
\item[$(2)$] In case of $({\rm I}_6,  2{\rm IV}, {\rm I}_2)$, any special bi-section passes through 
a singular point of 
the fiber of type ${\rm I}_2$ and is tangent to the fiber of type ${\rm I}_6$ at a simple point.  
There are four
canonical points on the multiple fiber of type ${\rm IV}$ one of them is the singular point of the fiber and others are simple points on each component.  Any special bi-section passes through a canonical point on the multiple fiber. 
\item[$(3)$] In case of type $({\rm I}_4, {\rm I}_4, 2{\rm III})$, any special bi-section passes through 
a singular point of 
a fiber of type ${\rm I}_4$ and is tangent to the other fiber of type ${\rm I}_4$ at a simple point.
There are four canonical points on the multiple fiber of type ${\rm III}$.  Each component of the multiple fiber contains two canonical points both of which are simple points of the fiber.  Any special bi-section passes 
through a canonical point on the multiple fiber. 
\item[$(4)$] In case of type $({\rm I}_3, {\rm I}_3, {\rm I}_3, {\rm I}_3)$, any special bi-section passes through a singular point of two fibers of type ${\rm I}_3$, and is tangent to  
each other singular fiber at a simple point.
\end{itemize}
\end{lemma}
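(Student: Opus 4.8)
The plan is to treat the four fibration types of Proposition~\ref{MIfibrations} uniformly, reducing every assertion to the intersection of the nodal curve $s$ (the special bi-section) with the fibre components, and then reading off that intersection from the canonical cover $Y$. First I would record the universal numerical constraints. If $s$ is a special bi-section of $f:X_{a,b}\to {\bf P}^1$ and $F$, $F_0$ denote a general fibre and a half-fibre, then $s\cdot F=2$ and $s\cdot F_0=1$ (because $F\sim 2F_0$ in $\Num(X_{a,b})$), while $s\cdot C_i\geq 0$ for every component $C_i$ of a reducible fibre, since distinct nodal curves meet non-negatively. Hence $\sum_i(s\cdot C_i)=2$ on a non-multiple fibre and $\sum_i(s\cdot C_i)=1$ on the reduced multiple fibre of type ${\rm III}$ or ${\rm IV}$. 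I would also use the two facts recorded after Theorem~\ref{thmMI}: each of the $30$ nodal curves passes through exactly two of the twelve canonical points, and these twelve points all lie on the singular fibres.

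The mechanism separating ``through a singular point'' from ``tangent at a simple point'' comes from lifting to $Y$. By Lemma~\ref{nodal} the bi-section lifts to an irreducible nodal curve $\tilde s$ with $\tilde s\to s$ of degree one; since the half-fibre of $f$ pulls back to a reduced fibre on $Y$ one finds $\tilde s\cdot F_Y=1$, so $\tilde s$ is a section of the induced genus one fibration on $Y$ (which exists by Theorem~\ref{ElkiesSchutt}). By Theorem~\ref{ElkiesSchutt2} this lifted fibration is the member of the list whose fibres match, the non-multiple ${\rm I}_n$ becoming ${\rm I}_{2n}$ and the multiple fibres $2{\rm IV}$, $2{\rm III}$ becoming ${\rm IV}^*$, ${\rm I}_1^*$. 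A section meets each $Y$-fibre transversally at one smooth point of one component, so $s$ interacts with each $X$-fibre through a single point. The twelve integral curves of $D_{a,b}$ contract to the twelve canonical points; thus if $\tilde s$ meets an integral component, its image $s$ runs through the resulting node, a singular point of the $X$-fibre, whereas if $\tilde s$ meets a non-integral component $\tilde C$ at a smooth point (so $\tilde s\cdot\tilde C=1$), then, writing $\pi:Y\to Y^{D_{a,b}}$ for the inseparable quotient and $s''$, $C''$ for the images of $\tilde s$, $\tilde C$, Proposition~\ref{insep}(2) gives $2\tilde s=\pi^*s''$ and $2\tilde C=\pi^*C''$; the degree two identity $4(\tilde s\cdot\tilde C)=\pi^*s''\cdot\pi^*C''=2(s''\cdot C'')$ forces $s''\cdot C''=2$ and hence $s\cdot C_X=2$, so $s$ is tangent to that component at a simple point of the $X$-fibre. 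Because $s$ meets an integral component precisely at each of its two canonical points and nowhere else, it runs through a node of $f$ over exactly these two points and is tangent at a simple point of every remaining non-multiple fibre.

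What remains is the assembly. For $({\rm I}_3,{\rm I}_3,{\rm I}_3,{\rm I}_3)$ the two canonical points of $s$ must lie in two distinct triangles, since two nodes of one ${\rm I}_3$ share a component and would force $s\cdot F\geq 4$; this gives two node-passings and two tangencies, proving $(4)$. The same count drives $(1)$, $(2)$ and $(3)$: the two canonical points again lie in distinct fibres, and for the multiple fibre $s\cdot F_0=1$ forbids $s$ from the triple point of ${\rm IV}$ (resp.\ the tangential node of ${\rm III}$), where it would meet two or three components, and forces it through a single half-fibre component once; identifying the four canonical points on $2{\rm IV}$ (resp.\ $2{\rm III}$) by contracting the integral components of ${\rm IV}^*$ (resp.\ ${\rm I}_1^*$) then shows this point is one of them.

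The main obstacle is the precise location of the twelve fixed integral curves inside each lifted fibration --- for instance proving that in case $(1)$ a section meets one integral component in an ${\rm I}_{10}$ and one in an ${\rm I}_2$, rather than two of the same type, so that $s$ runs through a node of an ${\rm I}_5$ and a node of an ${\rm I}_1$ as claimed. This distribution is \emph{not} forced by the numerical constraints alone, and I would settle it using the explicit ${\bf P}^2({\bf F}_4)$-model of $Y$ from Section~\ref{sec2}, the decomposition $\calA=\calA_0\cup\calA_1$, $\calB=\calB_0\cup\calB_1$ into integral and non-integral curves, and the uniqueness of each fibration up to isomorphism from Elkies--Sch\"utt, to pin down the section's incidence fibre by fibre.
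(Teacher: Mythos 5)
Your proposal is correct and follows essentially the same route as the paper: lift the special bi-section to a section of the induced fibration on $Y$ (unique by Elkies--Sch\"utt), translate ``meets an integral component'' into ``passes through a canonical/singular point'' and ``meets a non-integral component'' into ``tangent at a simple point,'' and then settle the fibre-by-fibre distribution of the two canonical points against the explicit model --- which is exactly what the paper does when it appeals to the torsion Mordell--Weil group and the known intersection relations among the $30$ nodal curves. Your honest observation that this distribution is not forced by the numerics alone is well taken (indeed the type $\rm VII$ surfaces realize the opposite distribution for the same fibre types), and your Proposition \ref{insep} computation simply makes explicit the tangency mechanism the paper leaves implicit.
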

\begin{proof}
In cases (1), (2), (4), the elliptic fibration $g$ on $Y$ induced from $f$ has a section and
its Mordell-Weil group is a torsion group.  Any special bi-sections of $f$ is 
one of 30 nodal curves mentioned in Theorem \ref{thmMI}.  Thus we directly prove the assertion.

In case (3), $g$ has singular fibers of type $({\rm I}_8, {\rm I}_8, {\rm I}_1^*)$.  Twelve
canonical points are the singular points of two fibers of type ${\rm I}_4$ and
four points on the singular fiber of type ${\rm III}$.  The last four points are the images of
four simple components of the fiber of $g$ of type ${\rm I}_1^*$.  Since the pullback of any special bi-section of $f$ is a section of $g$, it 
passes exactly one canonical point on the fiber of type ${\rm III}$.  Since any nodal curve passes two
canonical points (Lemma \ref{nodal}), we have the assertion.
\end{proof}

\begin{remark}
Over the complex numbers, S. Mukai obtained an Enriques surface which contains 30 nodal curves 
with the same dual graph as the above example.  The name "of type MI" comes from this fact.  The canonical cover of the Mukai's example is the intersection of three quadrics given by the equations:
$$x^2 - (1+\sqrt{3})yz = u^2 - (1-\sqrt{3})vw,$$
$$y^2 -  (1+\sqrt{3})xz = v^2 - (1-\sqrt{3})uw,$$
$$z^2 -  (1+\sqrt{3})xy = w^2 - (1-\sqrt{3})uv.$$
See Mukai and Ohashi \cite[Remark 2.7]{MO}.
\end{remark}

\subsection{Enriques surfaces of type ${\rm VII}$ }\label{B}

This type has appeared in the classification of Enriques surfaces with finite
automorphism group (Katsura and Kondo \cite{KK2}, Katsura, Kondo and Martin \cite{KKM}).
  
We start with a rational elliptic fibration defined by
$$y^2 + sxy + y = x^3 + x^2 +s$$
which has two singular fibers of type ${\rm I}_5$ over $s=1,\infty$ and two singular fibers of type
${\rm I}_1$ over $s=\omega, \omega^2 \ (\omega^3=1, \omega\not=1)$.
Taking the Frobenius base change $s=t^2$, we have an elliptic fibration $g : Y \to {\bf P}^1$ defined by
\begin{equation}\label{eqVII}
y^2 + t^2xy + y = x^3 + x^2 +t^2.
\end{equation}
The fibration $g$ has two singular fibers of type ${\rm I}_{10}$ over $t=1, \infty$ and
two singular fibers of type ${\rm I}_2$ over $t=\omega, \omega^2$.  And $g$ has 10 sections.
One can prove that $Y$ is the supersingular $K3$ surface with the Artin invariant 1.

Now consider a rational derivation defined by
$$D_{a,b}= 
{1\over (t-1)}\left((t-1)(t -a)(t-b)\frac{\partial}{\partial t} + (1 + t^2x)\frac{\partial}{\partial x}\right)
$$
where $a, b \in k$, $a+b=ab$ and $a^3\not=1$ (this derivation is the same as in the case of type MI given
in (\ref{derMI}).  However the equations of these surfaces are different).
Then $D_{a,b}^2 = abD_{a,b}$, that is, $D_{a,b}$ is 2-closed.  
It is known that $D_{a,b}$ is divisorial, and hence the quotient surface $Y^{D_{a,b}}$ is smooth.
Moreover the integral nodal curves with respect to $D_{a,b}$ are the disjoint union of twelve nodal curves which are components of the singular fibers of type ${\rm I}_{10}$ and of type ${\rm I}_2$.
By blowing-down the twelve $(-1)$-curves on $Y^{D_{a,b}}$ which are the images of integral nodal curves, we have an Enriques surface $X_{a,b}$.  The fibration $g$ induces an
elliptic fibration $f : X_{a,b} \to {\bf P}^1$ which has two singular fibers of type ${\rm I}_5$
and two singular fibers of type ${\rm I}_1$.  The ten sections of $g$ give ten bi-sections of $f$.  Thus there are 20 nodal curves on $X_{a,b}$ whose dual graph coincides with that of
the Enriques surface of type VII defined over ${\bf C}$ in Kondo \cite{Ko}.
The following Figure \ref{fano} is a part of the 20 nodal curves.  Each line denotes a nodal curve and the 10 black circles are a part of the 12 canonical points.  The remaining five nodal curves pass through
the remaining two canonical points. 
The dual graph of the 20 nodal curves satisfies the condition in Proposition \ref{Vinberg}.  In particular the reflection subgroup generated by
reflections associated with these $20$ $(-2)$-vectors is of finite index in ${\rm O}({\rm Num}(X_{a,b}))$.

\begin{figure}[!htb]
 \begin{center}
  \includegraphics[width=40mm]{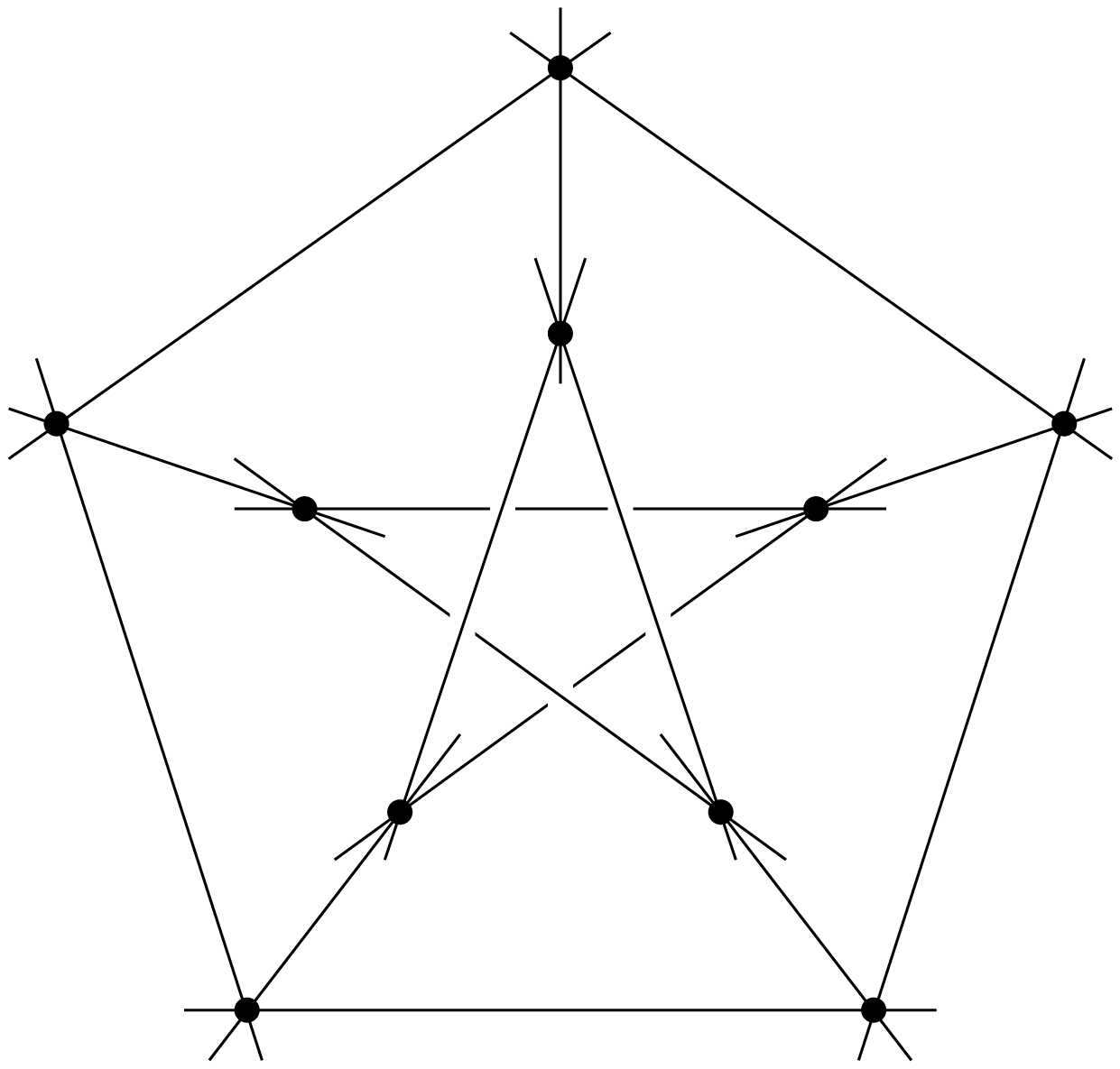}
 \end{center}
 \caption{}
 \label{fano}
\end{figure}

\begin{theorem}{\rm (Katsura and Kondo \cite[Theorems 3.15, 3.19]{KK2})} 
There exists a $1$-dimensional family $\{X_{a,b}\}$ of classical and supersingular 
Enriques surfaces whose canonical covers
$\bar{Y}_{a,b}$ have twelve nodes.  Here $a, b \in k,\ a+b=ab,\ a^3\not=1$.
The minimal resolution of each $\bar{Y}_{a,b}$ is the supersingular 
$K3$ surface $Y$ with Artin invariant $1$.  The surface $X$ contains exactly $20$ nodal curves and
the automorphism group ${\rm Aut}(X_{a,b})$ is isomorphic to the symmetric group $\mathfrak{S}_5$ of degree $5$.
\end{theorem}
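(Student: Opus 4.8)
The plan is to follow the same strategy as in the type $\mathrm{MI}$ case (Theorem \ref{thmMI}), adapting each step to the elliptic fibration $g$ of (\ref{eqVII}) with singular fibers $(\mathrm{I}_{10}, \mathrm{I}_{10}, \mathrm{I}_2, \mathrm{I}_2)$.

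First I would confirm that the surface $Y$ defined by (\ref{eqVII}) is the supersingular $K3$ surface with Artin invariant $1$. The Frobenius base change $s = t^2$ of the rational elliptic surface with fibers $(\mathrm{I}_5, \mathrm{I}_5, \mathrm{I}_1, \mathrm{I}_1)$ doubles the $\mathrm{I}_n$ fibers over the two ramification points $t = 1, \infty$, producing $g : Y \to \mathbf{P}^1$ with fibers $(\mathrm{I}_{10}, \mathrm{I}_{10}, \mathrm{I}_2, \mathrm{I}_2)$; the Euler numbers sum to $10+10+2+2 = 24$, so $Y$ is a $K3$ surface. This fiber configuration occurs in the list of Theorem \ref{ElkiesSchutt2}, and since by that theorem the corresponding fibration has a unique model over $k$, the surface $Y$ must be the supersingular $K3$ surface with Artin invariant $1$; alternatively one checks that the sublattice of $\mathrm{NS}(Y)$ generated by the $24$ fiber components, a general fiber, and the ten sections has rank $22$ and discriminant $-2^2$.

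Next I would verify the assertions on the derivation $D_{a,b}$ by local computation: the relation $a+b = ab$ yields $D_{a,b}^2 = ab\,D_{a,b}$, so $D_{a,b}$ is $2$-closed, and checking $\langle D_{a,b}\rangle = 0$ shows it is divisorial, whence $Y^{D_{a,b}}$ is smooth. The canonical bundle formula (\ref{canonical}) gives $\pi^* K_{Y^{D}} \sim -(D_{a,b})$, and a local analysis identifies the integral curves with respect to $D_{a,b}$ as the twelve pairwise disjoint components distributed among the two $\mathrm{I}_{10}$ and the two $\mathrm{I}_2$ fibers. By Proposition \ref{insep}$(1)$ each such integral nodal curve $C$ satisfies $C = \pi^*(C')$ and $C^2 = 2C'^2 = -2$, so its image $C'$ is a $(-1)$-curve; blowing these twelve curves down produces a smooth surface $X_{a,b}$. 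One then uses the Euler number formula (\ref{euler}) to get $c_2(X_{a,b}) = 12$ together with numerical triviality of $K_{X_{a,b}}$, so that $X_{a,b}$ is an Enriques surface whose canonical cover $\bar Y_{a,b}$ has exactly the twelve nodes obtained by contracting the twelve integral curves, with minimal resolution $Y$. Whether a member is classical or supersingular is decided by the half-fibers of the induced fibration $f : X_{a,b} \to \mathbf{P}^1$ with fibers $(\mathrm{I}_5, \mathrm{I}_5, \mathrm{I}_1, \mathrm{I}_1)$ via Proposition \ref{multi-fiber}: since the four listed fibers already exhaust $c_2 = 12$, the multiple fibers are smooth, the surface being supersingular exactly at the degenerate parameter value (analogous to $a = 0$ in the $\mathrm{MI}$ case) where the wild half-fiber is a supersingular elliptic curve and classical otherwise; this also shows the family is genuinely $1$-dimensional.

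For the count of nodal curves, the ten sections of $g$ are non-integral, so by Proposition \ref{insep}$(2)$ they descend to ten bi-sections of $f$, each a nodal curve, while the ten components of the two $\mathrm{I}_5$ fibers descend to ten further nodal curves (the $\mathrm{I}_1$ fibers contribute none). This produces at least twenty nodal curves with the dual graph $\Gamma$ of Figure \ref{fano}, which coincides with that of the complex type $\mathrm{VII}$ surface of \cite{Ko}. To see there are \emph{exactly} twenty, I would check that $\Gamma$ is non-degenerate and satisfies the condition of Proposition \ref{Vinberg}: the two $\mathrm{I}_5$ fibers give a parabolic subdiagram $\tilde{A}_4 \oplus \tilde{A}_4$ of maximal rank $8$, and every connected parabolic subdiagram of $\Gamma$ extends to such a maximal one. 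Hence $W(\Gamma)$ has finite index in $\mathrm{O}(\mathrm{Num}(X_{a,b}))$, so the finite polyhedron cut out by the twenty curves is a fundamental domain for $W(X_{a,b})$; any additional nodal curve would furnish an extra wall of the chamber containing an ample class, a contradiction, so the number of nodal curves is exactly twenty.

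Finally, for the automorphism group, the finiteness of $[\mathrm{O}(\mathrm{Num}(X_{a,b})) : W(X_{a,b})]$ forces $\mathrm{Aut}(X_{a,b})$ to be finite, and the group acts on the configuration of twenty nodal curves. I would compute the symmetry group of the dual graph of Figure \ref{fano} and identify it with $\mathfrak{S}_5$. The hard part, and the crux of the theorem, is to promote this combinatorial symmetry to an isomorphism $\mathrm{Aut}(X_{a,b}) \cong \mathfrak{S}_5$. For surjectivity onto the graph symmetries one must realize each symmetry by an automorphism of $X_{a,b}$, either through explicit automorphisms of $Y$ commuting with $D_{a,b}$ or through a lattice-theoretic argument showing that an isometry of $\mathrm{Num}(X_{a,b})$ preserving the nodal chamber and the effective cone is induced by an automorphism. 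Faithfulness holds because the twenty curves span $\mathrm{Num}(X_{a,b}) \otimes \mathbf{Q}$, so an automorphism fixing all of them acts trivially on $\mathrm{Num}(X_{a,b})$ and is the identity. This identification with $\mathfrak{S}_5$ is the main obstacle, since it requires the full lattice-theoretic machinery underlying the classification of Enriques surfaces with finite automorphism group.
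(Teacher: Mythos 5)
The paper does not actually prove this statement: it is imported verbatim from Katsura--Kond\=o \cite{KK2}, and Section \ref{B} only recalls the construction (the fibration (\ref{eqVII}) with fibers $({\rm I}_{10},{\rm I}_{10},{\rm I}_2,{\rm I}_2)$, the derivation $D_{a,b}$, the twelve integral curves, the blow-down to $X_{a,b}$, and the count $10+10=20$ of nodal curves). Your reconstruction of that construction --- $2$-closedness and divisoriality of $D_{a,b}$, the formulas (\ref{canonical}) and (\ref{euler}), Proposition \ref{insep} to turn the twelve integral nodal curves into $(-1)$-curves, identification of $Y$ via Theorem \ref{ElkiesSchutt2}, and the classical/supersingular dichotomy via Proposition \ref{multi-fiber} --- is consistent with the paper's sketch and with the parallel argument for type MI, so that part is sound.

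The genuine gap is the final assertion ${\rm Aut}(X_{a,b})\cong\mathfrak{S}_5$, which you describe as a task rather than carry out. Concretely: (i) surjectivity onto the symmetries of the graph in Figure \ref{fano} is left entirely open --- the Torelli-type statement you would want (``an isometry of ${\rm Num}(X)$ preserving the nodal chamber and the effective cone is induced by an automorphism'') is not available for Enriques surfaces in characteristic $2$, so one must exhibit the automorphisms geometrically, e.g.\ as descents of explicit automorphisms of $Y$ normalizing $D_{a,b}$; and (ii) your injectivity argument --- an automorphism fixing the twenty curves acts trivially on ${\rm Num}(X_{a,b})$ and hence ``is the identity'' --- is not valid as stated, because numerically trivial automorphisms of Enriques surfaces do exist and are an issue precisely for classical and supersingular Enriques surfaces in characteristic $2$; triviality of the kernel of ${\rm Aut}(X)\to{\rm O}({\rm Num}(X))$ needs a separate geometric argument. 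A smaller looseness: ``any additional nodal curve would furnish an extra wall \dots, a contradiction'' is not yet a contradiction, since the polyhedron $K(\Gamma)$ cut out by the twenty curves a priori only contains the nef cone; to get \emph{exactly} twenty one must verify that no further $(-2)$-vector meets all twenty non-negatively, which is a finite check once $K(\Gamma)$ is known to be a finite polyhedron by Proposition \ref{Vinberg}.
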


\begin{proposition}{\rm (Katsura and Kondo \cite[Figure 2 and the subsequent arguments]{KK2})}
There exist exactly four types of elliptic fibrations on $X_{a,b}$ as follows$:$
$$({\rm I}_5, {\rm I}_5, {\rm I}_1, {\rm I}_1),\ ({\rm I}_6, 2{\rm IV}, {\rm I}_2),\ ({\rm I}_9, {\rm I}_1, {\rm I}_1, {\rm I}_1),\ ({\rm I}_8, 2{\rm III}).$$ 
All elliptic fibrations are special.
\end{proposition}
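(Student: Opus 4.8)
The plan is to combine Vinberg's criterion with the uniqueness of genus-$1$ fibrations on the canonical cover $Y$. Concretely, I would enumerate elliptic fibrations on $X_{a,b}$ through the maximal parabolic subdiagrams of the dual graph $\Gamma$ of its $20$ nodal curves, and then pin down each fiber configuration by lifting to $Y$ and matching against the Elkies--Sch\"utt list in Theorem \ref{ElkiesSchutt2}.

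Since $\Num(X_{a,b})$ is the even unimodular lattice of signature $(1,9)$ and the $20$ nodal curves satisfy the hypotheses of Proposition \ref{Vinberg} (they generate $\Num(X_{a,b})\otimes\mathbf{Q}$, so $\Gamma$ is non-degenerate, cf. Remark \ref{Vinbergremark}), the reflection group $W(\Gamma)$ has finite index in $\O(\Num(X_{a,b}))$ and its fundamental domain $P$ is a finite polyhedron on which $\Aut(X_{a,b})\cong\mathfrak{S}_5$ acts by symmetries. An elliptic fibration on $X_{a,b}$ is given by a primitive nef isotropic class $f_0\in\Num(X_{a,b})$; because $X_{a,b}$ has exactly these $20$ nodal curves, $W(X_{a,b})=W(\Gamma)$, so every such $f_0$ is $W(\Gamma)$-equivalent to one in $\overline{P}$. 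Hence isomorphism classes of elliptic fibrations correspond to $\mathfrak{S}_5$-orbits of isotropic rays of $\overline{P}$, and by Vinberg's theory each ray is cut out by a parabolic subdiagram of $\Gamma$ of rank $8$ (the maximal rank $n-1$ with $n=9$), whose connected components are the dual graphs of the reducible fibers. All these fibrations are elliptic rather than quasi-elliptic because $\bar Y_{a,b}$ has only rational double points.

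The combinatorial heart is then to list the rank-$8$ parabolic subdiagrams of $\Gamma$ up to $\mathfrak{S}_5$. Reading these off from Figure \ref{fano} and its completion, I expect exactly four orbits, with connected types
$$\tilde A_4\oplus\tilde A_4,\quad \tilde A_5\oplus\tilde A_2\oplus\tilde A_1,\quad \tilde A_8,\quad \tilde A_7\oplus\tilde A_1,$$
giving reducible fibers $(\I_5,\I_5)$, $(\I_6,\IV,\I_2)$, $(\I_9)$ and $(\I_8,\III)$. To finish each configuration I would (a) identify the multiple fibers via Proposition \ref{multi-fiber}: on the classical members the additive reducible fibers of types $\IV$ and $\III$ are the multiplicity-$2$ fibers $2\IV$ and $2\III$, while in the other two cases both multiple fibers are smooth and invisible in the fiber-type notation; and (b) recover the remaining irreducible fibers by pulling the fibration back along the degree-$2$ cover $\bar\pi:Y\to X_{a,b}$ to a genus-$1$ fibration $g$ on $Y$. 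This $g$ is extremal---the total rank of its reducible fibers equals $20$---hence occurs in Theorem \ref{ElkiesSchutt2}. Matching $(\I_5,\I_5,\dots)$ with $(\I_{10},\I_{10},\I_2,\I_2)$, $(\I_6,2\IV,\I_2)$ with $(\I_{12},\I_4,\IV^*)$, $(\I_9,\dots)$ with $(\I_{18},\I_2,\I_2,\I_2)$, and $(\I_8,2\III)$ with $(\I_{16},\I_1^*)$ then forces the missing fibers to be the listed $\I_1$'s and yields the four types. Finally, for each fibration a nodal curve outside the chosen subdiagram meeting $f_0$ with multiplicity one provides a special bi-section, so every fibration is special.

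The step I expect to be the main obstacle is the completeness of the enumeration: one must verify that $\Gamma$ admits no rank-$8$ parabolic subdiagram outside these four $\mathfrak{S}_5$-orbits, which requires checking how every admissible union of affine $\tilde A$-, $\tilde D$- and $\tilde E$-components can embed in the $20$-vertex graph. The second delicate point is step (b) in characteristic $2$: the Euler-number bookkeeping across the inseparable double cover is not a naive doubling, since a wild multiple fiber (e.g. the $2\III$ lifting to $\I_1^*$) contributes an extra term, and one must use Proposition \ref{multi-fiber} together with the classical/supersingular dichotomy of $X_{a,b}$ to confirm that the lift lands on exactly the predicted member of the Elkies--Sch\"utt list.
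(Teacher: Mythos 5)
Your overall strategy --- enumerating the maximal (rank $8$) parabolic subdiagrams of the dual graph of the $20$ nodal curves and then pinning down each configuration by lifting to $Y$ and invoking the uniqueness in Theorem \ref{ElkiesSchutt2} --- is exactly the method this paper uses for the analogous statements (see the proof of Proposition \ref{sing-fibers} for type ${\rm MII}$ and the discussion preceding Proposition \ref{MIfibrations}), and your four subdiagrams and the four matchings $({\rm I}_5,{\rm I}_5,\ldots)\leftrightarrow({\rm I}_{10},{\rm I}_{10},{\rm I}_2,{\rm I}_2)$, etc., are the correct ones.

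There is, however, a genuine gap in your step (a) on multiplicities. Proposition \ref{multi-fiber} only says that a multiple fiber of a classical Enriques surface is an ordinary elliptic curve or a fiber of additive type; it does \emph{not} say that a reducible additive fiber must be multiple, and this converse fails elsewhere in the same paper: on the surfaces of type ${\rm MII}$ the fibrations $({\rm I}_4,{\rm I}_4,{\rm III})$ and $({\rm I}_6,{\rm IV},{\rm I}_2)$ have \emph{non-multiple} fibers of type ${\rm III}$ and ${\rm IV}$ (Lemmas \ref{possiblebisection10}, \ref{possiblebisection11}), the two tame multiple fibers being smooth. So ``additive and reducible, hence multiple'' is not a valid inference, and it also says nothing about the supersingular members $a=0$, which have a single wild multiple fiber. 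The argument that actually works, and is the one carried out in Section \ref{sec5} (Lemmas \ref{possiblebisection2} and \ref{possiblebisection3}), is to determine which components of the lifted fiber (${\rm IV}^*$, resp.\ ${\rm I}_1^*$) are integral for the derivation: their images are the canonical points on the fiber of type ${\rm IV}$, resp.\ ${\rm III}$; by Lemma \ref{nodal} a special bi-section passes through exactly one of them and meets the reduced fiber there transversally, so, since a bi-section meets a general fiber with multiplicity two, that fiber must be multiple. You would need to insert this canonical-point analysis (or, equivalently, the explicit intersection numbers of the ten bi-sections with the fibers of $g$ defined by equation (\ref{eqVII}), which is what Figure 2 of \cite{KK2} records) to justify the $2{\rm IV}$ and $2{\rm III}$ appearing in the statement. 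The rest of your plan, including the verification of specialness by exhibiting a nodal curve meeting the half-fiber class with multiplicity one, is sound.
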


\begin{lemma}\label{bi-section2} 
Let $f:X_{a,b}\to {\bf P}^1$ be an elliptic fibration.
\noindent
\begin{itemize}
\item[$(1)$] In case that $f$ is of type $({\rm I}_5, {\rm I}_5, {\rm I}_1, {\rm I}_1)$, the following two cases occur.
Either a special bi-section passes through a singular point of two fibers of type ${\rm I}_5$ and is
tangent to the other two singular fibers at a simple point, or it passes through a singular point of two fibers of type ${\rm I}_1$ and is tangent to the other two singular fibers at a simple point.
\item[$(2)$] In case of $({\rm I}_6,  2{\rm IV}, {\rm I}_2)$, any special bi-section passes 
through a singular point of 
the fiber of type ${\rm I}_6$ and is tangent to the fiber of type ${\rm I}_2$ at a simple point.  
There are four canonical points on the multiple fiber of type ${\rm IV}$ one of them is the singular point of the fiber and others are simple points on each component.  Any special bi-section passes through 
a canonical point on the multiple fiber. 
\item[$(3)$] In case of type $({\rm I}_9, {\rm I}_1, {\rm I}_1, {\rm I}_1)$, the following two cases occur.
Either a special bi-section passes through a singular point of 
a fiber of type ${\rm I}_9$ and a singular point of a fiber of type ${\rm I}_1$, and is tangent to
the other two fibers at a simple point, or it passes through the singular point of two fibers of 
type ${\rm I}_1$ and is tangent to the other two fibers at a simple point.
\item[$(4)$] In case of type $({\rm I}_8, 2{\rm III})$, any special bi-section passes through 
a singular point of the fiber of type ${\rm I}_8$.  There are four canonical points on the multiple fiber of type ${\rm III}$.  Each component of the multiple fiber contains two canonical points both of which are simple points of the fiber.  Any special bi-section passes through a canonical point on the multiple fiber. 
\end{itemize}
\end{lemma}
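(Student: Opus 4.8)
The approach follows the pattern of the proof of Lemma \ref{bi-section1}. In each of the four cases I would first pass to the elliptic fibration $g : Y \to {\bf P}^1$ induced by $f$ on the supersingular $K3$ cover; recall from the construction in \S\ref{B} that $f$ and $g$ are related by the Frobenius base change $s=t^2$, under which each multiplicative fiber ${\rm I}_n$ of $f$ doubles to ${\rm I}_{2n}$, while a multiple fiber supported on a singular fiber lifts to an additive one. The type of $g$ is then pinned down among the eight of Theorem \ref{ElkiesSchutt2} by its multiplicative fibers: the four fibrations lift respectively to $({\rm I}_{10},{\rm I}_{10},{\rm I}_2,{\rm I}_2)$, $({\rm I}_{12},{\rm I}_4,{\rm IV}^*)$, $({\rm I}_{18},{\rm I}_2,{\rm I}_2,{\rm I}_2)$ and $({\rm I}_{16},{\rm I}_1^*)$, each having a section by Theorem \ref{ElkiesSchutt}. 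By Lemma \ref{nodal} a special bi-section $s$ is a nodal curve, the curve $\tilde s$ on $Y$ lying over it is again a nodal curve mapping birationally to $s$, and $s$ passes through exactly two canonical points of $X$; as in the proof of Lemma \ref{bi-section1}, $\tilde s$ is a section of $g$.

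The next step is to read off the location of the twelve canonical points, which are the images of the twelve integral curves contracted in passing from $Y$ to $X$. Along a multiplicative fiber the integral components are the alternating components of the cycle: by Proposition \ref{insep} each of them becomes a $(-1)$-curve on $Y^{D_{a,b}}$, and contracting the five such components of an ${\rm I}_{10}$ (resp. the six of ${\rm I}_{12}$, the one of each ${\rm I}_2$, the nine of ${\rm I}_{18}$, the eight of ${\rm I}_{16}$) produces the corresponding fiber ${\rm I}_5$ (resp. ${\rm I}_6$, ${\rm I}_1$, ${\rm I}_9$, ${\rm I}_8$) of $f$, whose nodes are precisely these canonical points. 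For the additive fiber lying over the image of the multiple fiber, the integral curves are components of ${\rm IV}^*$, resp. ${\rm I}_1^*$, and their images account for the four canonical points asserted to lie on $2{\rm IV}$, resp. $2{\rm III}$. A short bookkeeping check then shows the twelve points are distributed as in the statement.

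Now the crucial observation is that, being a section, $\tilde s$ meets every fiber of $g$ in a single point lying on a multiplicity-one component. Hence $s$ passes through the node of an ${\rm I}_m$-fiber of $f$ exactly when $\tilde s$ meets one of the alternating (integral) components of the covering ${\rm I}_{2m}$, and otherwise, by the multiplicity-$2$ computation of Proposition \ref{insep}, $s$ is tangent to that fiber at a smooth point. Every multiplicity-one component of the additive fibers ${\rm I}_1^*$ and ${\rm IV}^*$ is an integral curve, so $\tilde s$ necessarily meets one of them; this forces the special bi-section to pass through a canonical point of the multiple fiber in cases $(2)$ and $(4)$. In case $(4)$ the fibration $f$ has only the two fibers ${\rm I}_8$ and $2{\rm III}$, so the second canonical point is then forced onto the ${\rm I}_8$, giving the assertion immediately.

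The main obstacle lies in the remaining cases, where $f$ has three or four singular fibers and the numerical count of two canonical points does not by itself decide on which fiber the second point lies: one must establish the stated dichotomies in $(1)$ and $(3)$ and show in $(2)$ that the second point lies on the ${\rm I}_6$ rather than on the ${\rm I}_2$. I would settle this using the explicit Mordell--Weil group of $g$. By the Shioda--Tate formula each of the four lifted fibrations has Mordell--Weil rank $0$, so its sections form a finite group, and the component of each reducible fiber met by a given section is recorded by the image of that section in the product of the component groups. Computing these images and reading off whether the corresponding components are integral (alternating) then yields exactly the incidence patterns $(1)$--$(4)$, completing the proof as in Lemma \ref{bi-section1}.
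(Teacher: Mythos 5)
Your proposal is correct and takes essentially the same route as the paper: lift each special bi-section to a section of the induced fibration on $Y$, invoke Lemma \ref{nodal} to see that it meets exactly two of the twelve contracted integral curves, and decide ``passes through a node'' versus ``tangent at a simple point'' according to whether the fiber component met is integral (via Proposition \ref{insep}), with the remaining case distinctions settled by explicit knowledge of how the sections meet the fiber components. The only organizational difference is that the paper extracts all of this at once from the single defining fibration of type $({\rm I}_{10},{\rm I}_{10},{\rm I}_2,{\rm I}_2)$ and its ten sections computed in Katsura--Kond\=o \cite[\S 3]{KK2}, which already determines the full dual graph of the $20$ nodal curves, whereas you recompute the incidence data fibration-by-fibration through Mordell--Weil and component groups.
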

\begin{proof}
The surface $X_{a,b}$ has exactly 20 nodal curves.  
Recall that the elliptic fibration $f:X_{a,b}\to {\bf P}^1$ has two singular fibers of
type ${\rm I}_5$ and ten bi-sections, and the above 20 nodal curves are exactly the components of
singular fibers and bi-sections of $f$.  
The fibration $g: Y \to {\bf P}^1$ inducing $f$ is defined by
the equation (\ref{eqVII}).  We know the intersection relations between the fibers and sections of $g$ explicitly (see Katsura and Kondo \cite[\S 3]{KK2}).  Thus we have the assertions.
\end{proof}

\subsection{Enriques surfaces of type ${\rm MII}$}\label{C}
Consider a line $\ell$ in ${\bf P}^2({\bf F}_4)$ and denote by $p_1,..., p_5$ the five ${\bf F}_4$-rational points on $\ell$.  For $i=1,2$, let $\ell_{ij}$ $(j=1,...,4)$ be the four lines 
in ${\bf P}^2({\bf F}_4)$ through $p_i$ except $\ell$
(see Figure \ref{A7-1}).

\begin{figure}[!htb]
 \begin{center}
  \includegraphics[width=40mm]{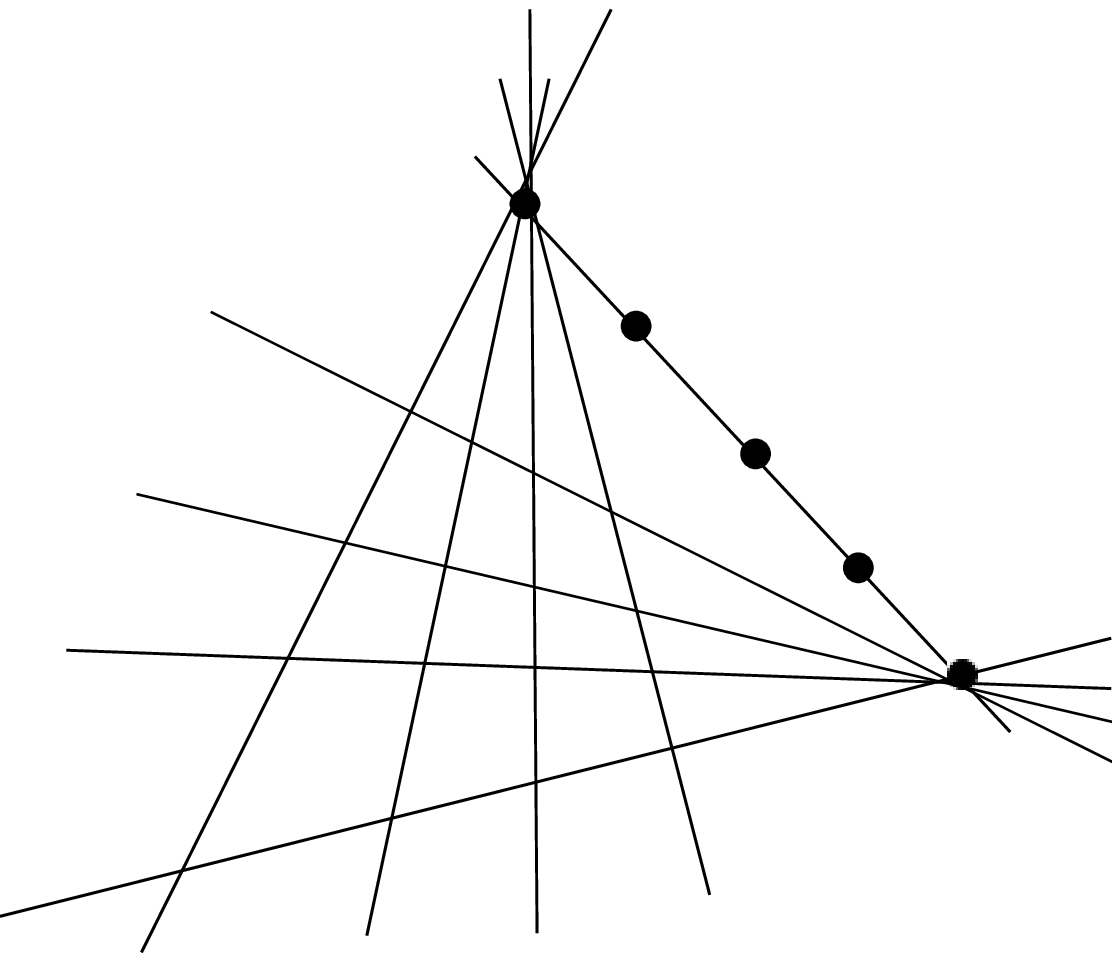}
 \end{center}
 \caption{}
 \label{A7-1}
\end{figure}

Let $Y$ be the supersingular $K3$ surface with Artin invariant 1.  Recall that $Y$ contains 42 nodal curves which are the pullbacks of the 21 lines on ${\bf P}^2({\bf F}_4)$ and the 21 exceptional curves
over the 21 points on ${\bf P}^2({\bf F}_4)$.
Let $L, L_{ij}$ be the proper transforms of $\ell, \ell_{ij}$ on $Y$.  Also denote by $E_i$ the exceptional curve over the point $p_i$ $(i=1,...,5)$.
Let $\bar{Y}$ be the surface obtained by contracting $L_{ij}$ $(i=1,2, j=1,\ldots, 4)$, $L, E_3, E_4, E_5$ which has eight rational double points of type $A_1$ and one rational double point of type $D_4$.  We shall give classical Enriques surfaces $X=X_{a,b}$ whose canonical covers are $\bar{Y}$.  
The surfaces $X$ contain 28 nodal curves as in Figure \ref{A7-2}.

\begin{figure}[!htb]
 \begin{center}
  \includegraphics[width=50mm]{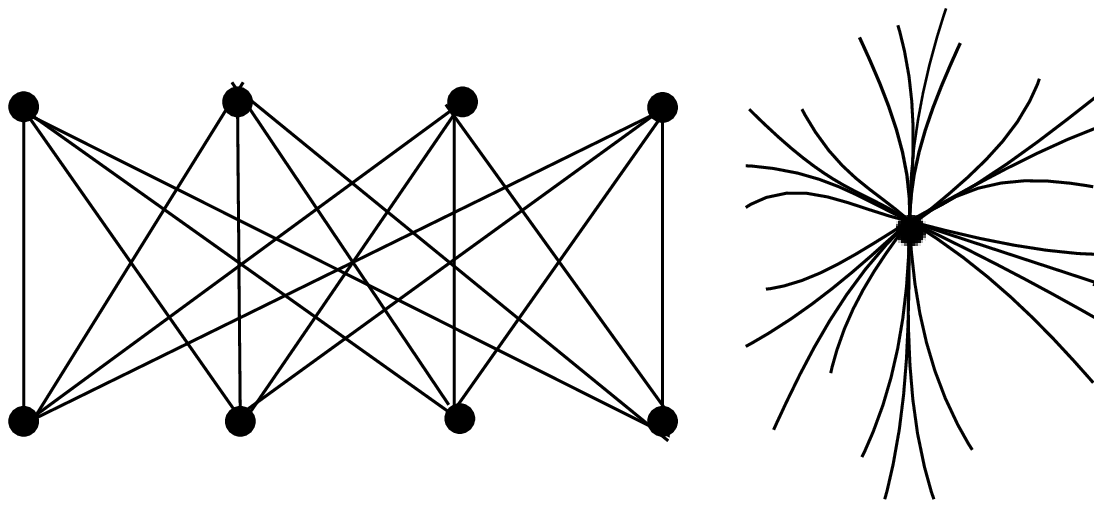}
 \end{center}
 \caption{}
 \label{A7-2}
\end{figure}

\noindent
Among 28 nodal curves in Figure \ref{A7-2}, sixteen of them are
the images of the sixteen exceptional curves $E_{ij}$ on $Y$ over the sixteen intersection points of $\ell_{1i}$ and $\ell_{2j}$, and twelve of them are the images of the twelve lines on 
${\bf P}^2({\bf F}_4)$ through $p_3$, $p_4$ or $p_5$.  The 16 straight lines on the left hand side in
Figure \ref{A7-2} denote these 16 nodal curves and the eight black circles denote eight canonical points which are the images of eight rational double points of type $A_1$.  The 16 nodal curves meet  
only at the 8 canonical points.  On the right hand side in Figure
\ref{A7-2} twelve curves denote the twelve nodal curves.  All these twelve curves
pass through the black circle corresponding to the canonical point which is the image of the 
rational double point of type $D_4$.  
These twelve nodal curves are divided into three groups each of which consist of four nodal curves
tangent each other.

To construct $X$ we use an elliptic fibration $g:Y\to {\bf P}^1$ defined by
\begin{equation}\label{ell-fibMII}
y^2 + xy + t^2(t+1)^2y = x^3 + t^2(t+1)^2x^2
\end{equation}
which is the Frobenius base change $s=t^2$ of a rational elliptic surface
defined by
$$y^2 + xy + s(s+1)y = x^3 + s(s+1)x^2.$$
The fibration $g$ 
has two singular fibers of type ${\rm I}_8$ over $t=0,1$ and one of type ${\rm I}_1^*$ over $t=\infty$.  This elliptic fibration is realized by the linear system
$$|L_{11} + E_{11} +L_{21}+E_{21}+L_{12}+E_{22}+L_{22}+E_{12}|.$$
The other singular fibers are given by the divisors
$$L_{13} + E_{33} +L_{23}+E_{43}+L_{14}+E_{44}+L_{24}+E_{34}$$
and
$$E_3 + E_4 + 2(L + E_5) + F_1 + F_2$$
where $F_1, F_2$ are proper transforms of some lines passing through $p_5$.
Thus the linear system defines an elliptic fibration on $Y$ with singular fibers of type 
$({\rm I}_8,{\rm I}_8,{\rm I}_1^*)$.
By the uniqueness of elliptic fibrations on $Y$ (Theorem \ref{ElkiesSchutt}), we may assume
that the fibration defined by the linear system is the one $g:Y\to {\bf P}^1$ given by (\ref{ell-fibMII}).

Now consider a rational derivation $D_{a,b}$ on $Y$ induced by
$$
{1\over abt(t+1)}\left(t(t + 1)(at+1)(bt+1)\frac{\partial}{\partial t} + (x + t^2(t+1)^2)\frac{\partial}{\partial x}\right)
$$
where $a, b \in k^*,\ a+b=ab$ (the author gave a derivation (the case $a=\omega, b=\omega^2$) and later Yuya Matsumoto pointed out the existence of derivations of this type).
Obviously $D_{a,b}$ has poles of order 1 along the fibers over the points $t=0, 1$ and
the fibers over the points defined by $t=0,1, 1/a, 1/b$ are integral with respect to $D_{a,b}$.
We resolve the singularities of the surface defined by the equation (\ref{ell-fibMII}).
Then we calculate the divisorial part of the induced derivation, denoted by the same symbol $D_{a,b}$, 
on $Y$ and determine integral curves on fibers.  These are elementary, but long calculations.
Thus one can prove the following Lemma.

\begin{lemma}\label{MIIDer}

\noindent
\begin{itemize}
\item[$(1)$] $D_{a,b}^2 = D_{a,b}$, namely, $D_{a,b}$ is $2$-closed and of multiplicative type. 
\item[$(2)$] 
On the surface $Y$, the divisorial part of $D_{a,b}$ is given by
$$
(D_{a,b}) = - (L_{11} + L_{12} + L_{21} +  L_{22} + L_{13} +  L_{14}
+  L_{23} + L_{24} +  2(L + E_{3} + E_{4} +  E_{5}))
$$
and $(D_{a,b})^2 = -24$.
\item[$(3)$] The integral curves with respect to
$D_{a,b}$ in the fibers of $g : Y \longrightarrow {\bf P}^{1}$ are the following$:$
two smooth fibers over the points $t=1/a, 1/b$ and
$$L_{11},\ L_{12},\ L_{21},\ L_{22},\ L_{13},\ L_{14},\ L_{23},\ L_{24},\ E_{3},\ E_{4},\ E_{5}.$$
\end{itemize}
\end{lemma}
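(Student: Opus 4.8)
The plan is to verify the three assertions by explicit computation on the Weierstrass model (\ref{ell-fibMII}) and on its resolution $Y$. For (1), since $D_{a,b}^2$ is again a derivation, it suffices to check $D_{a,b}^2 = D_{a,b}$ on the generators $t, x$ of $k(Y) = k(t,x)$. First I would clear the scalar factor and compute the action on $t$: using $a+b = ab$ and $\text{char}\,k = 2$,
$$D_{a,b}(t) = \frac{(at+1)(bt+1)}{ab} = \frac{abt^2 + abt + 1}{ab} = t^2 + t + \frac{1}{ab}.$$
Because $D_{a,b}(t^2) = 2t\,D_{a,b}(t) = 0$ in characteristic $2$, one gets at once $D_{a,b}^2(t) = D_{a,b}(t^2 + t + 1/(ab)) = D_{a,b}(t)$. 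Writing $w = t(t+1)$, so that $D_{a,b}(w) = w + 1/(ab)$ and $D_{a,b}(x) = (x+w^2)/(ab\,w)$, the same computation on $x$ shows that the two fractional terms produced by the Leibniz rule cancel in characteristic $2$, giving $D_{a,b}^2(x) = D_{a,b}(x)$. Hence $D_{a,b}^2 = D_{a,b}$, so $D_{a,b}$ is $2$-closed; and since $D_{a,b}^2 = D_{a,b} \neq 0$ it is of multiplicative type. The hypothesis $a+b=ab$ is precisely what turns the $\partial_t$-coefficient into $t^2+t+\text{const}$ and drives these cancellations.

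For (2) and (3) I would work locally on $Y$. In the affine chart with coordinates $(t,x)$, write $D_{a,b} = f\cdot D_0$ with $f = 1/(ab\,t(t+1))$ and $D_0 = t(t+1)(at+1)(bt+1)\,\partial_t + (x + t^2(t+1)^2)\,\partial_x$. Since the $\partial_t$-coefficient depends only on $t$ while the $\partial_x$-coefficient involves $x$, the field $D_0$ has no common divisorial factor, so on the smooth locus of this chart the divisorial part $(D_{a,b})$ equals $(f)$, namely the negative of the fibers over $t=0$ and $t=1$. The substance of (2) is then the passage to the resolution: I would resolve the rational double points of (\ref{ell-fibMII}) over $t=0,1$ (producing the two $I_8$ fibers) and over $t=\infty$ (producing the $I_1^*$ fiber), rewrite $D_{a,b}$ in each new chart, and read off the order of the scalar factor along every exceptional component. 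This singles out, among the $I_8$ fibers, the eight curves $L_{ij}$ with coefficient $1$ (and none of the $E_{ij}$), and, after the substitution $s = 1/t$ --- under which $\partial_t = s^2\partial_s$ in characteristic $2$ introduces the extra powers of $s$ responsible for the coefficient $2$ --- the contribution $2(L + E_3 + E_4 + E_5)$ over $t=\infty$.

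For (3) I would apply the criterion (Proposition \ref{insep}) that a curve $C = \{u=0\}$ is integral exactly when the reduced field $D_0$ is tangent to it, i.e.\ $D_0(u)$ vanishes on $C$. The smooth fibers over the roots $t = 1/a, 1/b$ of the $\partial_t$-coefficient are immediately integral, since there $D_0$ reduces to the vertical field $(x + t^2(t+1)^2)\,\partial_x$; the components of the fibers over $t=0,1,\infty$ are tested one by one in the charts already set up for (2), and the tangency test retains exactly $L_{11}, \dots, L_{24}, E_3, E_4, E_5$. Finally, with $(D_{a,b})$ in hand, the equality $(D_{a,b})^2 = -24$ is a short intersection computation: the eight $L_{ij}$ are mutually disjoint $(-2)$-curves, contributing $8\cdot(-2) = -16$; the curves $L, E_3, E_4, E_5$ (each with coefficient $2$) are $(-2)$-curves, contributing $4\cdot 2^2\cdot(-2) = -32$; and $L$ meets each of $E_3, E_4, E_5$ once, contributing $2\cdot(2\cdot 2)\cdot 3 = 24$ from the cross terms, for a total of $-24$. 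As a check, feeding $(D_{a,b})^2 = -24$, $c_2(Y) = 24$ and $K_Y \sim 0$ into (\ref{euler}) yields $\deg\langle D_{a,b}\rangle = 0$, so $D_{a,b}$ is divisorial.

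The hard part will be the resolution bookkeeping at the singular fibers, and above all at the $I_1^*$ fiber over $t=\infty$, where the coordinate change and several blow-ups interact. It is there that the coefficient-$2$ behavior and the $D_4$ configuration $\{L, E_3, E_4, E_5\}$ emerge, and there that the fine distinction between components that do and do not appear in $(D_{a,b})$, and that are or are not integral, must be read off correctly from the local expressions for $D_{a,b}$.
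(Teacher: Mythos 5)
Your proposal is correct and follows essentially the same route as the paper, which itself only says that one resolves the singularities of the Weierstrass model (\ref{ell-fibMII}), computes the divisorial part of the induced derivation chart by chart, and determines the integral curves on the fibers --- ``elementary, but long calculations.'' Your explicit verifications of $D_{a,b}^2=D_{a,b}$ and of $(D_{a,b})^2=-24$ are correct (note only that $k(Y)$ is a separable quadratic extension of $k(t,x)$, so checking the identity on $t$ and $x$ still suffices), and the remaining blow-up bookkeeping at the ${\rm I}_8$ and ${\rm I}_1^*$ fibers, which you rightly single out as the hard part, is exactly the computation the paper also leaves unwritten.
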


\begin{lemma}\label{divisorial}
The derivation $D_{a,b}$ is divisorial.
\end{lemma}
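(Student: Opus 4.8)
The plan is to deduce divisoriality purely numerically from the data already assembled in Lemma \ref{MIIDer}, via the Euler-number formula \eqref{euler}. Since $D_{a,b}$ is $2$-closed by Lemma \ref{MIIDer}(1), formula \eqref{euler} applies to it on $Y$. Because $Y$ is a $K3$ surface we have $c_2(Y)=24$ and $K_Y\sim 0$, so the term $\langle K_Y,(D_{a,b})\rangle$ vanishes and \eqref{euler} collapses to
$$24 = \deg\langle D_{a,b}\rangle - (D_{a,b})^2.$$

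First I would substitute the self-intersection number $(D_{a,b})^2=-24$ recorded in Lemma \ref{MIIDer}(2). This immediately gives
$$\deg\langle D_{a,b}\rangle = 24 + (D_{a,b})^2 = 0.$$
Next I would use that $\langle D_{a,b}\rangle$ is an \emph{effective} zero-cycle: by its very construction (Section \ref{sec2}) it is supported on the isolated singular points of $D_{a,b}$, where it is cut out locally by the ideal $(g_i,h_i)$ with $g_i=0$ and $h_i=0$ sharing no common component, so each such point enters with a positive local multiplicity. Hence $\deg\langle D_{a,b}\rangle=0$ forces $\langle D_{a,b}\rangle=0$; that is, $D_{a,b}$ has no isolated singular point, which is exactly the definition of divisorial. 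Equivalently, by Rudakov--Shafarevich \cite[Theorem 1, Corollary]{RS} the quotient $Y^{D_{a,b}}$ is nonsingular, as was already asserted in the discussion preceding Lemma \ref{MIIDer}.

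I do not expect any genuine obstacle at this stage: the entire computational weight of the statement sits in Lemma \ref{MIIDer}, namely the determination of the divisorial part $(D_{a,b})$ and of the number $(D_{a,b})^2=-24$, both of which I am free to assume. Given that single input, divisoriality is a one-line consequence of \eqref{euler} together with the effectivity of $\langle D_{a,b}\rangle$, requiring no case analysis.
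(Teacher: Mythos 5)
Your proposal is correct and follows exactly the paper's own argument: apply the Euler-number formula \eqref{euler} with $c_2(Y)=24$, $K_Y\sim 0$ and $(D_{a,b})^2=-24$ from Lemma \ref{MIIDer}(2) to get $\deg\langle D_{a,b}\rangle=0$, then conclude by effectivity of the zero-cycle. The paper leaves the last effectivity remark implicit, but otherwise the two proofs coincide.
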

\begin{proof}
It follows from the formula (\ref{euler}) and Lemma \ref{MIIDer}, (2) that
$$24 = c_2(Y) = {\rm deg}(\la D_{a,b}\ra) - K_Y\cdot (D_{a,b}) - (D_{a,b})^2 = {\rm deg}(\la D_{a,b}\ra) + 24.$$
Hence ${\rm deg}(\la D_{a,b}\ra) =0$ and the assertion follows.
\end{proof}

It follows from Lemma \ref{divisorial} that the quotient surface $Y^{D_{a,b}}$ is nonsingular.
We denote by $\pi:Y\to Y^{D_{a,b}}$ the quotient map.
By using Lemma \ref{insep} we see that $Y^{D_{a,b}}$ has 
11 exceptional curves of the first kind which are the images of the rational integral curves stated in
Lemma \ref{MIIDer}, (3).
By contracting these curves and then contracting $\pi(L)$ we get a smooth surface $\phi: Y^{D_{a,b}}\to X_{a,b}$.
It follows from the formula (\ref{canonical}) that 
$$0 = K_Y = \pi^*(K_{Y^{D_{a,b}}}) + (D_{a,b}).$$
On the other hand, by construction, we have
$K_{Y^{D_{a,b}}} = \phi^*(K_{X_{a,b}}) + \bar{L} + \bar{L}_{11} + \bar{L}_{12} + \bar{L}_{21} +\bar{L}_{22} + 
\bar{L}_{13} + \bar{L}_{14} + \bar{L}_{23} + \bar{L}_{24} + 2\bar{E}_{3} + 2\bar{E}_{4} + 2\bar{E}_{5}.$
Here, for example, $\bar{L}=\pi(L)$. 
Note that $L$ is not integral and hence $\pi^*(\bar{L})=2L$ (Lemma \ref{insep}).
Combining these two equations and Lemma \ref{MIIDer},(2), (3), we have
$$\pi^*\phi^*K_{X_{a,b}}=0,$$
and hence $K_{X_{a,b}}$ is numerically trivial.  Since $b_2(Y^{D_{a,b}})=b_2(Y)=22$, we have
$$b_2(X_{a,b})= b_2(Y^{D_{a,b}}) - 12 = 10.$$
Thus $X_{a,b}$ is an Enriques surface.

The elliptic fibration $g:Y\to {\bf P}^1$ induces an elliptic fibration $f: X_{a,b} \to {\bf P}^1$
which has two singular fibers of type ${\rm I}_4$ and a singular fiber of type ${\rm III}$ consisting of the images of $F_1$ and  $F_2$.
Since the images of two smooth integral curves stated in Lemma \ref{MIIDer}, (3) are multiple fibers of the elliptic fibration, $X_{a,b}$ is classical.  

Recall that $Y$ contains 42 nodal curves. 
Except $E_1, E_2$, $L$ and the eleven integral nodal curves, the images of the remaining 28 nodal curves
are nodal curves on $X_{a,b}$.  The images of $E_1$, $E_2$ are rational curves with a cusp. 
It is not difficult to see that the configuration of these 28 nodal curves is given as in the Figure \ref{A7-2}.
Thus we conclude:

\begin{theorem}\label{mainMII} The surface $X_{a,b}$ is a classical
Enriques surface whose canonical cover
$\bar{Y}$ has eight nodes and one rational double point of type $D_4$.  The minimal resolution of $\bar{Y}$   is the supersingular 
$K3$ surface $Y$ with Artin invariant $1$.  The surface $X_{a,b}$ contains $28$ nodal curves as in the Figure {\rm \ref{A7-2}}.
\end{theorem}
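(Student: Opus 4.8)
The plan is to assemble the pieces produced by the construction above and then pin down the two genuinely geometric assertions: the singularity type of the canonical cover and the incidence pattern of the $28$ nodal curves. First, by Lemma \ref{divisorial} the derivation $D_{a,b}$ is divisorial, so $Y^{D_{a,b}}$ is smooth and $\pi : Y \to Y^{D_{a,b}}$ is purely inseparable of degree $2$. By Proposition \ref{insep}(1) each of the eleven integral $(-2)$-curves listed in Lemma \ref{MIIDer}(3) has image of self-intersection $-1$; contracting these eleven $(-1)$-curves and then $\pi(L)$ defines $\phi : Y^{D_{a,b}} \to X_{a,b}$. Feeding the canonical bundle formula (\ref{canonical}) together with the ramification data of $\phi$ and Lemma \ref{MIIDer}(2),(3) gives $\pi^*\phi^* K_{X_{a,b}} = 0$, so $K_{X_{a,b}}$ is numerically trivial, and $b_2(X_{a,b}) = b_2(Y^{D_{a,b}}) - 12 = 10$ shows $X_{a,b}$ is an Enriques surface. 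The induced fibration $f$ has two double fibers, the images of the two smooth integral fibers, so Proposition \ref{multi-fiber} forces $X_{a,b}$ to be classical.

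Next I would identify the canonical double cover $\bar\pi : \bar Y \to X_{a,b}$. The composite $Y \to Y^{D_{a,b}} \to X_{a,b}$ contracts exactly the twelve curves lying over the canonical points, namely the eight mutually disjoint $(-2)$-curves $L_{ij}$ together with $L, E_3, E_4, E_5$; hence it factors through the surface $\bar Y$ obtained from $Y$ by contracting these twelve curves, and the resulting finite purely inseparable degree-$2$ map $\bar Y \to X_{a,b}$ is the canonical cover. The singularity types are then read off from the dual graph: the eight disjoint $L_{ij}$ produce eight nodes of type $A_1$, while $L$ together with $E_3, E_4, E_5$ (with $L\cdot E_i = 1$ and the $E_i$ pairwise disjoint) forms a $D_4$ diagram and contracts to one rational double point of type $D_4$. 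Since $\bar Y$ has only rational double points, Cossec and Dolgachev \cite[Theorem 1.3.1]{CD} give that its minimal resolution is a supersingular $K3$ surface; as that resolution is $Y$, it is the supersingular $K3$ surface with Artin invariant $1$.

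Finally I would account for the $28$ nodal curves. Of the $42$ nodal curves on $Y$, the eleven integral ones and $L$ are contracted and $E_1, E_2$ descend to cuspidal rational curves, so by Lemma \ref{nodal} the remaining $42 - 11 - 1 - 2 = 28$ curves map to nodal curves on $X_{a,b}$: the images of the sixteen exceptional curves $E_{ij}$ and of the twelve lines through $p_3, p_4, p_5$. To obtain the precise picture of Figure \ref{A7-2}, I would transport the intersection numbers on $Y$ through $\pi$ using Proposition \ref{insep} and through the blow-downs $\phi$, with Lemma \ref{nodal} controlling how two transversally meeting nodal curves separate on the cover. The hard part will be exactly this bookkeeping: checking that the sixteen curves meet only at the eight $A_1$-canonical points, that the twelve curves all pass through the single $D_4$-canonical point in three tangent groups of four, and in particular tracking which intersections become tangencies as curves descend along the purely inseparable quotient. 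Everything else is a direct assembly of Lemmas \ref{MIIDer} and \ref{divisorial} with the canonical bundle formula.
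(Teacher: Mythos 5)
Your proposal is correct and follows essentially the same route as the paper: divisoriality of $D_{a,b}$ via the Euler-number formula, Proposition \ref{insep} to turn the eleven integral curves into $(-1)$-curves, the canonical bundle formula (\ref{canonical}) combined with the blow-down data to get $K_{X_{a,b}}$ numerically trivial and $b_2=10$, the two smooth integral fibers giving two multiple fibers and hence classicality, and the count $42-11-1-2=28$ with $E_1,E_2$ becoming cuspidal. The paper likewise reads the $A_1^{\oplus 8}\oplus D_4$ singularities off the dual graph of the twelve contracted curves and leaves the verification of Figure \ref{A7-2} as a direct (if tedious) check, so your deferral of that bookkeeping matches the paper's own level of detail.
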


\begin{theorem}\label{MII12}
There are twelve non-effective $(-2)$-divisors on $X_{a,b}$.  The dual graph of the $28$ nodal curves and these $12$ $(-2)$-vectors satisfies the condition in Proposition {\rm \ref{Vinberg}}.  
In particular the reflection subgroup generated by the 
reflections associated with these $40$ $(-2)$-vectors is of finite index in ${\rm O}({\rm Num}(X_{a,b}))$.
\end{theorem}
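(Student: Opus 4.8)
The plan is to follow the strategy used for the surfaces of type ${\rm MI}$ in Theorem~\ref{thmMI}: produce an explicit set $\Delta$ of $40$ $(-2)$-vectors in ${\rm Num}(X_{a,b})$, compute its dual graph $\Gamma$, and then verify the three hypotheses of Vinberg's criterion (Proposition~\ref{Vinberg}). The $28$ nodal curves of Figure~\ref{A7-2} supply $28$ of the vectors, so the first task is to locate the remaining twelve. I would obtain them on the covering $K3$ surface $Y$: among the $168$ $(-4)$-vectors $2h-(E_1+\cdots+E_6)$ of (\ref{168}), select those orthogonal to all twelve curves contracted in the construction of $X_{a,b}$, namely the eleven integral nodal curves of Lemma~\ref{MIIDer}(3) together with $L$ (which is non-integral but is blown down to the center of the $D_4$). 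A direct count in ${\rm NS}(Y)$ should give exactly twelve such vectors, and by the same descent used in the ${\rm MI}$ case their images in ${\rm Num}(X_{a,b})$ are $(-2)$-vectors (the self-intersection is halved, $-4\mapsto -2$, since the covering has degree $2$ and these classes meet the contracted locus trivially). Non-effectivity is then checked directly: an effective $(-2)$-class would have to be supported on the $28$ nodal curves, and the intersection numbers of the twelve new classes with those curves rule this out.

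Having fixed $\Delta$ and $\Gamma$, I would dispose of the two easy hypotheses first. For non-degeneracy I invoke Remark~\ref{Vinbergremark}: it suffices to exhibit an extremal elliptic fibration on $X_{a,b}$ whose reducible-fibre components and one special bi-section all lie in $\Delta$, since these then generate ${\rm Num}(X_{a,b})\otimes{\bf Q}$ and force $K(\Gamma)$ to be strictly convex. Such a fibration is produced from one of the genus one fibrations on $Y$ listed in Theorem~\ref{ElkiesSchutt2} by the quotient construction, exactly as the fibration (\ref{ell-fibMII}) induces the fibration $f:X_{a,b}\to{\bf P}^1$ of type $({\rm I}_4,{\rm I}_4,2{\rm III})$. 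The absence of $m$-tuple lines with $m\geq 3$ is read off from the dual graph: the nodal curves meet one another only at the eight $A_1$- and the one $D_4$-canonical points, either transversally or tangentially, so every edge of $\Gamma$ is a single or a double line, and I would verify the same remains true after adjoining the twelve new vectors.

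The main work, and the main obstacle, is the maximality condition in Proposition~\ref{Vinberg}: every connected parabolic subdiagram of $\Gamma$ must be a connected component of a parabolic subdiagram of rank $8$. My plan is to enumerate the connected parabolic subdiagrams of $\Gamma$ (the affine diagrams $\tilde A_m$, $\tilde D_n$, $\tilde E_k$ it contains) and to match each one with the reducible fibres of an extremal elliptic fibration on $X_{a,b}$. Using the uniqueness of genus one fibrations on $Y$ (Theorem~\ref{ElkiesSchutt}) together with the quotient construction, one first lists all elliptic fibrations on $X_{a,b}$; each extremal fibration then contributes a parabolic subdiagram of the maximal rank $8$ whose connected components are its fibre types, and one checks that every connected affine subdiagram occurring in $\Gamma$ arises as such a component. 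This is precisely the combinatorial verification carried out for type ${\rm MI}$, and it is where the explicit dual graph of Figure~\ref{A7-2}, read against the list of fibrations, is indispensable.

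Once the three hypotheses are confirmed, Proposition~\ref{Vinberg} yields that the reflection group $W(\Gamma)$ generated by the $40$ $(-2)$-vectors has finite index in ${\rm O}({\rm Num}(X_{a,b}))$, which is the final assertion. In this way the existence of the twelve non-effective $(-2)$-classes and the finite-index statement are established together.
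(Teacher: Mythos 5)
Your proposal follows essentially the same route as the paper: the twelve new classes are obtained as the images of the $(-4)$-vectors of (\ref{168}) orthogonal to the twelve contracted curves (equivalently, to the root lattice $D_4\oplus A_1^{\oplus 8}$ of the singularities), counted to be exactly twelve, and the Vinberg condition is then verified by listing the maximal parabolic subdiagrams of the resulting $40$-vertex graph, which the paper matches against the five elliptic fibrations of Proposition~\ref{sing-fibers} just as you propose. The only difference is one of detail: the paper makes the count concrete by translating the orthogonality condition into a condition on six-point sets in ${\bf P}^2({\bf F}_4)$ containing $p_1,p_2$ and avoiding $p_3,p_4,p_5$, whereas you leave the count as a direct computation in ${\rm NS}(Y)$.
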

\begin{proof}
Among the 168 $(-4)$-vectors given in (\ref{168}), 
the desired ones are the images of the divisors perpendicular to the root lattice 
$D_4 \oplus A_1^{\oplus 8}$ generated by
the exceptional curves of the singularities of the canonical cover of $X$. 
Such divisors correspond to six point sets $\calS$ on ${\bf P}^2({\bf F}_4)$ such that $\calS$ contains $p_1, p_2$, does not
contain $p_3, p_4, p_5$ and each line $\ell_{ij}$ passes through 
one member in $\calS \setminus \{p_1, p_2\}$.
%Let $p_{ij}$ be the intersection of the lines $\ell_{1i}$ and $\ell_{2j}$.
We can easily see that there are exactly 12 such sets in general position.
Thus we have 12 $(-2)$-vectors $\{ r_i\}_{i=1}^{12}$ in ${\rm Num}(X_{a,b})$.  Each vector $r_j$ has the intersection multiplicity 1 with exactly 7 vectors among $\{r_i\}$ and 2 with the remaining 4 vectors.
The dual graph of $\{r_i\}$ has two types of maximal parabolic subdiagrams of type $\tilde{A}_1$ and 
$\tilde{A}_2$.
It follows that maximal parabolic subdiagrams of the dual graph of 40 vectors are of type
$$\tilde{A}_3+\tilde{A}_3 + \tilde{A}_1+\tilde{A}_1,\ \tilde{A}_5+\tilde{A}_2+\tilde{A}_1, \ \tilde{A}_7 +\tilde{A}_1,\
\tilde{A}_5 + \tilde{A}_2+ \tilde{A}_1, \ \tilde{A}_2+\tilde{A}_2+\tilde{A}_2+\tilde{A}_2$$
all of which have the maximal rank 8.  Note that there are two cases of type $\tilde{A}_5+\tilde{A}_2+\tilde{A}_1$.  In one of them, $\tilde{A}_2$ consists of three non-effective $(-2)$-vectors and
in another one, all vertices are represented by effective $(-2)$-vectors (see Remark \ref{RemarkMII}). 
\end{proof}

%By the same argument as in the case of type ${\rm MII}$ (see Proposition \ref{MIfibrations}), 
%we have the following Proposition.

\begin{proposition}\label{sing-fibers}
There exist exactly five types of elliptic fibrations on $X_{a,b}$ as follows$:$
$$({\rm I}_4, {\rm I}_4, {\rm III}),\ ({\rm I}_6, {\rm IV}, {\rm I}_2),\ ({\rm I}_8, {\rm III}),\ 
({\rm I}_6, 2{\rm III}),\ (2{\rm IV}, 2{\rm IV}, {\rm IV}).$$
All fibrations are special.
\end{proposition}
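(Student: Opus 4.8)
The plan is to follow the strategy already used for the families of type ${\rm MI}$ and ${\rm VII}$, reducing everything to the combinatorics of the dual graph assembled in the proof of Theorem \ref{MII12}. By that theorem the $40$ $(-2)$-vectors consisting of the $28$ nodal curves of Figure \ref{A7-2} together with the $12$ non-effective $(-2)$-divisors satisfy Vinberg's criterion (Proposition \ref{Vinberg}); hence the cone $K(\Gamma)$ they bound is a fundamental domain for the reflection group $W(\Gamma)$, which is of finite index in ${\rm O}({\rm Num}(X_{a,b}))$. Since the canonical cover $\bar Y$ has only rational double points, $X_{a,b}$ admits no quasi-elliptic fibration (Cossec and Dolgachev \cite[Proposition 5.7.3]{CD}), so every genus one fibration on $X_{a,b}$ is elliptic and is given by a primitive nef isotropic class $F\in {\rm Num}(X_{a,b})$.

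First I would move $F$, by an element of $W(\Gamma)$, into the closure $\overline{K(\Gamma)}$ of the fundamental polyhedron; as in the case of type ${\rm MI}$ (Proposition \ref{MIfibrations}) this passage does not change the isomorphism type of the fibration. The primitive isotropic vectors in $\overline{K(\Gamma)}$ correspond bijectively to the maximal parabolic subdiagrams of $\Gamma$, namely those of rank $8 = {\rm rank}\,{\rm Num}(X_{a,b}) - 2$, the connected components of such a subdiagram being the classes of the components of the reducible fibers. These maximal parabolic subdiagrams have already been listed in the proof of Theorem \ref{MII12}:
$$\tilde A_3 + \tilde A_3 + \tilde A_1 + \tilde A_1,\quad \tilde A_5 + \tilde A_2 + \tilde A_1,\quad \tilde A_7 + \tilde A_1,\quad \tilde A_5 + \tilde A_2 + \tilde A_1,\quad \tilde A_2 + \tilde A_2 + \tilde A_2 + \tilde A_2,$$
the two diagrams of type $\tilde A_5 + \tilde A_2 + \tilde A_1$ being distinguished by whether their $\tilde A_2$ is spanned by non-effective or by effective vectors. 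This bounds the number of types by five, and since each subdiagram is a genuine cusp of $\overline{K(\Gamma)}$ it is realized by an actual nef isotropic class; hence there are exactly five types.

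It then remains to read off the Kodaira type of each reducible fiber. The effective vertices of a component $\tilde A_n$ span the components of that fiber, but this alone does not separate the multiplicative type ${\rm I}_{n+1}$ from the additive types ${\rm III}$ (for $\tilde A_1$) and ${\rm IV}$ (for $\tilde A_2$); to make this distinction I would lift each fibration along $\bar\pi\circ\rho\colon Y\to X_{a,b}$ to a genus one fibration $g\colon Y\to {\bf P}^1$, which by Theorem \ref{ElkiesSchutt} is one of the eight elliptic fibrations of Theorem \ref{ElkiesSchutt2}. Matching the induced configuration of nodal curves on $Y$ against that list determines each fiber uniquely; concretely, the five subdiagrams above lift, in the same order, to
$$({\rm I}_8, {\rm I}_8, {\rm I}_1^*),\quad ({\rm I}_{12}, {\rm I}_3^*),\quad ({\rm I}_{16}, {\rm I}_1^*),\quad ({\rm I}_{12}, {\rm I}_4, {\rm IV}^*),\quad ({\rm IV}^*, {\rm IV}^*, {\rm IV}^*)$$
on $Y$ (the first lift being the one already exhibited in the construction of \S\ref{C}), following the pattern ${\rm I}_{2n}\mapsto {\rm I}_n$ and ${\rm IV}^*\mapsto {\rm IV}$, ${\rm I}_k^*\mapsto {\rm III}$. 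At the same time Proposition \ref{multi-fiber}(1) forces exactly two tame double fibers, each a smooth ordinary elliptic curve or an additive fiber, which locates the multiple fibers: in the two $\tilde A_5 + \tilde A_2 + \tilde A_1$ cases the one with non-effective $\tilde A_2$ gives $({\rm I}_6, 2{\rm III})$ and lifts to $({\rm I}_{12},{\rm I}_3^*)$, while the all-effective one gives $({\rm I}_6, {\rm IV}, {\rm I}_2)$ and lifts to $({\rm I}_{12},{\rm I}_4,{\rm IV}^*)$, and the $\tilde A_2 + \tilde A_2 + \tilde A_2 + \tilde A_2$ diagram gives $(2{\rm IV}, 2{\rm IV}, {\rm IV})$. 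Finally, since $F = 2F_0$ for a half-fiber $F_0$, the pairing $\langle E, F\rangle$ is even for every divisor $E$, so any nodal curve meeting the general fiber is automatically a bi-section; exhibiting one such nodal curve from the dual graph for each of the five classes shows that all the fibrations are special (compare Proposition \ref{Cossec}).

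The step I expect to be the real obstacle is this last identification of fiber types: the lattice-theoretic enumeration already yields the count five, but separating ${\rm I}_{n+1}$ from ${\rm III}$ and ${\rm IV}$, and pinning down which fibers are the two tame multiple fibers, genuinely requires the passage to the canonical cover, the uniqueness of elliptic fibrations on $Y$ (Theorem \ref{ElkiesSchutt}), and the behaviour of integral versus non-integral curves under $\pi$ (Proposition \ref{insep}). The most delicate bookkeeping is in the two $\tilde A_5 + \tilde A_2 + \tilde A_1$ cases, where effectivity of the $\tilde A_2$ is exactly what decides between $2{\rm III}$ and ${\rm IV}+{\rm I}_2$.
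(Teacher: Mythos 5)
Your proposal follows essentially the same route as the paper: elliptic fibrations on $X_{a,b}$ are matched with primitive isotropic classes in the closure of the finite polyhedron cut out by the $40$ $(-2)$-vectors of Theorem \ref{MII12}, hence with the five maximal parabolic subdiagrams listed there; the Kodaira types are then read off by lifting to $Y$ and using the uniqueness and classification of its elliptic fibrations (Theorems \ref{ElkiesSchutt}, \ref{ElkiesSchutt2} and the table after Lemma \ref{possibleSing}), and the multiplicities are settled by the explicit constructions of Example \ref{MIIfibrationExample}. Your identification of the five lifts and of which $\tilde A_5+\tilde A_2+\tilde A_1$ gives $({\rm I}_6,2{\rm III})$ versus $({\rm I}_6,{\rm IV},{\rm I}_2)$ agrees with the paper (cf.\ Remark \ref{RemarkMII}), so the argument is correct.
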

\begin{proof}
The set of elliptic fibrations on $Y$ up to ${\rm Aut}(Y)$ bijectively corresponds to
the set of primitive isotropic vectors in ${\rm NS}(Y)$ contained in the closure of the finite polyhedron
defined by 42 nodal curves and 168 $(-2)$-curves.  It follows that any elliptic 
fibration on $X_{a,b}$ corresponds to a primitive isotropic vector in ${\rm Num}(X_{a,b})$ contained in the closure of the finite polyhedron
defined by $40$ $(-2)$-vectors mentioned in Theorem \ref{MII12}. 
The latter one corresponds to a maximal parabolic
subdiagram of the dual graph of 40 vectors.  
The types of fibers (e.g. type ${\rm III}$ or type ${\rm I}_2$) 
are determined by the classification of
elliptic fibrations after Lemma \ref{possibleSing}. Their multiplicities follows from the 
construction (see the next example \ref{MIIfibrationExample}).
\end{proof}

\begin{example}\label{MIIfibrationExample}
We give examples of elliptic fibrations in Proposition \ref{sing-fibers} and their special bi-sections.
\begin{itemize}
\item[$(1)$] Take a point from $\{p_3,p_4,p_5\}$, for example, $p_3$.  
Let $\ell_3, \ell_3'$ be two lines on ${\bf P}^2({\bf F}_4)$ passing through $p_3$ (see 
Figure \ref{A7-1}).  Denote by $F$, $F'$ the image of $\ell_3$, $\ell_3'$ on $X=X_{a,b}$ respectively.  Then 
$F,F'$ are nodal curves in the right hand side of Figure \ref{A7-2} which form a singular fiber of type ${\rm III}$ of an elliptic fibration.  
Each of the two lines $\ell_3, \ell_3'$ passes four points
in ${\bf P}^2({\bf F}_4)\setminus \{p_1,..., p_5\}$.  
The images of the remaining eight points not lying on
$\ell_3, \ell_3'$ form two singular fibers of type ${\rm I}_4$ of the fibration.  There are two types of special bi-sections.
A nodal curve in the right hand side of Figure \ref{A7-2} not tangent to $F$ is one of them.  Another one is a bi-section tangent to $F$ or $F'$ at a simple point.  For example, the image of an ${\bf F}_4$-rational point on $\ell_3\setminus \{p_3\}$ is such a bi-section. Since a bi-section meets $F+F'$ with multiplicity 2, the fiber of type ${\rm III}$ is not multiple.  Thus we have an elliptic fibration of type $({\rm I}_4, {\rm I}_4, {\rm III})$.
\item[$(2)$] %Take three points from $p_1,...,p_5$, for example, $p_3, p_4, p_5$.
Consider three lines $\ell_3, \ell_4, \ell_5$ on ${\bf P}^2({\bf F}_4)$ such that 
$\ell_i$ passes $p_i$ $(i=3,4,5)$.  We assume that these three lines do not meet at a point.
Denote by $F_i$ the image of $\ell_i$ on $X$.  
Then $F_3, F_4, F_5$ form a singular fiber of type ${\rm IV}$.
Among the 7 points on ${\bf P}^2({\bf F}_4)$ not lying on $\ell, \ell_{3}, \ell_{4}, \ell_{5}$, 
there are six points whose images on $X$ form a singular fiber of type ${\rm I}_6$.  The remaining point
is a component of a singular fiber of type ${\rm I}_2$.
The image of an ${\bf F}_4$-rational point on $\ell_{3}\setminus\{p_3, \ell_3\cap \ell_{4}, \ell_3\cap \ell_{5}\}$ gives a special bi-section.  Since a bisection meets $F_3+F_4+F_5$ with multiplicity 2, 
the fiber of type ${\rm IV}$ is not multiple.  
Thus we have an elliptic fibration with singular fiber of type $({\rm I}_6, {\rm IV}, {\rm I}_2)$.
\item[$(3)$] Take a point from $p_3,p_4,p_5$, for example, $p_3$, and a line $\ell_3$ on ${\bf P}^2({\bf F}_4)$ passing through $p_3$.   
Among the 12 points on ${\bf P}^2({\bf F}_4) \setminus \{\ell, \ell_{3}\}$ there are eight points $q_1,...,q_8$ (not unique) whose images on $X$ form a singular fiber $F$ of type ${\rm I}_8$.  By Proposition \ref{sing-fibers}, this fibration has a singular fiber $F'$ of type ${\rm III}$ or 
$2{\rm III}$.  The image $L_3$ of $\ell_{3}$ on $X$ is a component of $F'$.
Denote by $L_3'$ the remaining component of $F'$.  Then $L_3$ and $L_3'$ are tangent at the point $p_0$
the image of the singular point of type $D_4$ on the canonical covering of $X$ (see Lemma \ref{possibleSing4}, (1)). 
Take a point $p$ of ${\bf P}^2({\bf F}_4) \setminus \{\ell,\ell_3, q_1,...,q_8\}$ and a point $q$ on 
$\ell_{3}\setminus \{p_3\}$.  Consider a line $\ell'$ passing through $p$ and $q$. Then $\ell'$ meets 
$\ell$ at $p_4$ or $p_5$ and passes through exactly one point among the above eight points.  This implies
that the image $s$ of $\ell'$ on $X$ is a bi-section of the fibration. Since $s$ meets both 
$L_3$ and $L_3'$ at
$p_0$ transversally, $F'$ is not multiple.
Thus we have an elliptic fibration with singular fiber of type $({\rm I}_8, {\rm III})$.
\item[$(4)$] Take three points from $p_1,...,p_5$, for example, $p_1, p_2, p_3$.  
Consider a line on ${\bf P}^2({\bf F}_4)$ passing through
$p_i$ $(i=1,2)$, for example, $\ell_{11}, \ell_{21}$.   
Consider the image of the point $\ell_{11}\cap \ell_{21}$ on $X$ and that of the line $\ell_3$ passing through
two points $\ell_{11}\cap \ell_{21}$ and $p_3$.  
These two nodal curves form a singular fiber of type ${\rm III}$ of an elliptic fibration.
The image of an ${\bf F}_4$-rational point on $\ell_{11}\setminus \{p_1, \ell_{11}\cap\ell_{21}\}$ is a bi-section.  Since this bi-section meets the fiber of type ${\rm III}$ with multiplicity 1, 
this fiber is multiple.  The images of six points not lying on $\ell, \ell_{11}, \ell_{21}, \ell_3$
form a singular fiber of type ${\rm I}_6$.   
Thus we have an elliptic fibration of type $({\rm I}_6, 2{\rm III})$.  There are two types of bi-sections.
The above one passes a singular point of the fiber of type ${\rm I}_6$.  Another one is tangent to a component of the fiber of type ${\rm I}_6$.  For example, the image of a line passing through the point 
$p_4$, but not through $\ell_{11}\cap \ell_{21}$ gives such a bi-section.  
Since it passes the canonical point which is the image of 
the rational double point of type $D_4$,
it is tangent to a component of the fiber of type ${\rm I}_6$ by Lemma \ref{nodal}.
\item[$(5)$] %Take three points from $p_1,...,p_5$, for example, $p_3, p_4, p_5$.
Consider three lines $\ell_3, \ell_4, \ell_5$ on ${\bf P}^2({\bf F}_4)$ such that 
$\ell_i$ passes through $p_i$ $(i=3,4,5)$.  We assume that these three lines meet at a point $p$ (compare this with the case $(2)$).   Let $\ell_j$ be the line passing through $p$ and $p_j$ $(j=1,2)$.
Denote by $L_i$ the image of $\ell_i$ on $X$ $(i=3,4,5)$.  
Then $L_3, L_4, L_5$ form a singular fiber $F$ of type ${\rm IV}$ of an elliptic fibration.
On the other hand, the images of the three ${\bf F}_4$-rational points lying on $\ell_{1}\setminus \{p_1, p\}$ form a singular fiber $F_1$ of type ${\rm IV}$ of the elliptic fibration.  Similary
the three ${\bf F}_4$-rational points on $\ell_{2}\setminus \{p_2, p\}$ gives a singular fiber $F_2$ of
type ${\rm IV}$ of the fibration.  The image of a point in ${\bf P}^2({\bf F}_4)\setminus \{\ell\cup \ell_{1}\cup \ell_{2}\}$ gives a bi-section.  Since this bi-section meets $F_1, F_2$ (resp. $F$) with
multiplicity 1 (resp. 2), $F_1, F_2$ are multiple and $F$ is not.
Thus we have an elliptic fibration of type $(2{\rm IV}, 2{\rm IV}, {\rm IV})$.  
\end{itemize}
\end{example}

\begin{remark}\label{RemarkMII}
We remark that the maximal parabolic subdiagrams in the proof of Theorem \ref{MII12} correspond to five types of elliptic fibrations in Proposition \ref{sing-fibers} and
that, in the first, fourth, or fifth case, a parabolic subdiagram of type $\tilde{A}_1$, $\tilde{A}_2$ or $\tilde{A}_2$ respectively contains a non-effective $(-2)$-vector.
\end{remark}

\begin{remark}
The symmetry group of the dual graph of $40$ $(-2)$-vectors is 
$(\mathfrak{S}_4 \times \mathfrak{S}_4)\cdot {\bf Z}/2{\bf Z}$ (see Figure \ref{A7-1}).  
This remarkable diagram of $(-2)$-vectors was first discovered by Shigeru Mukai (unpublished) 
in case of complex Enriques  surfaces.
\end{remark}

Recall that $\Aut(Y)$ is generated by $\PGL(3,{\bf F}_4)$, a switch and 
168 Cremona transformations, where $Y$ is the covering $K3$ surface of $X_{a,b}$ (Dolgachev and Kondo \cite{DK}).
Among these automorphisms, the subgroup $(\mathfrak{S}_4 \times \mathfrak{S}_4)\cdot {\bf Z}/2{\bf Z}$ and the twelve Cremona transformations associated with the twelve divisors stated in the proof of Theorem \ref{MII12} preserve the 12 nodal curves
$$L, L_{11},\ L_{12},\ L_{21},\ L_{22},\ L_{13},\ L_{14},\ L_{23},\ L_{24},\ E_{3},\ E_{4},\ E_{5}$$
contracted under the map $\phi$.

\begin{conjecture}
The subgroup $(\mathfrak{S}_4 \times \mathfrak{S}_4)\cdot {\bf Z}/2{\bf Z}$ and 
the twelve Cremona transformations descend to automorphisms of $X_{a,b}$.
\end{conjecture}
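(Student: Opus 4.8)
The plan is to realise $\Aut(X_{a,b})$ as a subgroup of $\Aut(Y)$ and to pin down exactly which elements descend. By functoriality of the canonical cover, every automorphism of $X_{a,b}$ lifts to $\bar Y$ and hence to its minimal resolution $Y$, giving an embedding $\Aut(X_{a,b})\hookrightarrow \Aut(Y)$. Conversely, an element $g\in\Aut(Y)$ descends to $X_{a,b}$ precisely when: (i) $g$ preserves the twelve curves $L, L_{11},\dots,L_{24},E_3,E_4,E_5$ contracted by $Y\to\bar Y$, respecting their roles as the $D_4$-configuration $\{L,E_3,E_4,E_5\}$ and the eight $A_1$-curves $\{L_{ij}\}$, so that $g$ induces $\bar g\in\Aut(\bar Y)$; and (ii) $\bar g$ is compatible with the purely inseparable covering $\bar\pi\colon\bar Y\to X_{a,b}$, equivalently $g$ preserves the foliation attached to $D_{a,b}$, i.e. $g_*D_{a,b}=f_g\,D_{a,b}$ for some $f_g\in k(Y)^\times$. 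Condition (i) holds for all the automorphisms in the statement by the paragraph preceding the conjecture, so the entire problem reduces to verifying (ii).

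To analyse (ii) I would first transport it to the elliptic fibration $g\colon Y\to\mathbf{P}^1$ of (\ref{ell-fibMII}). Since each listed automorphism preserves the twelve curves, it preserves the fibre class of the $({\rm I}_8,{\rm I}_8,{\rm I}_1^*)$-pencil built from them, hence maps fibres to fibres and induces a Möbius transformation $\tau$ on the base. Because the fibre types are distinct, $\tau$ must fix the ${\rm I}_1^*$-point $t=\infty$ and permute the two ${\rm I}_8$-points $t=0,1$, so $\tau\in\{\mathrm{id},\,t\mapsto t+1\}$. The points $t=1/a,1/b$ carrying the multiple fibres of $X_{a,b}$ satisfy $1/a+1/b=1$ by the relation $a+b=ab$, so they form a pair $\{s,s+1\}$ preserved by both elements of $\{\mathrm{id},\,t\mapsto t+1\}$; moreover $D_{a,b}=D_{b,a}$, so the base-switch $t\mapsto t+1$ corresponds to $(a,b)\mapsto(b,a)$ and leaves $D_{a,b}$ unchanged. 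Thus there is no obstruction coming from the base, and the content of (ii) is purely fibrewise.

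The remaining step is to show that the fibrewise part of each automorphism carries $D_{a,b}$ into a scalar multiple of itself rather than into $D_{a',b'}$ for a different parameter. Here I would use that the divisorial part $(D_{a,b})$ of Lemma \ref{MIIDer}(2) is supported exactly on the twelve contracted curves, with multiplicity $2$ along the $D_4$-curves $L,E_3,E_4,E_5$ and multiplicity $1$ along the eight $A_1$-curves; an automorphism respecting (i) therefore satisfies $(g_*D_{a,b})=(D_{a,b})$ and permutes the integral curves of Lemma \ref{MIIDer}(3). I would then prove that the multiplicative vector fields tangent to $g$ with this fixed divisorial part form precisely the one-parameter family $\{D_{a,b}\}$ parametrised by $s=1/a$, so that $g_*D_{a,b}=D_{a',b'}$ with $s'=1/a'$ determined by $g$. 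Since $\tau$ fixes the pair $\{s,s+1\}$ and the residual fibrewise automorphism cannot move the multiple fibres, one gets $s'=s$, i.e. $g_*D_{a,b}=f_g\,D_{a,b}$, which is (ii).

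The hard part is executing this last step for the \emph{twelve Cremona transformations}, for which no convenient closed form in the coordinates $(t,x,y)$ is available, in contrast to the linear group $(\mathfrak{S}_4\times\mathfrak{S}_4)\cdot\mathbf{Z}/2\mathbf{Z}$ coming from $\PGL(3,\mathbf{F}_4)$ and the switch, where $g_*D_{a,b}=f_g\,D_{a,b}$ can be checked by a direct if lengthy substitution. For the Cremona transformations the viable route is the intrinsic one above: one must verify that $g_*D_{a,b}$ is again a multiplicative derivation with the \emph{same} singular scheme and divisorial part whose quotient is $X_{a,b}$ itself, and then invoke uniqueness of such a derivation. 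Ruling out the possibility that a Cremona transformation only yields an isomorphism $X_{a,b}\cong X_{a',b'}$ with $(a',b')\neq(a,b)$ — equivalently, showing it genuinely fixes the modulus $s=1/a$ — is the crux, and is exactly the point that keeps the assertion at the level of a conjecture.
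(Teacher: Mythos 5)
The statement you are proving is labelled a \emph{conjecture} in the paper: the author offers no proof, so there is no argument to compare yours against. What you have written is a reduction strategy rather than a proof, and you say so yourself in your last sentence. The framework is sound as far as it goes: the lifting $\Aut(X_{a,b})\hookrightarrow\Aut(Y)$ via the canonical cover, the two descent conditions (preservation of the twelve contracted curves, and $g_*D_{a,b}=f_g\,D_{a,b}$), the analysis of the induced M\"obius transformation on the base of the $({\rm I}_8,{\rm I}_8,{\rm I}_1^*)$ fibration, and the observation that $1/a+1/b=1$ together with $D_{a,b}=D_{b,a}$ kills the obstruction coming from the base are all correct and consistent with the paper's setup (Lemma \ref{MIIDer} and the construction in \S\ref{C}).

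The genuine gap is the one you flag: nothing in the proposal establishes that $g_*D_{a,b}$ is proportional to $D_{a,b}$ rather than to $D_{a',b'}$ for some other parameter, i.e.\ that each listed automorphism fixes the modulus $s=1/a$ instead of merely inducing an isomorphism $X_{a,b}\cong X_{a',b'}$ within the one-dimensional family. Your proposed intrinsic route --- characterise $\{D_{a,b}\}$ as the full family of $2$-closed multiplicative vector fields with the prescribed divisorial part and singular scheme, then argue the induced action on the parameter is trivial --- is plausible but is asserted, not carried out; in particular the claim that ``the residual fibrewise automorphism cannot move the multiple fibres'' is exactly what needs proof, since the multiple fibres sit over $t=1/a,1/b$ and a fibrewise automorphism over the identity on the base tautologically fixes them, so the real question is whether the twelve Cremona transformations even preserve the fibration germ near those fibres in a way that forces $s'=s$. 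There is also a smaller unverified point earlier on: that every element of $(\mathfrak{S}_4\times\mathfrak{S}_4)\cdot{\bf Z}/2{\bf Z}$ and each of the twelve Cremona transformations preserves not just the set of twelve contracted curves but the partition into the $D_4$-configuration $\{L,E_3,E_4,E_5\}$ and the eight $A_1$-curves $\{L_{ij}\}$ with the correct multiplicities, which you need in order to conclude $(g_*D_{a,b})=(D_{a,b})$. Until the parameter-fixing step is executed --- or replaced by an argument that the family $\{X_{a,b}\}$ has no nontrivial isomorphisms between distinct members induced by these particular elements --- the statement remains, as the paper says, a conjecture.
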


\begin{lemma}\label{bi-section3} 
There are nine canonical points on $X_{a,b}$.  One of them, denoted by $p_0$, corresponds to the $D_4$-singularity and the others correspond to $A_1$-singularities.  Let $f:X_{a,b}\to {\bf P}^1$ 
be an elliptic fibration.
\noindent
\begin{itemize}
\item[$(1)$] In case that $f$ is of type $({\rm I}_4, {\rm I}_4, {\rm III})$, the canonical point $p_0$ is the singular
point of the fiber of type ${\rm III}$.  For special bi-sections, the following two cases occur. 
Either a special bi-section passes through $p_0$ and is tangent to the other two singular fibers of type 
${\rm I}_4$ at a simple point, or it passes through a singular point of two fibers of type ${\rm I}_4$ and is tangent to the fiber of type ${\rm III}$ at a simple point.
\item[$(2)$] In case of $({\rm I}_6,  {\rm IV}, {\rm I}_2)$, the canonical point $p_0$ is the singular
point of the fiber of type ${\rm IV}$.  Any special bi-section passes through a singular point of 
the fiber of type ${\rm I}_6$ and a singular point of the fiber of type ${\rm I}_2$, and is tangent to 
the fiber of type ${\rm IV}$ at a simple point.
\item[$(3)$] In case of type $({\rm I}_8, {\rm III})$, the canonical point $p_0$ is the singular
point of the fiber of type ${\rm III}$.  Any special bi-section passes through $p_0$ and is tangent to
the fiber of type ${\rm I}_8$ at a simple point.
\item[$(4)$] In case of type $({\rm I}_6, 2{\rm III})$, the canonical point $p_0$ is a simple point of a component of the singular fiber of type $2{\rm III}$.  Two canonical points lie on the other component of the fiber of type $2{\rm III}$ both of which are simple points of the fiber.  
For special bi-sections, the following two cases occur.
A special bi-section passes through $p_0$ and is tangent to the fiber of type 
${\rm I}_6$ at a simple point.  Or a special bi-section
passes through a canonical point on the fiber of type ${\rm III}$ not $p_0$ and a singular point {\rm (}=canonical point{\rm )} on the fiber of type ${\rm I}_6$.
\item[$(5)$] In case of type $(2{\rm IV}, 2{\rm IV}, {\rm IV})$, the canonical point $p_0$ is the singular point of the non-multiple fiber of type ${\rm IV}$.  
The other two singular fibers contain four canonical points.
One of them is the singular point of the fiber and others are simple points of each component.
Any special bi-section passes through a canonical point on two multiple fibers and is tangent to 
the remaining fiber at a simple point. 
\end{itemize}
\end{lemma}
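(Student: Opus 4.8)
The plan is to push everything up to the covering $K3$ surface $Y$ and argue type by type through the five fibrations of Proposition \ref{sing-fibers}, in the same spirit as the proofs of Lemmas \ref{bi-section1} and \ref{bi-section2}. First I would record the nine canonical points concretely: the eight $A_1$-points are the images under $\phi\circ\pi$ of the exceptional curves $E_{ij}$ (equivalently the intersection points of the integral curves $L_{ij}$), while $p_0$ is the image of the $D_4$-point obtained by contracting $L, E_3, E_4, E_5$. Because every nodal curve on $X_{a,b}$ lifts, by Lemma \ref{nodal}, to a nodal curve on $Y$ of degree one over it and is blown up at exactly two points during the resolution, each nodal curve of $X_{a,b}$ — in particular each special bi-section — passes through exactly two canonical points. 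This single constraint, together with the explicit incidence of the $28$ nodal curves in Figure \ref{A7-2}, already restricts the possibilities severely.

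Next, for each type I would identify the induced elliptic fibration $g$ on $Y$ among the eight fibrations of Theorem \ref{ElkiesSchutt2}, using the explicit ${\bf P}^2({\bf F}_4)$-models of Example \ref{MIIfibrationExample}. In the non-multiple cases $({\rm I}_4,{\rm I}_4,{\rm III})$, $({\rm I}_6,{\rm IV},{\rm I}_2)$, $({\rm I}_8,{\rm III})$ and $(2{\rm IV},2{\rm IV},{\rm IV})$ the fibre carrying the whole $D_4$-configuration $L+E_3+E_4+E_5$ is the additive fibre (of type ${\rm III}$, ${\rm IV}$, ${\rm III}$ and ${\rm IV}$ respectively), so after contracting to the $D_4$-point and passing to $X_{a,b}$ the point $p_0$ lands at the singular point of that additive fibre, as asserted. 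The positions of the remaining eight canonical points are then read off as the images of the $A_1$-points lying on the various ${\rm I}_n$-components.

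For the bi-sections I would use that $g$ has a section (Theorem \ref{ElkiesSchutt}) and that the pullback to $Y$ of a special bi-section of $f$ has completely explicit intersection behaviour with the known reducible fibres of $g$. Since $X_{a,b}$ is classical, Proposition \ref{multi-fiber}$(1)$ guarantees that $f$ has exactly two tame multiple fibres of multiplicity two, so the decisive numerical input is the distinction between multiple and non-multiple fibres: a special bi-section $s$ satisfies $s\cdot F=2$ for a non-multiple fibre $F$, but meets a reduced multiple fibre $F_0$ (so that the fibre is $2F_0$) only with $s\cdot F_0=1$, exactly as exploited in Example \ref{MIIfibrationExample}. This both identifies which fibres are multiple — some of them being smooth multiple fibres invisible in the list of Proposition \ref{sing-fibers}, namely the images of the two smooth integral fibres of Lemma \ref{MIIDer}$(3)$ — and pins down how $s$ meets each reducible fibre. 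Combining $s\cdot(\text{fibre})=2$, the fact that $s$ meets two components when it passes through a node of an ${\rm I}_n$-fibre and one component with multiplicity two when it is tangent at a smooth point, and the constraint that $s$ passes through precisely two canonical points, one obtains in each case the stated list of possibilities; where two alternatives are listed (as in $({\rm I}_4,{\rm I}_4,{\rm III})$ and $({\rm I}_6,2{\rm III})$) both are realised by the explicit bi-sections of Example \ref{MIIfibrationExample}.

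The one genuinely delicate point is the multiple case $({\rm I}_6,2{\rm III})$. Here the fibre of type ${\rm III}$ is multiple, so its two components play asymmetric roles: by the construction in Example \ref{MIIfibrationExample}$(4)$ one component is the image of the line $\ell_3$ through $p_3$ (a curve of the right-hand side of Figure \ref{A7-2} passing through $p_0$) and the other is the image of the exceptional curve over $\ell_{11}\cap\ell_{21}$ (a left-hand curve passing through two $A_1$-points). Tracking the contraction of $L,E_3,E_4,E_5$ through the quotient $\pi\colon Y\to Y^{D_{a,b}}$ and the blow-down $\phi$ then shows that $p_0$ becomes a \emph{simple} point of the first component while two further canonical points fall on the second, with the tangency point of the $2{\rm III}$-fibre being no canonical point at all. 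I expect this bookkeeping — correctly matching the $K3$ fibration to the Enriques fibration and following the images of fibre components and canonical points through the two-step purely inseparable quotient — to be the main obstacle; the intersection-number distinction between multiple and non-multiple fibres, via Propositions \ref{multi-fiber} and \ref{insep}, is the tool that resolves it.
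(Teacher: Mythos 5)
Your proposal is correct and follows essentially the same route as the paper: establish existence of each kind of bi-section from Example \ref{MIIfibrationExample}, then lift a special bi-section to a section of the induced fibration $g$ on $Y$ (unique by Theorem \ref{ElkiesSchutt}), locate the canonical points via the known integral/contracted components of each fibre of $g$ (Lemmas \ref{MIIDer} and \ref{possibleSing}), and constrain $s$ by the intersection numbers with multiple versus non-multiple fibres together with the two-canonical-point condition of Lemma \ref{nodal}. The only cosmetic difference is that the paper handles cases (2) and (3) by noting that the Mordell--Weil group of $g$ is torsion, so all special bi-sections are already among the $28$ listed nodal curves, whereas you run the uniform lifting argument in every case; both are valid.
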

\begin{proof}
The existence of bi-sections of given types follows from Example \ref{MIIfibrationExample}.
In cases (2), (3), the elliptic fibration $g$ on $Y$ induced from $f$ has a section and
the Mordell-Weil group is a torsion group.  Any special bi-sections of $g$ is 
one of 28 nodal curves mentioned in Theorem \ref{mainMII}.  Thus we directly prove the assertion.

In the remaining cases, the proof is similar to that of Lemma \ref{bi-section1}.
Any special bi-section of $f$ is a section of $g$.  We know which components of $g$ are integral curves.  From this one can easily check the assertions.
\end{proof}

\section{Possible singularities and singular fibers}\label{sec4}

Let $X$ be an Enriques surface.  Assume that the canonical cover $\bar{\pi} :\bar{Y}\to X$ has
only rational double points and the minimal resolution $Y$ of $\bar{Y}$ is the supersingular $K3$
surface with the Artin invariant 1. 
In this section we determine the possibilities of the singularities of $\bar{Y}$ (Lemma \ref{possibleSing}) and
study elliptic fibrations on $X$ (Lemmas \ref{possibleSing3}, \ref{possibleSing4}, \ref{possibleSing5}).

%We first recall a known result on singularities of $\tilde{X}$.  

\begin{proposition}\label{EHS-sing}
Let $R$ be the lattice generated by exceptional curves of the minimal resolution $Y\to \bar{Y}$ of singularities.  
Then $R$ is one of the following$:$
$$A_1^{\oplus 12},\ A_1^{\oplus 8}\oplus D_4,\ A_1^{\oplus 4}\oplus D_4^{\oplus 2},\ A_1^{\oplus 6}\oplus D_6.$$
\end{proposition}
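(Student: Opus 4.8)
The plan is to combine a rank count inside $\NS(Y)$ with the local classification of the rational double points of the inseparable cover $\bar\pi\colon\bar Y\to X$.

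First I would pin down the rank of $R$. Write $\pi=\bar\pi\circ\rho\colon Y\to X$ for the composite of the resolution $\rho\colon Y\to\bar Y$ and the canonical cover. Pullback gives $\pi^*\colon\Num(X)\to\NS(Y)$. Both factors induce isomorphisms on N\'eron--Severi groups after $\otimes\,{\bf Q}$: $\bar\pi$ because it is finite surjective of degree $2$ (so $\bar\pi_*\bar\pi^*$ is multiplication by $2$), and $\rho$ because it is a resolution. For every exceptional curve $E$ and every $x\in\Num(X)$ one has $\langle\pi^*x,E\rangle=\langle x,\pi_*E\rangle=0$, so $\pi^*\Num(X)$ is orthogonal to $R$; and since $\rho^*\NS(\bar Y)$ together with the exceptional curves span $\NS(Y)\otimes{\bf Q}$, we obtain an orthogonal decomposition $\NS(Y)\otimes{\bf Q}=(\pi^*\Num(X)\otimes{\bf Q})\oplus(R\otimes{\bf Q})$. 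As $\mathrm{rank}\,\NS(Y)=22$ and $\mathrm{rank}\,\Num(X)=10$, this forces $\mathrm{rank}\,R=12$. This step I am completely confident about, and it already shows each lattice in the list has the correct rank.

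Next I would restrict the individual singularity types. The singular points of $\bar Y$ are the canonical points, i.e. the zeros of the regular $1$-form $\eta$, and they account for $c_2(X)=12$ counted with multiplicity. Since $\bar\pi$ is locally a purely inseparable $\mu_2$- or $\alpha_2$-cover, the rational double points that can occur are severely constrained: by the classification of such double points in characteristic $2$, together with the analysis of Ekedahl--Hyland--Shepherd-Barron, they are of type $A_1$ or $D_{2k}$. Writing $a$ for the number of $A_1$ points and letting the remaining points carry $D$-type singularities, the rank identity of the first step becomes $a+\sum(\text{ranks of the }D\text{-components})=12$.

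Finally I would cut the surviving configurations down to the four stated lattices, and this is where I expect the real work to lie. The candidates left after the first two steps — for instance $A_1^{\oplus4}\oplus D_8$, $D_{12}$, $D_4^{\oplus3}$, $D_6^{\oplus2}$, and $A_1^{\oplus2}\oplus D_4\oplus D_6$ — all have rank $12$ and use only admissible local types, so they cannot be excluded by rank alone. They must be ruled out by the requirement that $R$ embed primitively in the \emph{specific} lattice $\NS(Y)$, whose discriminant group is $({\bf Z}/2{\bf Z})^2$ and whose orthogonal complement $R^\perp$ must contain $\pi^*\Num(X)\cong\Num(X)(2)=U(2)\oplus E_8(2)$. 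Concretely, one glues $R\oplus R^\perp$ up to $\NS(Y)$ and matches discriminant forms via Nikulin's theory; the main obstacle is that a crude order or length count does not separate the spurious configurations from the admissible ones, so one must use the precise $2$-elementary discriminant form of $\NS(Y)$ for the Artin invariant $1$ surface, and check realizability of each configuration as a disjoint union of $(-2)$-curves among the $42$ nodal curves on $Y$. I expect this discriminant-form analysis to eliminate every remaining possibility except $A_1^{\oplus12}$, $A_1^{\oplus8}\oplus D_4$, $A_1^{\oplus4}\oplus D_4^{\oplus2}$, and $A_1^{\oplus6}\oplus D_6$.
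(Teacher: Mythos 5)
Your first two steps match the paper: the rank count giving $\mathrm{rank}\,R=12$ is correct, and the restriction on the local types is exactly the content of Ekedahl--Hyland--Shepherd-Barron (Lemma 6.5 of \cite{EHS}, which the paper cites; note that it allows $E_7$ and $E_8$ in addition to $A_1$ and $D_{2n}$, so those must be excluded as well). The gap is in your third step, which you describe but do not carry out, and your prediction of how it should go points in the wrong direction. You assert that ``a crude order or length count does not separate the spurious configurations from the admissible ones'' and that one must instead match precise discriminant forms and check realizability of each configuration among the $42$ nodal curves on $Y$. In fact the paper's elimination \emph{is} a length count, and no geometric realizability check is needed (nor may you assume the exceptional curves lie among those $42$ curves). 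Concretely: let $\bar R$ be the primitive closure of $R$ in $\mathrm{Pic}(Y)$, so that $\bar R$ is the orthogonal complement of $\pi^*\mathrm{Num}(X)\cong E_{10}(2)$, whose discriminant group has length $10$. All admissible components are $2$-elementary, hence so is $\bar R$; write $R^*/R\cong({\bf Z}/2{\bf Z})^a$ and $\bar R^*/\bar R\cong({\bf Z}/2{\bf Z})^{a'}$ with $a'\le a$. The glue group $H=\mathrm{Pic}(Y)/(E_{10}(2)\oplus\bar R)$ satisfies $2^{10+a'}=|\det(\mathrm{Pic}(Y))|\cdot|H|^2=2^2\cdot|H|^2$ and embeds into $\bar R^*/\bar R$, whence $2^{4+a'/2}\le 2^{a'}$, i.e. $a\ge a'\ge 8$. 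Every spurious rank-$12$ configuration you list ($A_1^{\oplus4}\oplus D_8$, $D_{12}$, $D_4^{\oplus3}$, $D_6^{\oplus2}$, $A_1^{\oplus2}\oplus D_4\oplus D_6$, and likewise anything containing $E_7$ or $E_8$) has $a\le 6$ and is killed by this one inequality; only the four stated lattices survive. Without this computation, or an equivalent substitute, your proposal does not establish the proposition.
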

\begin{proof}
Denote by $\bar{R}$ the primitive sublattice in ${\rm Pic}(Y)$ containing $R$ of finite index.
Then $\bar{R}$ is the orthogonal complement of $\pi^*({\rm Pic}(X)) \cong E_{10}(2)$ in ${\rm Pic}(Y)$.
Since $\rho(Y)=22$, ${\rm rank}(R) =12$.  
By Ekedahl, Hyland, Shepherd-Barron \cite{EHS}, Lemma 6.5, $R$ is the direct sum of root lattices 
of type $A_1$, $D_{2n}$, $E_7$, $E_8$.  Note that these root lattices are 2-elementary, that is,
$R^*/R$ is a 2-elementary abelian group.  Since $\bar{R}$ is an over lattice of $R$, $\bar{R}$ is also 2-elementary (Nikulin \cite{N}, Proposition 1.4.1).  Assume that $R^*/R \cong ({\bf Z}/2{\bf Z})^a$ and
$\bar{R}^*/\bar{R} \cong ({\bf Z}/2{\bf Z})^{a'}$.  Then $a' \leq a$ (Nikulin \cite{N}, Proposition 1.4.1).     
Denote by $H$ the quotient group ${\rm Pic}(Y)/(E_{10}(2) \oplus \bar{R})$.
It follows from Nikulin \cite{N}, Proposition 1.5.1 that
$$2^{10+a'} = |{\rm det}(E_{10}(2) \oplus \bar{R})|= |{\rm det}({\rm Pic}(Y))| \cdot |H|^2 = 2^2 \cdot |H|^2.$$
Hence we have
$|H| = 2^{4 + a'/2}$.  Since $H$ is embedded into $\bar{R}^*/\bar{R}$ 
(Nikulin \cite{N}, Proposition 1.5.1),
we have $2^{4+a'/2} \leq 2^{a'}$, 
and hence $8 \leq a' \leq a$.  
Now the assertion follows from the classification of root lattices of rank 12.
\end{proof}

Let $f : X \to {\bf P}^1$ be an elliptic fibration on $X$ and denote by $g: Y \to {\bf P}^1$ the
induced elliptic fibration on $Y$.  It follows that $g$ is one of eight elliptic fibrations given in Theorem \ref{ElkiesSchutt2}.  
%Recall that all fibration have a section.

\begin{lemma}\label{possibleSing}
The contribution of a fiber of $g$ to the rational double points on $\bar{Y}$ is as follows.
\begin{itemize}
\item[$(1)$] On a singular fiber of type ${\rm I}_{2n}$ there exist $n$ disjoint components contracting to $n$ rational double points of type $A_1$.
\item[$(2)$] On a singular fiber of type ${\rm I}_1^*$, there are two possibilities$:$ the four simple components of the fiber are contracted to
four rational double points of type $A_1$, or four components forming a dual graph of type $D_4$ are contracted to a rational double point of type $D_4$.
\item[$(3)$] On a singular fiber of type ${\rm I}_3^*$, there are two possibilities$:$ two simple components are contracted to two rational double points of type $A_1$ and another four components forming a dual graph of type $D_4$ are contracted to a rational double point of type $D_4$, or six components forming a dual graph of type $D_6$ are contracted to a rational double point of type $D_6$
\item[$(4)$] On a singular fiber of type ${\rm IV}^*$ there are two possibilities$:$ the four disjoint components are contracted to four rational double points of type $A_1$, or four components forming a dual graph of type $D_4$ are contracted to a rational double point of type $D_4$.
\end{itemize}
\end{lemma}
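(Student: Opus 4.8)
The plan is to show that every exceptional curve of the resolution is vertical for $g$, that the exceptional curves lying in a fixed fiber assemble into an admissible root configuration, and that the global rank $12$ forces each fiber to realize the \emph{maximal} such configuration; a short combinatorial optimization then produces exactly the four cases. First I would record that every exceptional curve $\Theta$ of $\rho\colon Y\to\bar Y$ is a component of some fiber of $g$: since $\rho$ contracts $\Theta$ and $\bar\pi$ is finite, $q:=\bar\pi\circ\rho$ sends $\Theta$ to a (canonical) point, and because $g$ is induced from $f$ through $q$, it contracts whatever $q$ contracts, so $g(\Theta)$ is a point. Writing $R=\bigoplus_\Phi R_\Phi$, where $R_\Phi$ is spanned by the exceptional curves contained in the fiber $\Phi$ (an orthogonal decomposition, as distinct fibers are disjoint), each connected component of $R$ lies in a single fiber. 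Hence by Proposition \ref{EHS-sing} every $R_\Phi$ is an orthogonal sum of copies of $A_1$, $D_4$, $D_6$; as all the curves involved are $(-2)$-curves meeting transversally, $R_\Phi$ is realized as a disjoint union of Dynkin diagrams of these types inside the extended Dynkin diagram $\tilde\Gamma(\Phi)$ of the fiber (namely $\tilde A_{2n-1}$ for $\mathrm{I}_{2n}$, $\tilde D_5$ for $\mathrm{I}_1^*$, $\tilde D_7$ for $\mathrm{I}_3^*$, $\tilde E_6$ for $\mathrm{IV}^*$, these being the only fiber types occurring in Theorem \ref{ElkiesSchutt2}).

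The engine of the argument is a rank count. For each fiber type let $m(\Phi)$ be the maximal rank of a sub-diagram of $\tilde\Gamma(\Phi)$ that is a disjoint union of $A_1$, $D_4$, $D_6$. A direct inspection gives $m=n$ for $\mathrm{I}_{2n}$ (a cycle has no trivalent vertex, so no $D$-summand fits, and the largest independent set in a $2n$-cycle has $n$ vertices), and $m=4,6,4$ for $\mathrm{I}_1^*,\mathrm{I}_3^*,\mathrm{IV}^*$ respectively. Running through the eight fibrations of Theorem \ref{ElkiesSchutt2} one checks that in every case $\sum_\Phi m(\Phi)=12$. Since $\operatorname{rank}R=12$ by Proposition \ref{EHS-sing} and $\operatorname{rank}R_\Phi\le m(\Phi)$, equality must hold fiber by fiber: each fiber contributes a configuration of maximal rank $m(\Phi)$.

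It then remains to list the maximal configurations. For $\mathrm{I}_{2n}$ the maximum independent set in the $2n$-cycle is, up to the two alternating choices, unique and yields $n$ pairwise disjoint $A_1$'s, proving (1); alternatively the bound $\operatorname{rank}R_\Phi\le n$ follows from the fact that no two contracted curves can be adjacent (they would span $A_2\not\subset R$) together with Lemma \ref{nodal}, which forces each surviving component to meet exactly two contracted curves, hence a perfect alternation. In $\tilde D_5$ the rank-$4$ configurations are the four valence-one vertices (four $A_1$'s) or a trivalent vertex with its three neighbours (one $D_4$), giving (2); in $\tilde D_7$ the rank-$6$ configurations are $A_1^{\oplus 2}\oplus D_4$ or $D_6$, giving (3); in $\tilde E_6$ the rank-$4$ configurations are the central vertex together with the three ends (four $A_1$'s) or the central vertex with its three neighbours (one $D_4$), giving (4). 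Both possibilities in the three starred cases are actually realized by the examples of Section \ref{sec3}.

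The main obstacle is the combinatorial optimization computing $m(\Phi)$: one must verify that no cleverly placed $D_4$ or $D_6$, using the multiplicity-$\ge 2$ vertices of $\tilde D_5,\tilde D_7,\tilde E_6$, produces an admissible sub-diagram of larger rank than the configurations listed, and simultaneously that $\sum_\Phi m(\Phi)=12$ holds for all eight fibrations, so that maximality is genuinely forced rather than merely bounded. The remaining delicate point is to confirm that both listed configurations in the $\mathrm{I}_1^*,\mathrm{I}_3^*,\mathrm{IV}^*$ cases do occur (not just that they are numerically admissible), which is where the explicit constructions of Section \ref{sec3} enter.
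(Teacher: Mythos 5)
Your proof is correct, but it takes a genuinely different route from the paper's. The paper argues geometrically, fiber by fiber: it uses the integral/non-integral dichotomy of Proposition \ref{insep} for components of the inseparable quotient, Lemma \ref{nodal}, and the constraint that the image of each fiber on $X$ must again be a Kodaira configuration, to decide in each case how many components can be contracted and what shape they form. You instead run a global lattice-theoretic count: every exceptional curve is vertical, $R$ decomposes orthogonally over the fibers into summands whose connected components are $A_1$, $D_4$ or $D_6$ by Proposition \ref{EHS-sing}, and since the maximal admissible rank $m(\Phi)$ sums to exactly $12={\rm rank}(R)$ over the fibers of each of the eight fibrations, every fiber is forced to carry a maximal configuration; listing the maximal configurations in $\tilde A_{2n-1}$, $\tilde D_5$, $\tilde D_7$, $\tilde E_6$ is then a short combinatorial check, which you carry out correctly. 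Your argument is more uniform and leans harder on Proposition \ref{EHS-sing}; the paper's argument is more case-heavy but extracts along the way the finer geometric data (which components are integral, hence where the canonical points sit on the image fiber in $X$ and what that image looks like) that is reused in Lemmas \ref{possibleSing3}--\ref{possibleSing5} and throughout Section \ref{sec5}, so on your route that information would still need a supplementary argument. Two small points: the parenthetical ``they would span $A_2\not\subset R$'' is imprecise, since $A_2$ does occur inside $D_4$ and $D_6$ --- the correct justification, which you also give, is that a $D$-type component requires a trivalent vertex and a cycle has none. And the realizability of both options in the starred cases is not actually needed for the lemma, which only lists possibilities; which of them occur is settled later (Lemmas \ref{possibleSing3}, \ref{possibleSing4-2}, \ref{possibleSing4}, \ref{possibleSing5} and \ref{possiblebisection13}).
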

\begin{proof}
First note that each component of singular fibers of type ${\rm I}_{2n}$, ${\rm I}_1^*$, ${\rm I}_3^*$, 
${\rm IV}^*$ meets transversally with other components.
This implies that integral curves in these fibers form a disjoint union of nodal curves. 
Also note that possible singularities are 12 $A_1$-singularities, 8 $A_1$- and a $D_4$-singularities, 4 $A_1$- and 2 $D_4$-singularities, or 6 $A_1$- and a $D_6$-singularities (Proposition \ref{EHS-sing}).

The first assertion for ${\rm I}_{2n}$ ($n\geq 2$) follows from Lemma \ref{nodal}.
In case of ${\rm I}_{2}$, one component of the fiber is integral and the other is not (if both are integral
or non integral, this contradicts to Lemma \ref{insep}).  Hence the assertion (1) follows.

In case of a fiber of type ${\rm I}_1^*$, there are at most four disjoint components.  If there are four integral curves, then they are four simple components and correspond to four rational double points of type $A_1$.  If the number of integral curves is three, 
then they are two simple components and a component with multiplicity 2 (otherwise, the image of the fiber
to $X$ is not a configuration of Kodaira's type).
Together with the component meeting with three integral curves, they form the exceptional curves of
a rational double point of type $D_4$.

In case of a fiber of type ${\rm I}_3^*$, there are at most five disjoint components.  
If there are five integral curves, then they are four simple components and a component with multiplicity 2.  There exists a unique component meeting three integral curves.  They form the exceptional curves of
a rational double point of type $D_4$.  The remaining two integral curves correspond to two rational double points of type $A_1$.
If the number of integral curves is four, then they are two simple components and two components with multiplicity 2 (otherwise, the image of the fiber
to $X$ is not a configuration of Kodaira's type). 
Together with two components meeting at least two integral curves, they form the exceptional curves of
a rational double point of type $D_6$.

In case of a fiber of type ${\rm IV}^*$, there are at most four disjoint components.  If the number of integral curves is three, then they are three components of the fiber with multiplicity 2 (otherwise, the image of the fiber to $X$ is not a configuration of Kodaira's type).
Together with the component meeting these three curves, they form the exceptional curves of
a rational double point of type $D_4$. If the number of integral curves is four, then they are three
simple components of the fiber and the component with multiplicity 3. These four components correspond to
four $A_1$-singularities.
\end{proof}

By the proof of Lemma \ref{possibleSing}, we can determine the image of each singular fiber to $X$.
Thus we have the following types of the elliptic fibration
$f : X \to {\bf P}^1$ corresponding to $g: Y \to {\bf P}^1$ (see Theorem \ref{ElkiesSchutt2}).

\bigskip
\halign{\hfil\tt#\hfil&&\quad#\hfil\cr
& Type of $g$: & $({\rm I}_6, {\rm I}_6, {\rm I}_6, {\rm I}_6)$, & $({\rm I}_8, {\rm I}_8, {\rm I}_1^*)$, & $({\rm I}_{10}, {\rm I}_{10}, {\rm I}_2, {\rm I}_2)$, &  $({\rm I}_{12}, {\rm I}_3^*)$ \cr
\noalign{\smallskip}
& Type of $f$: & $({\rm I}_3, {\rm I}_3, {\rm I}_3, {\rm I}_3)$, & $({\rm I}_4, {\rm I}_4, {\rm III})$, & 
$({\rm I}_{5}, {\rm I}_{5}, {\rm I}_1, {\rm I}_1)$, & $({\rm I}_{6}, {\rm III})$\cr}

\bigskip

\halign{\hfil\tt#\hfil&&\quad#\hfil\cr
& Type of $g$: & $({\rm I}_{12}, {\rm I}_4, {\rm IV}^*)$, & $({\rm IV}^*, {\rm IV}^*, {\rm IV}^*)$, & $({\rm I}_{16}, {\rm I}_1^*)$, &  $({\rm I}_{18}, {\rm I}_2, {\rm I}_2, {\rm I}_2)$ \cr
\noalign{\smallskip}
& Type of $f$: & $({\rm I}_{6}, {\rm I}_2, {\rm IV})$, & $({\rm IV}, {\rm IV}, {\rm IV})$, & $({\rm I}_{8}, {\rm III})$, & $({\rm I}_{9}, {\rm I}_1, {\rm I}_1, {\rm I}_1)$ \cr}
\bigskip

%\begin{lemma}\label{possibleSing2}
%The types of singularities on $\tilde{X}$ are three possibilities:  twelve rational double points of type $A_1$, one rational double point of type $D_4$ and eight rational double points of type $A_1$, or one rational double point of type $D_6$ and six rational double points of type $A_1$. 
%\end{lemma}
The following three lemmas easily follow from Lemma \ref{possibleSing} and its proof.

\begin{lemma}\label{possibleSing3}
Assume that $\bar{Y}$ has a rational double point of type
$D_6$ and six rational double points of type $A_1$.  Then $X$ has only one type 
$({\rm I}_{6}, {\rm III})$
of singular fibers of elliptic fibrations, and $\bar{Y}$ has six rational double points of type $A_1$ 
over the six singular points of the fiber of type ${\rm I}_6$ and a rational double point of type $D_6$ 
over the singular point of the fiber of type ${\rm III}$.
\end{lemma}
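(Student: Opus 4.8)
The plan is to exploit the rigidity of the supersingular $K3$ surface $Y$ with Artin invariant $1$: every elliptic fibration on $X$ lifts to one of the eight fibrations classified in Theorem \ref{ElkiesSchutt2}, and I will show that the presence of a $D_6$-point on $\bar{Y}$ singles out exactly one of them. First I would fix an arbitrary elliptic fibration $f : X \to {\bf P}^1$ and pass to the induced fibration $g : Y \to {\bf P}^1$, which by Theorem \ref{ElkiesSchutt2} is of one of the eight listed types. The six exceptional $(-2)$-curves of the resolution $\rho : Y \to \bar{Y}$ lying over the $D_6$-point are contracted by the covering map $\bar{\pi}\circ\rho$ to a single canonical point of $X$, hence are mapped by $g$ into a single fiber; since they form a connected $D_6$ diagram and components of distinct fibers of $g$ are disjoint, they must all be components of one fiber of $g$.

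The key step is then Lemma \ref{possibleSing}. Inspecting the fiber types occurring among the eight fibrations, namely ${\rm I}_{2n}$, ${\rm I}_1^*$, ${\rm I}_3^*$ and ${\rm IV}^*$, a rational double point of type $D_6$ is contributed only by a fiber of type ${\rm I}_3^*$ (part $(3)$): the cyclic fibers ${\rm I}_{2n}$ yield only $A_1$-points, while ${\rm I}_1^*$ and ${\rm IV}^*$ yield only $A_1$- and $D_4$-points, and none of these contains a connected $D_6$ subdiagram among its components. Consulting Theorem \ref{ElkiesSchutt2}, the unique fibration carrying an ${\rm I}_3^*$ fiber is $({\rm I}_{12}, {\rm I}_3^*)$, so $g$ must be of this type. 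By the table following Lemma \ref{possibleSing}, the corresponding fibration on $X$ is then of type $({\rm I}_6, {\rm III})$. Finally, part $(1)$ of that lemma forces the ${\rm I}_{12}$ fiber to contribute six $A_1$-points, which are exactly the six singular points of the ${\rm I}_6$ fiber, while the ${\rm I}_3^*$ fiber contributes the $D_6$-point lying over the singular point of the ${\rm III}$ fiber; this matches the hypothesis $R = A_1^{\oplus 6}\oplus D_6$ and completes the description.

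I expect the only point requiring care to be the combinatorial claim that, among the fibers occurring, only ${\rm I}_3^*$ admits a $D_6$ configuration among its components, but this is precisely the content of Lemma \ref{possibleSing}$(3)$ and its proof, so the argument is short. I should also note that an ${\rm I}_3^*$ fiber has the alternative contribution $2A_1 \oplus D_4$; this possibility is automatically excluded here, since it would force $R = A_1^{\oplus 8}\oplus D_4$ rather than the assumed $A_1^{\oplus 6}\oplus D_6$, so under our hypothesis the ${\rm I}_3^*$ fiber necessarily degenerates to the $D_6$-point. This confirms that $({\rm I}_6, {\rm III})$ is the only type of singular fibers of elliptic fibrations on $X$.
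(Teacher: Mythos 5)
Your argument is correct and is essentially the paper's own (the paper simply asserts that this lemma "easily follows from Lemma \ref{possibleSing} and its proof," and your write-up is exactly that derivation made explicit: a $D_6$ configuration of exceptional curves is connected and vertical, only an ${\rm I}_3^*$ fiber can produce it by Lemma \ref{possibleSing}(3), and $({\rm I}_{12},{\rm I}_3^*)$ is the unique fibration in Theorem \ref{ElkiesSchutt2} containing one). Your closing remark ruling out the alternative $2A_1\oplus D_4$ contribution of the ${\rm I}_3^*$ fiber by comparison with the assumed root lattice $A_1^{\oplus 6}\oplus D_6$ is the right way to pin down the location of the $D_6$-point over the singular point of the ${\rm III}$ fiber.
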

%\begin{proof}
%This is obvious. 
%\end{proof}
\begin{lemma}\label{possibleSing4-2}
Assume that $\bar{Y}$ has two rational double points of type
$D_4$ and four rational double points of type $A_1$.  Then $X$ has only one type
$({\rm IV}, {\rm IV}, {\rm IV})$ of singular fibers of elliptic fibrations, and 
$\bar{Y}$ has rational double points of type $D_4$ over the singular points of two fibers of type ${\rm IV}$ and four rational double points of type $A_1$ 
over four points on the remaining  singular fiber $F$ of type ${\rm IV}$. One of the four points is the 
singular point of the fiber $F$ and the remaining three points consist of a point on each component
of $F$.  
\end{lemma}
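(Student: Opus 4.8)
The plan is to run the same analysis as in the classification table following Lemma \ref{possibleSing}, but now reading the prescribed singularity type $R=A_1^{\oplus 4}\oplus D_4^{\oplus 2}$ backwards to pin down the fibration. First I would note that any elliptic fibration $f:X\to {\bf P}^1$ lifts to an elliptic fibration $g:Y\to {\bf P}^1$ on the supersingular $K3$ cover, and by Theorem \ref{ElkiesSchutt2} the latter is one of the eight listed types. The key observation is that, by Lemma \ref{possibleSing}, a rational double point of type $D_4$ can be produced only by a fiber of type ${\rm I}_1^*$, ${\rm I}_3^*$ or ${\rm IV}^*$ (cases (2), (3), (4)), and each such fiber produces at most one $D_4$. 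Since $\bar{Y}$ carries two $D_4$-singularities, $g$ must possess at least two fibers of these three kinds. Scanning the list in Theorem \ref{ElkiesSchutt2}, the fibrations $({\rm I}_8,{\rm I}_8,{\rm I}_1^*)$, $({\rm I}_{12},{\rm I}_3^*)$, $({\rm I}_{12},{\rm I}_4,{\rm IV}^*)$ and $({\rm I}_{16},{\rm I}_1^*)$ each have exactly one such fiber, the other three have none, and only $({\rm IV}^*,{\rm IV}^*,{\rm IV}^*)$ has at least two (in fact three). Hence $g$ is forced to be of type $({\rm IV}^*,{\rm IV}^*,{\rm IV}^*)$.

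Next I would determine how the singularities are distributed among the three ${\rm IV}^*$ fibers. By Lemma \ref{possibleSing}(4), each ${\rm IV}^*$ fiber contributes either a single $D_4$ (when its three multiplicity-two components, together with the central component, are contracted) or four copies of $A_1$ (when its three simple components and the multiplicity-three component are contracted). Writing $k$ for the number of fibers of the first kind, we get $k$ singularities of type $D_4$ and $4(3-k)$ of type $A_1$; matching this against $R=A_1^{\oplus 4}\oplus D_4^{\oplus 2}$ forces $k=2$, so exactly one ${\rm IV}^*$ fiber is of the second kind and accounts for all four $A_1$-singularities. The image fiber types on $X$ are then read off from the proof of Lemma \ref{possibleSing}(4), exactly as recorded in the table after that lemma: each ${\rm IV}^*$ on $Y$ descends to a fiber of type ${\rm IV}$ on $X$. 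Therefore $f$ is of type $({\rm IV},{\rm IV},{\rm IV})$, and this is the only possibility, as claimed.

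Finally, for the geometric description of the distinguished fiber $F$, I would trace the $A_1$-producing ${\rm IV}^*$ fiber through the map $Y\to X$: the two $D_4$-producing fibers sit over the singular points of the other two ${\rm IV}$ fibers, while on the third fiber the four contracted curves are the three simple components and the central multiplicity-three component. Identifying the images of these four curves on $F$ shows that one $A_1$ lies at the singular point of $F$ (the image of the multiplicity-three component) and the remaining three lie one on each of the three components of $F$ (the images of the three simple components), giving the stated configuration. The whole argument indeed ``easily follows'' from Lemma \ref{possibleSing}; the only delicate point—and the main thing to get right—is the bookkeeping in this last step, namely keeping track of which components of the ${\rm IV}^*$ fiber are integral and where their images land, so as to confirm the precise incidence of the four $A_1$ points with the components of $F$.
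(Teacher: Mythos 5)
Your proof is correct and follows exactly the route the paper intends: the paper dispenses with this lemma by saying it ``easily follows from Lemma \ref{possibleSing} and its proof'' together with the table of induced fibrations, and your write-up is precisely that argument made explicit (each fiber contributes at most one $D_4$, so two $D_4$'s force $g$ to be $({\rm IV}^*,{\rm IV}^*,{\rm IV}^*)$; the $A_1$-count forces exactly one fiber of the four-$A_1$ kind; and the incidence of the four contracted curves with the remaining components of that fiber gives the stated configuration on $F$). No gaps.
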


\begin{lemma}\label{possibleSing4}
Assume that $\bar{Y}$ has a rational double point of type
$D_4$ and eight rational double points of type $A_1$.  Then $X$ can have five types
of singular fibers of elliptic fibrations as follows$:$ 
$$({\rm I}_4, {\rm I}_4, {\rm III}),\ ({\rm I}_{6}, {\rm III}), \ ({\rm I}_{6}, {\rm I}_2, {\rm IV}),\ 
({\rm IV}, {\rm IV}, {\rm IV}),\ ({\rm I}_{8}, {\rm III}).$$
\begin{itemize}
\item[$(1)$] In cases of $({\rm I}_4, {\rm I}_4, {\rm III}),\ ({\rm I}_{6}, {\rm I}_2, {\rm IV}),\ ({\rm I}_{8}, {\rm III})$, $\bar{Y}$ has eight rational double points of type $A_1$ 
over the eight singular points of singular fibers of type ${\rm I}_n$ and a rational double point of type $D_4$ 
over the singular point of the fiber of type ${\rm III}$ or type ${\rm IV}$.  
\item[$(2)$] In case of $({\rm I}_{6}, {\rm III})$, 
$\bar{Y}$ has eight rational double points of type $A_1$ 
over the six singular points of the fiber of type ${\rm I}_6$ and two points on a component of the
singular fiber of type ${\rm III}$ and a rational double point of type $D_4$ 
over a point of the other component of the fiber of type ${\rm III}$.  
\item[$(3)$] In case of $({\rm IV}, {\rm IV}, {\rm IV})$, $\bar{Y}$ has a rational double point of type $D_4$ over the singular point of a fiber of type ${\rm IV}$ and eight rational double points of type $A_1$ 
over the eight points on the remaining two singular fibers of type ${\rm IV}$. Two of the eight points are
two singular points of the two fibers and the remaining six points consist of a point on each component
of the two singular fibers.  
\end{itemize}  
\end{lemma}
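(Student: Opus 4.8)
The plan is to let $f:X\to{\bf P}^1$ be an arbitrary elliptic fibration on $X$ and to examine the induced fibration $g:Y\to{\bf P}^1$, which by the discussion preceding Lemma \ref{possibleSing} is one of the eight elliptic fibrations of Theorem \ref{ElkiesSchutt2}. By hypothesis and Proposition \ref{EHS-sing} the lattice $R$ generated by the exceptional curves of $Y\to\bar Y$ equals $A_1^{\oplus 8}\oplus D_4$, of rank $12$. The organizing observation is that, for every one of the eight fibrations and for every choice in Lemma \ref{possibleSing}, the per-fiber contributions to the rational double points add up to rank $12$; since this equals the rank of $R$, the reducible fibers already exhaust $R$ and no exceptional curve is horizontal. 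Thus for each $g$ the only freedom is how the single $D_4$ is distributed among its fibers, and this is pinned down by the requirement $R=A_1^{\oplus 8}\oplus D_4$.

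First I would discard the fibrations that cannot produce a $D_4$ at all. By Lemma \ref{possibleSing}(1) a fiber of type ${\rm I}_{2n}$ yields only copies of $A_1$, so $({\rm I}_6,{\rm I}_6,{\rm I}_6,{\rm I}_6)$, $({\rm I}_{10},{\rm I}_{10},{\rm I}_2,{\rm I}_2)$ and $({\rm I}_{18},{\rm I}_2,{\rm I}_2,{\rm I}_2)$ force $R=A_1^{\oplus 12}$ and are incompatible with the hypothesis. In each of the five remaining fibrations the $D_4$ must come from the unique fiber of type ${\rm I}_1^*$, ${\rm I}_3^*$ or ${\rm IV}^*$: for $({\rm I}_8,{\rm I}_8,{\rm I}_1^*)$ and $({\rm I}_{16},{\rm I}_1^*)$ the ${\rm I}_{2n}$ fibers already supply the eight $A_1$'s, so the ${\rm I}_1^*$ is forced into its $D_4$-option of Lemma \ref{possibleSing}(2); for $({\rm I}_{12},{\rm I}_4,{\rm IV}^*)$ the ${\rm I}_{12}$ and ${\rm I}_4$ supply $6+2$ copies and the ${\rm IV}^*$ is forced into its $D_4$-option of Lemma \ref{possibleSing}(4); for $({\rm I}_{12},{\rm I}_3^*)$ the ${\rm I}_{12}$ supplies six copies and the ${\rm I}_3^*$ is forced into its $2A_1\oplus D_4$-option rather than $D_6$ of Lemma \ref{possibleSing}(3); and for $({\rm IV}^*,{\rm IV}^*,{\rm IV}^*)$ exactly one factor takes its $D_4$-option and the other two take the $4A_1$-option. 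Reading off the image fibration from the table following Lemma \ref{possibleSing} produces precisely the five types $({\rm I}_4,{\rm I}_4,{\rm III})$, $({\rm I}_6,{\rm III})$, $({\rm I}_6,{\rm I}_2,{\rm IV})$, $({\rm IV},{\rm IV},{\rm IV})$, $({\rm I}_8,{\rm III})$.

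For the refined assertions (1)--(3) I would trace the images of the contracted curves through the proof of Lemma \ref{possibleSing}. When an ${\rm I}_{2n}$ fiber degenerates to ${\rm I}_n$, its $n$ integral components are contracted and reappear as the $n$ nodes of the image fiber, so the associated $A_1$ points of $\bar Y$ lie over the singular points of the ${\rm I}_n$ fibers on $X$; and the four curves of the $D_4$-configuration inside an ${\rm I}_1^*$ or ${\rm IV}^*$ are contracted to a single point through which all surviving multiplicity-one components pass, namely the singular point of the resulting ${\rm III}$ or ${\rm IV}$ fiber. This yields (1). For (3), in the $4A_1$-option of Lemma \ref{possibleSing}(4) the three multiplicity-one tips and the multiplicity-three central component of an ${\rm IV}^*$ are contracted, so one $A_1$ lands at the singular point of the image ${\rm IV}$ (the image of the central component) and the other three land one on each of its three components, while the $D_4$-option places the $D_4$ at the singular point of its own image ${\rm IV}$. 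For (2), in the $2A_1\oplus D_4$-option of Lemma \ref{possibleSing}(3) for the ${\rm I}_3^*$ fiber the two integral tips not entering the $D_4$ are contracted to the two $A_1$'s, both lying on the image of the single spine component meeting them; the multiplicity-two spine component belonging to the $D_4$ instead meets the other surviving spine component, so the $D_4$ lands on the image of that component. These two images are the two components of the ${\rm III}$ fiber, which proves (2).

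The enumeration is routine once the rank-exhaustion point is granted; the only place where a mere count of contributions does not suffice is the ${\rm I}_3^*$ case of part (2), where one must use the actual shape of the $\tilde D_7$ diagram to decide which of the two surviving spine components carries the two $A_1$'s and which carries the $D_4$. I expect this incidence bookkeeping to be the main, and only mildly delicate, step.
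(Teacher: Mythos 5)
Your proposal is correct and follows exactly the route the paper intends: the paper gives no written proof for this lemma, stating only that it ``easily follows from Lemma \ref{possibleSing} and its proof'' together with the table of corresponding fiber types, and your argument is precisely that deduction carried out in detail (eliminating the three all-$\mathrm{I}_{2n}$ fibrations, forcing the $D_4$-option on the unique $\mathrm{I}_1^*$, $\mathrm{I}_3^*$ or $\mathrm{IV}^*$ fiber, and tracing images of contracted components for the refined statements). The only cosmetic quibble is that ruling out horizontal exceptional curves needs no rank count — they are contracted by $Y\to\bar Y$, hence automatically vertical for $g$ — but this does not affect correctness.
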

%\begin{proof}
%This is obvious. 
%\end{proof}

\begin{lemma}\label{possibleSing5}
Assume that $\bar{Y}$ has twelve rational double points of type $A_1$ and elliptic fibrations are special.  Then $X$ can have six types
of singular fibers of elliptic fibrations as follows$:$ 
$$({\rm I}_3, {\rm I}_3, {\rm I}_3, {\rm I}_3), \ ({\rm I}_4, {\rm I}_4, {\rm III}),\ 
({\rm I}_{5}, {\rm I}_5, {\rm I}_1, {\rm I}_1), \ ({\rm I}_{6}, {\rm I}_2, {\rm IV}),\ 
({\rm I}_{8}, {\rm III}), \ ({\rm I}_9, {\rm I}_1, {\rm I}_1, {\rm I}_1).$$
\begin{itemize}
\item[$(1)$] In cases of $({\rm I}_3, {\rm I}_3, {\rm I}_3, {\rm I}_3), ({\rm I}_{5}, {\rm I}_5, {\rm I}_1, {\rm I}_1), ({\rm I}_9, {\rm I}_1, {\rm I}_1, {\rm I}_1)$, $\bar{Y}$ has twelve rational double points of type $A_1$  over the twelve singular points of the fibers of type ${\rm I}_n$.  
\item[$(2)$] In case of $({\rm I}_4, {\rm I}_4, {\rm III}), ({\rm I}_{8}, {\rm III})$, $\bar{Y}$ has eight rational double points of type $A_1$ 
over the eight singular points of the fibers of type ${\rm I}_n$ and four rational double points of type $A_1$ over four points on the singular fiber of type ${\rm III}$. Each component of the fiber of type ${\rm III}$ contains two of the four points. 
\item[$(3)$] In case of $({\rm I}_{6}, {\rm I}_2, {\rm IV})$, $\bar{Y}$ has eight rational double points of type $A_1$ over the eight singular points of the fibers of type ${\rm I}_n$ and four rational double points of type $A_1$ over the fiber of type ${\rm IV}$.  One of the four points is the singular point of the 
fiber and the remaining three points consist of a point on each component of the fiber.  
\end{itemize}  
\end{lemma}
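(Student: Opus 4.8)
The plan is to start from the complete list of eight elliptic fibrations $g$ on $Y$ furnished by Theorem~\ref{ElkiesSchutt2}, together with the dictionary, established in the proof of Lemma~\ref{possibleSing} and recorded in the table following it, that assigns to each such $g$ the induced fibration $f$ on $X$. Since by hypothesis $\bar{Y}$ has twelve rational double points all of type $A_1$ (equivalently $R=A_1^{\oplus 12}$ in Proposition~\ref{EHS-sing}), I would run through the eight fibrations and keep exactly those for which every fiber of $g$ can contribute only $A_1$-singularities.

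Here Lemma~\ref{possibleSing} does almost all of the work. A fiber of type ${\rm I}_{2n}$ always contributes $n$ points of type $A_1$, so the fibrations built only from ${\rm I}_{2n}$-fibers, namely $({\rm I}_6,{\rm I}_6,{\rm I}_6,{\rm I}_6)$, $({\rm I}_{10},{\rm I}_{10},{\rm I}_2,{\rm I}_2)$ and $({\rm I}_{18},{\rm I}_2,{\rm I}_2,{\rm I}_2)$, automatically produce twelve $A_1$-points and survive, giving the types $({\rm I}_3,{\rm I}_3,{\rm I}_3,{\rm I}_3)$, $({\rm I}_5,{\rm I}_5,{\rm I}_1,{\rm I}_1)$ and $({\rm I}_9,{\rm I}_1,{\rm I}_1,{\rm I}_1)$. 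For fibers of type ${\rm I}_1^*$ and ${\rm IV}^*$, Lemma~\ref{possibleSing} offers a choice between four $A_1$-points and a single $D_4$-point; the twelve-$A_1$ hypothesis forces the former, so $({\rm I}_8,{\rm I}_8,{\rm I}_1^*)$, $({\rm I}_{16},{\rm I}_1^*)$ and $({\rm I}_{12},{\rm I}_4,{\rm IV}^*)$ survive with types $({\rm I}_4,{\rm I}_4,{\rm III})$, $({\rm I}_8,{\rm III})$ and $({\rm I}_6,{\rm I}_2,{\rm IV})$. By contrast, a fiber of type ${\rm I}_3^*$ contributes, in either alternative of Lemma~\ref{possibleSing}(3), at least one point of type $D_4$ or $D_6$; hence $({\rm I}_{12},{\rm I}_3^*)$ is incompatible with the hypothesis and the type $({\rm I}_6,{\rm III})$ is discarded. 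At this stage the only candidate not yet on the asserted list is $({\rm IV},{\rm IV},{\rm IV})$, arising from $g=({\rm IV}^*,{\rm IV}^*,{\rm IV}^*)$ with each ${\rm IV}^*$ contracted to four $A_1$-points.

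The crux, and the one place where the hypothesis that all elliptic fibrations are special is used, is to exclude $({\rm IV},{\rm IV},{\rm IV})$. Suppose $f$ were of this type; by specialness it would carry a special bi-section $s$, and as in the proofs of Lemmas~\ref{bi-section1} and~\ref{bi-section3} its pullback $\tilde{s}$ is a section of $g=({\rm IV}^*,{\rm IV}^*,{\rm IV}^*)$. A section meets each fiber in a single point lying on a multiplicity-one component; but the only multiplicity-one components of an ${\rm IV}^*$-fiber are its three simple ends, and in the twelve-$A_1$ situation all three are among the four disjoint components contracted to $A_1$-points (Lemma~\ref{possibleSing}(4)). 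Thus $\tilde{s}$ passes through a contracted component in each of the three ${\rm IV}^*$-fibers, that is, through three distinct $A_1$-points of $\bar{Y}$, so three points of $s$ are blown up in the minimal resolution. This contradicts Lemma~\ref{nodal}, which forces the lift of a nodal curve to meet the exceptional locus in exactly two blown-up points. Hence no special fibration on $X$ is of type $({\rm IV},{\rm IV},{\rm IV})$, and exactly the six listed types remain. I expect this to be the main obstacle, as it is the only step not settled by the lattice bookkeeping of Lemma~\ref{possibleSing}; the delicate point is precisely that an ${\rm IV}^*$-fiber has no unerased multiplicity-one component through which the section could pass, which is what distinguishes it from the surviving ${\rm I}_{2n}$- and ${\rm I}_1^*$-cases.

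Finally, the three itemized assertions locating the $A_1$-points are read off fiber by fiber from the proof of Lemma~\ref{possibleSing}. For the ${\rm I}_{2n}$-only fibrations every $A_1$ sits over a node of the corresponding ${\rm I}_n$-fiber on $X$, which is case~(1). Contracting the four simple ends of an ${\rm I}_1^*$-fiber turns it into a fiber of type ${\rm III}$ whose two components are the images of the two multiplicity-two components, each carrying the images of its two attached ends, which yields the statement that each component contains two of the four points in case~(2). Contracting the three simple ends together with the central multiplicity-three component of an ${\rm IV}^*$-fiber turns it into a fiber of type ${\rm IV}$, the central component producing the $A_1$-point at the triple point and each simple end producing one $A_1$-point on a distinct component, which is case~(3). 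These are routine traces of the contractions and require no new input.
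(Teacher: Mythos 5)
Your proposal is correct and follows essentially the same route as the paper: run through the eight fibrations of Theorem \ref{ElkiesSchutt2} using Lemma \ref{possibleSing} to force the all-$A_1$ alternatives (discarding $({\rm I}_{12},{\rm I}_3^*)$ outright), and then exclude $({\rm IV},{\rm IV},{\rm IV})$ by noting that a section of $g$ must meet an integral simple component in each ${\rm IV}^*$-fiber. Your exclusion argument via three canonical points contradicting Lemma \ref{nodal} is exactly the paper's parenthetical version of its own argument (the paper's primary phrasing is that all three fibers of $f$ would be multiple), so the two proofs coincide in substance.
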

\begin{proof}
The only non trivial thing is non-existence of the case $({\rm IV}, {\rm IV}, {\rm IV})$.
In this case, three singular fibers of $g: Y\to {\bf P}^1$ are of type ${\rm IV}^*$ and all simple components are integral.
Recall that $g$ has a section $s$ whose image on $X$ is a special bi-section of $f$ (we assume that $f$ is special).  This implies that
all three singular fibers of $f$ are multiple which is a contradiction (also $s$ passes three canonical points which is impossible (Lemma \ref{nodal})).
%(Also $S$ meets three integral curves and hence contracts to a point on $X$.  This destroies the fibration structure of $p$).
\end{proof}

\section{Special bi-sections of a special elliptic fibration}\label{sec5}

%We use the same notation as in the previous section.
In this section we study possibilities of special bi-sections of a special elliptic fibration
$f: X \to {\bf P}^1$ on an Enriques surface $X$. We assume that the canonical cover $\bar{Y}$ of $X$ has only rational double points and its minimal nonsingular model $Y$ is the supersingular $K3$ surface with the Artin invariant 1.
Let $s$ be a special bi-section.
In the following lemmas \ref{possiblebisection1}, \ref{possiblebisection4}, 
\ref{possiblebisection5}, \ref{possiblebisection2}, \ref{possiblebisection3},  \ref{possiblebisection6}, 
we assume that the canonical cover $\bar{Y}$ has only rational double points of type $A_1$.  

\begin{lemma}\label{possiblebisection1} 
In case that $f$ has singular fibers of type $({\rm I}_5, {\rm I}_5, {\rm I}_1, {\rm I}_1)$,
the following three cases occur$:$
\begin{itemize}
\item[$(1)$] $s$ passes through a singular point of two fibers of type ${\rm I}_5$.
\item[$(2)$] $s$ passes through a singular point of two fibers of type ${\rm I}_1$.
\item[$(3)$] $s$ passes a singular point of a fiber of type ${\rm I}_5$ and that of a fiber of type ${\rm I}_1$.
\end{itemize}
\end{lemma}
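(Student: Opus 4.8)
The plan is to transfer everything to the covering $K3$ surface $Y$, where the induced fibration is forced by Theorem \ref{ElkiesSchutt} to have a section, and then to read off the combinatorics from the fact that a section meets each fiber exactly once. Since $f$ is of type $({\rm I}_5, {\rm I}_5, {\rm I}_1, {\rm I}_1)$, the table following Lemma \ref{possibleSing} shows that the induced elliptic fibration $g : Y \to {\bf P}^1$ is of type $({\rm I}_{10}, {\rm I}_{10}, {\rm I}_2, {\rm I}_2)$, and by Theorem \ref{ElkiesSchutt} it has a section. Because we are assuming $\bar Y$ has only rational double points of type $A_1$, Lemma \ref{possibleSing5}(1) identifies the twelve canonical points of $X$ with the twelve singular points (nodes) of the four fibers of $f$, while Lemma \ref{possibleSing}(1) shows that the twelve exceptional $(-2)$-curves lying over the corresponding $A_1$-points are exactly the integral components of the fibers of $g$ (the five alternating components of each ${\rm I}_{10}$ and one component of each ${\rm I}_2$).

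First I would show that the strict transform $\tilde s$ of the special bi-section $s$ is a section of $g$. By Lemma \ref{nodal}, $\tilde s$ is a nodal curve and $\tilde s \to s$ has degree one; composing with the degree-two map $s \to {\bf P}^1$ (as $s$ is a bi-section) and the degree-two Frobenius base map ${\bf P}^1 \to {\bf P}^1$ relating $g$ and $f$ shows that $\tilde s \to {\bf P}^1$ has degree one, i.e. $\tilde s$ is a section. This is exactly the mechanism already invoked in the proof of Lemma \ref{bi-section1}(3), that the pullback of any special bi-section of $f$ is a section of $g$. Consequently $\tilde s$ meets every fiber of $g$ in exactly one point, lying on a single component.

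Next I would set up the dictionary between $s$ and $\tilde s$. Since the canonical cover $\bar\pi : \bar Y \to X$ is a purely inseparable bijection on points, $s$ passes through a canonical point $p$ if and only if the strict transform $\tilde s$ meets the exceptional $(-2)$-curve over the $A_1$-point lying above $p$, that is, the integral component of the fiber of $g$ over the corresponding node. By Lemma \ref{nodal} the curve $s$ passes through exactly two canonical points, so $\tilde s$ meets exactly two integral components. As $\tilde s$ is a section it meets each fiber of $g$ on only one component, so these two integral components lie in two distinct fibers of $g$; in particular $s$ cannot pass through two nodes of the same fiber. Hence $s$ passes through the singular point of exactly two of the four fibers ${\rm I}_5, {\rm I}_5, {\rm I}_1, {\rm I}_1$. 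In each of the remaining two fibers $\tilde s$ meets a non-integral component at its unique point; translating back and using $s \cdot F = 2$ together with the fact that $s$ then meets that fiber at a single image point, $s$ is tangent there at a simple point. The three choices of the unordered pair of fibers carrying a node of $s$ — both ${\rm I}_5$'s, both ${\rm I}_1$'s, or one of each — are precisely cases $(1)$, $(2)$, $(3)$.

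The main obstacle is the bookkeeping in the dictionary step: one must check carefully that a meeting of the section $\tilde s$ with an integral component corresponds exactly to $s$ passing through the node of the image fiber, and conversely that meeting a non-integral component forces a tangency at a smooth point via the bi-section relation $s \cdot F = 2$ and the ``one point per fiber'' property of a section. The degree count identifying $\tilde s$ as a section, which underlies that property (and rules out $s$ carrying two nodes on a single fiber), is the other point requiring care; once these two facts are in hand the remaining case analysis is purely combinatorial.
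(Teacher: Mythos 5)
Your argument is correct and follows essentially the same route as the paper's: the paper's entire proof is the observation that, since all singularities of $\bar Y$ are of type $A_1$, Lemma \ref{nodal} forces $s$ to pass through exactly two of the twelve canonical points, which by Lemma \ref{possibleSing5} are the twelve nodes of the four fibers, whence the three listed cases. Your extra step of lifting $s$ to a section of $g$ to rule out two nodes lying on a single fiber merely makes explicit what the paper dismisses as obvious (it also follows directly from $s\cdot F=2$, since passing through a node of $F$ already contributes intersection multiplicity two).
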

\begin{proof}
Since $\bar{Y}$ has only rational double points of type $A_1$, any special bi-section passes through two
canonical points (Lemma \ref{nodal}).  Hence the assertion is obvious. 
\end{proof}

By the same proof as that of Lemma \ref{possiblebisection1}, we have the following two lemmas 
\ref{possiblebisection4}, \ref{possiblebisection5}.

\begin{lemma}\label{possiblebisection4} 
In case that $f$ has singular fibers of type $({\rm I}_9, {\rm I}_1, {\rm I}_1, {\rm I}_1)$, 
the following two cases occur$:$
\begin{itemize}
\item[$(1)$] $s$ passes through a singular point of the fiber of type ${\rm I}_9$.
\item[$(2)$] $s$ passes through a singular point of two fibers of type ${\rm I}_1$.
\end{itemize}
\end{lemma}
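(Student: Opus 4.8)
The plan is to reduce the statement to a count of canonical points, following the proof of Lemma \ref{possiblebisection1} essentially verbatim. The starting point is Lemma \ref{nodal}: since by the standing hypothesis $\bar{Y}$ has only rational double points of type $A_1$, the special bi-section $s$ is a nodal curve whose strict transform on $Y$ is again nodal, and exactly two points of $s$ are blown up in the minimal resolution. Equivalently, $s$ passes through exactly two of the canonical points of $X$, exactly as used in Lemma \ref{possiblebisection1}. Thus the whole problem reduces to the question: among the canonical points of $X$, which pairs can support the two blown-up points of $s$?

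The second step is to locate all the canonical points relative to $f$. Here I would invoke Lemma \ref{possibleSing5}(1): for a special elliptic fibration of type $({\rm I}_9, {\rm I}_1, {\rm I}_1, {\rm I}_1)$ all twelve $A_1$-points of $\bar{Y}$ lie over the singular points of the ${\rm I}_n$ fibers of $f$. Concretely, the twelve canonical points are precisely the nine nodes of the fiber of type ${\rm I}_9$ together with the single node on each of the three fibers of type ${\rm I}_1$, and there are no others.

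The conclusion is then a pigeonhole dichotomy on the two canonical points lying on $s$, split according to whether the ${\rm I}_9$ fiber is involved. If at least one of the two points is a node of the fiber of type ${\rm I}_9$, then $s$ passes through a singular point of that fiber, which is case $(1)$. Otherwise both points lie on fibers of type ${\rm I}_1$; since each such fiber carries a single node, the two points are forced onto two distinct ${\rm I}_1$ fibers, which is case $(2)$. These two alternatives are complementary, so they exhaust all possibilities.

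The argument requires no computation, so I do not expect a serious obstacle; the only points needing care are the two borrowed inputs. First, the reduction ``$s$ meets the singular locus in exactly two canonical points'' genuinely uses the $A_1$-only hypothesis, since at a $D_4$-point the bookkeeping of blown-up (and infinitely near) points would differ — this is precisely why these lemmas are stated under that assumption. Second, one should take the location of the canonical points from the fiber-type correspondence of Section \ref{sec4} (via Lemma \ref{possibleSing5}) rather than re-derive it. Granted these, the statement is a purely combinatorial count of nodes among the singular fibers.
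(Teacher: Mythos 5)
Your argument is correct and is essentially the paper's own proof: the paper disposes of this lemma by noting it follows "by the same proof as that of Lemma \ref{possiblebisection1}," namely that under the $A_1$-only hypothesis every special bi-section passes through exactly two canonical points (Lemma \ref{nodal}), all of which lie at the nodes of the ${\rm I}_n$ fibers, after which the dichotomy is immediate. You have merely spelled out the pigeonhole step that the paper leaves implicit.
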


\begin{lemma}\label{possiblebisection5} 
In case that $f$ has singular fibers of type $({\rm I}_3, {\rm I}_3, {\rm I}_3, {\rm I}_3)$, $s$ passes through a singular point of two fibers of type ${\rm I}_3$.
\end{lemma}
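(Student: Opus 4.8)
The plan is to mimic the structure of the proof of Lemma \ref{possiblebisection1}, since the geometric mechanism is identical. The key fact I would invoke is Lemma \ref{nodal}: because the canonical cover $\bar{Y}$ has only rational double points of type $A_1$ (our standing assumption in this block of lemmas), any nodal curve $E$ on $X$ has the property that exactly two points (counted with infinitely near points) are blown up during the minimal resolution $\rho: Y \to \bar{Y}$, and these correspond precisely to the canonical points of $X$. Since a special bi-section $s$ is by definition a nodal curve, it must pass through exactly two canonical points of $X$.

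Next I would identify the canonical points in the present situation. By Lemma \ref{possibleSing5}, when $\bar{Y}$ has twelve rational double points of type $A_1$ and $f$ is of type $({\rm I}_3, {\rm I}_3, {\rm I}_3, {\rm I}_3)$, all twelve $A_1$-singularities of $\bar{Y}$ lie over the twelve singular points of the four fibers of type ${\rm I}_3$ (case (1) of that lemma). Thus the twelve canonical points of $X$ are exactly the twelve singular points of the four fibers of type ${\rm I}_3$; there are no other canonical points on $X$. In particular, the special bi-section $s$ must pass through two of these twelve singular points.

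The remaining step is to rule out the possibility that both canonical points through which $s$ passes lie on the same fiber of type ${\rm I}_3$. Here I would use the intersection-theoretic constraint: $s$ is a bi-section, so it meets each fiber with total multiplicity $2$, equivalently $\langle s, f\rangle = 2$ where $f$ denotes the class of a fiber. If $s$ passed through two distinct singular points of a single ${\rm I}_3$-fiber, then $s$ would meet two of the three components of that fiber, and since each such intersection already contributes to the total intersection with the fiber, one checks that the intersection number with the remaining three fibers would be forced to zero — contradicting that $s$ must still meet the other three fibers (each being a full fiber in the same linear system). More directly, since the two canonical points on $s$ are its only singular points on $X$ and $s$ meets each of the four fibers, both points cannot be concentrated on one fiber. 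Therefore the two canonical points lie on two \emph{distinct} fibers of type ${\rm I}_3$, and $s$ is tangent (meets with multiplicity $2$ at a smooth point) to each of the remaining two ${\rm I}_3$-fibers.

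The main obstacle I anticipate is making the last elimination step fully rigorous rather than merely plausible: one must verify that $s$ genuinely cannot have both its canonical points on a single ${\rm I}_3$-fiber. The cleanest way is to lift to the covering surface $Y$, where (by the argument in Lemma \ref{bi-section1}) the pullback of $s$ becomes a \emph{section} of the induced fibration $g: Y \to {\bf P}^1$ of type $({\rm I}_6, {\rm I}_6, {\rm I}_6, {\rm I}_6)$; a section meets each fiber in exactly one smooth point and hence meets each ${\rm I}_6$-fiber in a single component, which translates back downstairs to $s$ passing through exactly one singular point in each of two chosen ${\rm I}_3$-fibers and being tangent to the others. This lifting argument removes any ambiguity and is the natural place to ground the statement.
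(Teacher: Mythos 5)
Your proof is correct and follows essentially the same route as the paper, which simply notes (as in Lemma \ref{possiblebisection1}) that by Lemma \ref{nodal} a special bi-section passes through exactly two canonical points, which by Lemma \ref{possibleSing5} are the twelve singular points of the four fibers of type ${\rm I}_3$, and the rest is immediate. The only quibble is your intermediate remark that the intersection with the remaining three fibers ``would be forced to zero'' --- the clean reason the two points cannot lie on one fiber is that passing through a node of an ${\rm I}_3$-fiber already exhausts the intersection number $\langle s,F\rangle=2$ with that fiber, so a second node on the same fiber would give $\langle s,F\rangle\geq 4$; your lifting-to-a-section argument also settles this correctly.
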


\begin{lemma}\label{possiblebisection2} 
In case that $f$ has singular fibers of type $({\rm I}_6, {\rm IV}, {\rm I}_2)$, the fiber of type
${\rm IV}$ is multiple and 
the following two cases occur$:$
\begin{itemize}
\item[$(1)$] $s$ passes through a singular point of the fiber of type ${\rm I}_6$.
\item[$(2)$] $s$ passes through a singular point of the fiber of type ${\rm I}_2$.
\end{itemize}
\end{lemma}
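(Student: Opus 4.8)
The plan is to pass to the induced elliptic fibration $g:Y\to{\bf P}^1$ on the supersingular $K3$ surface and to exploit that $g$ has a section. By the correspondence of fiber types recorded in the table of Section \ref{sec4}, the fibration $f=({\rm I}_6,{\rm IV},{\rm I}_2)$ is induced by $g=({\rm I}_{12},{\rm I}_4,{\rm IV}^*)$, and by Lemma \ref{possibleSing5}$(3)$ the twelve canonical points of $X$ are the six nodes of the ${\rm I}_6$-fiber, the two nodes of the ${\rm I}_2$-fiber, and four points on the ${\rm IV}$-fiber (its singular point together with one point on each of its three components). First I would record, via Lemma \ref{possibleSing} and its proof, which components of each $g$-fiber are integral, equivalently contracted on the way down to $X$: half of the twelve components of ${\rm I}_{12}$ (alternately), half of the four components of ${\rm I}_4$, and in ${\rm IV}^*$ exactly the central multiplicity-$3$ component $E_0$ together with the three simple ends $E_1,E_2,E_3$. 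The canonical point through which $s$ passes on a given fiber is then dictated by which component the lift of $s$ meets there: $s$ passes through the canonical point attached to a contracted component $E$ if and only if the section $\tilde s$ of $g$ (the reduced preimage of $s$, a section as for any special bi-section, cf. the proofs in Section \ref{sec3}) meets $E$.

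For the dichotomy $(1)$--$(2)$ I would argue exactly as in the proof of Lemma \ref{bi-section1}$(3)$. Since $g$ has a section (Theorem \ref{ElkiesSchutt}), $\tilde s$ meets each fiber transversally in a single multiplicity-$1$ component. In ${\rm IV}^*$ the multiplicity-$1$ components are precisely the three ends $E_1,E_2,E_3$, all of which are integral; hence $\tilde s$ necessarily meets a contracted end, so $s$ always passes through one of the three non-singular canonical points lying on a component of the ${\rm IV}$-fiber. By Lemma \ref{nodal} the nodal curve $s$ passes through exactly two canonical points, so precisely one canonical point remains, and it lies on the unique further component met by $\tilde s$: either an integral component of ${\rm I}_{12}$, giving case $(1)$ (a singular point of the ${\rm I}_6$-fiber), or an integral component of ${\rm I}_4$, giving case $(2)$ (a singular point of the ${\rm I}_2$-fiber). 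The two possibilities are mutually exclusive, since meeting integral components of both ${\rm I}_{12}$ and ${\rm I}_4$ would produce a third canonical point, contradicting Lemma \ref{nodal}.

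It remains to show that the ${\rm IV}$-fiber is multiple, which is the main point. By Proposition \ref{multi-fiber} a multiple fiber is never of multiplicative type, so the ${\rm I}_6$- and ${\rm I}_2$-fibers are simple and only the additive ${\rm IV}$-fiber can be multiple. To see that it actually is, I would compute the image of the ${\rm IV}^*$-fiber directly with Proposition \ref{insep}, writing the canonical covering as $Y\xrightarrow{\pi}Y^{D}\xrightarrow{\phi}X$. Since a generic fiber of $g$ is non-multiple it is non-integral, so the fiber class of the quotient fibration on $Y^{D}$ pulls back to twice that of $g$, i.e. $\pi^{*}\Phi=2G$, where $G=E_1+E_2+E_3+2(F_1+F_2+F_3)+3E_0$ is the ${\rm IV}^*$-fiber and $\Phi$ its image on $Y^{D}$. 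Writing $\Phi=\sum a_i\bar F_i+\sum b_j\bar E_j$ and using $\pi^{*}\bar F_i=2F_i$ for the non-integral $F_i$ and $\pi^{*}\bar E_j=E_j$ for the integral $E_j$, the identity $\pi^{*}\Phi=2G$ forces $a_i=2$ and $b_j\in\{2,6\}$, so every coefficient of $\Phi$ is even and $\Phi=2\Gamma$. Contracting the $(-1)$-curves $\bar E_j$ by $\phi$, the fiber of $f$ over this point becomes $2F_{\rm IV}$, so the ${\rm IV}$-fiber is multiple.

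The routine parts are the identification of $g$ and of its integral components, together with the section/counting argument for $(1)$--$(2)$, which simply copies Lemma \ref{bi-section1}. The genuine obstacle is the multiplicity statement: Proposition \ref{multi-fiber} only shows the ${\rm IV}$-fiber \emph{may} be the multiple one (for a classical $X$ both multiple fibers could a priori be smooth), so one really needs the pullback computation with Proposition \ref{insep} to conclude divisibility by two. The one place where care will be required is keeping track of the inseparable factors ($\pi^{*}\bar F_i=2F_i$ versus $\pi^{*}\bar E_j=E_j$) in that computation.
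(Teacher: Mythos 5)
Your proof is correct, and its treatment of the dichotomy $(1)$--$(2)$ is essentially the paper's argument: the lift $\tilde s$ is a section of $g$, it must meet a multiplicity-one component of the fibre of type ${\rm IV}^*$, all three of which are integral, and Lemma \ref{nodal} then leaves room for exactly one further canonical point, which lies on an integral component of ${\rm I}_{12}$ (case $(1)$) or of ${\rm I}_4$ (case $(2)$). Where you genuinely diverge is the multiplicity of the ${\rm IV}$-fibre. The paper extracts it from the very same observation at no extra cost: since $s$ meets the reduced ${\rm IV}$-configuration in a single transversal point, its intersection number with the reduced fibre is $1$, while $s\cdot F=2$ for the full fibre class, so $F=2F_{\mathrm{red}}$. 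You instead compute $\pi^{*}\Phi=2G$ on $Y^{D}$ and match coefficients via Proposition \ref{insep}; this is valid and makes the multiplicity visible on the nose, but it is heavier than needed, and the one step you justify loosely is the assertion that the generic fibre of $g$ is non-integral (``non-multiple hence non-integral'' is not an implication). The correct reason is that if the generic fibre were integral, $D$ would be tangent to every fibre and hence every component of every fibre would be integral, contradicting the fact that only half of the components of the ${\rm I}_{12}$-fibre are contracted. With that small repair, both routes close the argument; the paper's intersection-number observation is the more economical of the two.
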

\begin{proof}
Since the pre-image of the fiber of type ${\rm IV}$ on $Y$ is of type ${\rm IV}^*$ and
three simple components of the fiber of type ${\rm IV}^*$ are integral (see the proof of 
Lemma \ref{possibleSing}, (4)), 
$s$ passes through exactly one canonical point on the fiber of type ${\rm IV}$ and hence this fiber is multiple.
Since $s$ passes through another canonical point, either the assertion (1) or (2) holds.
\end{proof}

\begin{lemma}\label{possiblebisection3} 
In case that $f$ has singular fibers of type $({\rm I}_8, {\rm III})$, the fiber of type ${\rm III}$ is
multiple and $s$ passes through a singular point of the fiber of type ${\rm I}_8$.
\end{lemma}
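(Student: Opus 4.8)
The plan is to transfer the question to the covering $K3$ surface $Y$ and then count canonical points using Lemma~\ref{nodal}. By the correspondence between the singular fibers of $f$ and those of the induced fibration $g : Y \to {\bf P}^1$ recorded after Lemma~\ref{possibleSing}, a fibration $f$ of type $({\rm I}_8,{\rm III})$ is induced by the fibration $g$ of type $({\rm I}_{16},{\rm I}_1^*)$, which carries a section by Theorem~\ref{ElkiesSchutt}. First I would record, from the proof of Lemma~\ref{possibleSing}(2), that since $\bar{Y}$ has only rational double points of type $A_1$ the four simple components of the ${\rm I}_1^*$ fiber are exactly the integral curves; they contract to the four canonical points lying on the ${\rm III}$ fiber (two simple points on each of its two components, by Lemma~\ref{possibleSing5}(2)), while the two central components of ${\rm I}_1^*$ descend to the two components of the ${\rm III}$ fiber. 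Likewise the eight canonical points on the ${\rm I}_8$ fiber are precisely its eight singular points.

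Next I would lift the special bi-section. As in the proofs of Lemmas~\ref{bi-section1} and~\ref{possiblebisection2}, the pullback $\tilde{s}$ of $s$ is the irreducible curve on $Y$ mapping to $s$ with degree one (Lemma~\ref{nodal}), and it is a section of $g$. A section meets the reducible fiber ${\rm I}_1^*$ in a single point lying on one of its simple components; since those simple components are the integral ones, $\tilde{s}$ passes through exactly one of the contracted components, and hence $s$ passes through exactly one canonical point on the ${\rm III}$ fiber.

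The decisive step is to deduce from this that the ${\rm III}$ fiber is multiple, and here I would argue as in Lemma~\ref{possiblebisection2}. Since $\tilde{s}$ meets ${\rm I}_1^*$ only at that one simple component and avoids the two central components, its image $s$ meets the reduced ${\rm III}$ fiber only at the single canonical point, and transversally, so $s \cdot F_0 = 1$ where $F_0$ denotes the reduced ${\rm III}$ fiber. As $s$ is a bi-section, $s \cdot F = 2$ for the general fiber $F$; writing $F \sim m F_0$ with $m$ the multiplicity of the ${\rm III}$ fiber yields $m = s \cdot F = 2$, so that fiber is multiple. I expect the only genuine subtlety to be the transversality of $s$ with $F_0$ at the canonical point --- equivalently, the local behaviour of the contraction of the integral $(-2)$-curve under the purely inseparable quotient --- which is exactly the point already used implicitly in Lemma~\ref{possiblebisection2}; as a cross-check one may invoke Proposition~\ref{multi-fiber}, by which a multiple fiber must be of additive type or smooth, the only additive fiber of $f$ being the ${\rm III}$ fiber.

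Finally, Lemma~\ref{nodal} guarantees that $s$ passes through exactly two canonical points. One of them lies on the ${\rm III}$ fiber by the above, so the second must lie on the ${\rm I}_8$ fiber; since the canonical points there are precisely its singular points, $s$ passes through a singular point of the fiber of type ${\rm I}_8$, which completes the argument.
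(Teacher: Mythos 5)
Your proof is correct and follows essentially the same route as the paper's: identify the four canonical points on the fiber of type $\rm III$ as the images of the four simple (integral) components of the $\rm I_1^*$ fiber of $g$, lift $s$ to a section of $g$ meeting exactly one of them to conclude that $s$ meets the reduced $\rm III$ fiber with multiplicity one (hence the fiber is multiple), and then use Lemma \ref{nodal} to place the second canonical point of $s$ at a singular point of the $\rm I_8$ fiber. The paper compresses all of this into two sentences; you have merely made explicit the intersection-number bookkeeping it leaves implicit.
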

\begin{proof}
Since there are exactly four canonical points on the fiber of type ${\rm III}$ which are the images of
the four simple components of the fiber of type ${\rm I}_1^*$  (see the proof of 
Lemma \ref{possibleSing}, (2)), the fiber of type ${\rm III}$ is multiple.
The bi-section $s$ passes through another canonical point and hence the remaining assertion follows.
\end{proof}

\begin{lemma}\label{possiblebisection6} 
In case that $f$ has singular fibers of type $({\rm I}_4, {\rm I}_4, {\rm III})$, 
the fiber of type ${\rm III}$ is multiple. The bi-section $s$ passes through a singular point of 
a fiber of type ${\rm I}_4$ and is tangent to a component of the other fiber of type ${\rm I}_4$.
\end{lemma}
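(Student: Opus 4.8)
The plan is to run the same argument as in the proofs of Lemmas \ref{possiblebisection2} and \ref{possiblebisection3}, passing to the induced fibration on $Y$ and using the intersection formulas of Proposition \ref{insep}. First I would record that, by the table following Lemma \ref{possibleSing}, the elliptic fibration $g:Y\to{\bf P}^1$ induced by $f$ is of type $({\rm I}_8,{\rm I}_8,{\rm I}_1^*)$, and that the fiber of type ${\rm III}$ on $X$ is the image of the ${\rm I}_1^*$-fiber of $g$. By the proof of Lemma \ref{possibleSing}(2) (equivalently Lemma \ref{possibleSing5}(2)), in the present case all four simple components of the ${\rm I}_1^*$-fiber are integral with respect to the derivation and are contracted to the four $A_1$-points lying over the ${\rm III}$-fiber; their images are four canonical points on the ${\rm III}$-fiber, two on each of its two components $\bar{C},\bar{D}$, where $\bar{C},\bar{D}$ are the images of the two multiplicity-$2$ components $C,D$ of the ${\rm I}_1^*$-fiber.

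Next I would lift the special bi-section. Since every genus one fibration on $Y$ has a section (Theorem \ref{ElkiesSchutt}) and, by Lemma \ref{nodal}, the strict transform $\tilde{s}$ of $s$ maps to $s$ with degree one (so $\tilde{s}$ is non-integral), a short computation with Proposition \ref{insep} gives $\tilde{s}\cdot F_g=1$ for a general fiber $F_g$ of $g$; thus $\tilde{s}$ is a section of $g$. A section meets the ${\rm I}_1^*$-fiber transversally at a single point lying on one of its simple components, hence on one of the four integral components above, say $A$, with $A$ meeting $C$.

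The main step will be to prove that the ${\rm III}$-fiber is multiple, and I would do this by computing $s\cdot({\rm III})$ directly. Since $\tilde{s}$ meets the ${\rm I}_1^*$-fiber only along $A$, one has $\tilde{s}\cdot C=\tilde{s}\cdot D=0$; applying the relations of Proposition \ref{insep} ($C=\pi^{*}C'$ for integral curves, $2C=\pi^{*}C'$ for non-integral ones) and accounting for the contraction of the $(-1)$-curves coming from the integral components, one finds $s\cdot\bar{C}=1$ and $s\cdot\bar{D}=0$, so $s\cdot({\rm III})=1$. As a bi-section meets a non-multiple fiber with multiplicity $2$, the ${\rm III}$-fiber must be multiple. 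This bookkeeping through the purely inseparable quotient is the delicate point, since a transversal intersection upstairs need not stay transversal downstairs (indeed it is precisely the inseparability that turns the transversal crossing of $C$ and $D$ into the tangency of the ${\rm III}$-fiber); I expect this to be the main obstacle and would carry it out exactly as in the proof of Lemma \ref{possibleSing}.

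Finally I would read off the behaviour of $s$. By Lemma \ref{nodal} the bi-section passes through exactly two canonical points, one of which is the point of the ${\rm III}$-fiber found above; hence the second lies on one of the two ${\rm I}_4$-fibers, and by Lemma \ref{possibleSing5}(2) the canonical points there are precisely the singular points, so $s$ passes through a singular point of one ${\rm I}_4$-fiber, which accounts for the full intersection $s\cdot({\rm I}_4)=2$ on that fiber. Since ${\rm I}_4$ is of multiplicative type it is not a multiple fiber (Proposition \ref{multi-fiber}), so $s$ meets the other ${\rm I}_4$-fiber with multiplicity $2$ as well; as $s$ can pass through no further canonical point, it meets this fiber away from its nodes, and is therefore tangent to one of its components at a simple point. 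This yields the stated conclusion.
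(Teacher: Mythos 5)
Your proof is correct and follows essentially the same route as the paper, whose own proof of this lemma is just the remark that the argument of Lemma \ref{possiblebisection3} applies verbatim: lift $s$ to a section of the induced fibration of type $({\rm I}_8,{\rm I}_8,{\rm I}_1^*)$, use the integrality of the four simple components of the ${\rm I}_1^*$-fiber to see that $s$ meets the ${\rm III}$-fiber with multiplicity one (hence that fiber is multiple), and then locate the second canonical point via Lemma \ref{nodal}. Your explicit intersection-number bookkeeping through the inseparable quotient and the contractions simply makes the paper's terse canonical-point count precise.
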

\begin{proof}
The proof is the same as that of Lemma \ref{possiblebisection3}. 
\end{proof}

Next, in the following lemmas \ref{possiblebisection7}, 
\ref{possiblebisection8}, \ref{possiblebisection9}, \ref{possiblebisection10}, 
\ref{possiblebisection11}, we 
assume that $\bar{Y}$ has a rational double point of type $D_4$ and 8 rational double points of type
$A_1$.  
%Let $f: X \to {\bf P}^1$ be a special elliptic fibration and denote by $s$ a special bi-section.
Denote by $p_0$ the canonical point on $X$ which is the image of the rational double point of type $D_4$ 
on $\bar{Y}$.

\begin{lemma}\label{possiblebisection7} 
In case that $f$ has singular fibers of type $({\rm I}_8, {\rm III})$, 
the fiber of type ${\rm III}$ is not multiple and its singular point is $p_0$. The bi-section $s$ passes 
through the singular point of the fiber of type ${\rm III}$ and is tangent to a component of the fiber of type ${\rm I}_8$.
\end{lemma}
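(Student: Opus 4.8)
The plan is to follow the scheme of the proof of Lemma~\ref{possiblebisection3}, adapted to the $D_4$ point lying over $p_0$. First I would record the global data. Since $\bar{Y}$ has one $D_4$ point and eight $A_1$ points and $f$ is of type $({\rm I}_8,{\rm III})$, the classification following Lemma~\ref{possibleSing} forces the induced fibration $g:Y\to{\bf P}^1$ to be of type $({\rm I}_{16},{\rm I}_1^*)$, and Lemma~\ref{possibleSing4}(1) places the eight $A_1$ points over the eight nodes of the ${\rm I}_8$ fibre and the $D_4$ point over the singular point of the ${\rm III}$ fibre; this already yields that the singular point of ${\rm III}$ is $p_0$. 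By the $D_4$ case in the proof of Lemma~\ref{possibleSing}(2), the four curves of the ${\rm I}_1^*$ fibre contracted to $p_0$ are one double (multiplicity two) component, playing the role of the centre of the $D_4$, together with the two simple components attached to it and the second double component, while the two remaining simple components of ${\rm I}_1^*$ are non-integral and map onto the two components of the ${\rm III}$ fibre. In particular $p_0$ is the only canonical point on the ${\rm III}$ fibre, in contrast with the four canonical points occurring in Lemma~\ref{possiblebisection3}.

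Next I would pass to the covering $K3$ surface. By Theorem~\ref{ElkiesSchutt} the fibration $g$ has a section, and, exactly as in Lemmas~\ref{bi-section1} and \ref{possiblebisection3}, the special bi-section $s$ lifts to a section $\tilde{s}$ of $g$. A section meets each fibre in a single point lying on a simple component, so $\tilde{s}$ meets the ${\rm I}_{16}$ fibre in one component and meets one of the four simple components of the ${\rm I}_1^*$ fibre. The heart of the matter is to show that the latter is one of the two simple components contracted to $p_0$, equivalently that $s$ passes through $p_0$. For this I would invoke Lemma~\ref{nodal}: as a nodal curve, $s$ carries exactly two blow-up centres of the resolution (counting infinitely near ones). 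Every canonical point other than $p_0$ is an $A_1$ point on the ${\rm I}_8$ fibre, and $s$ passes through such a point precisely when $\tilde{s}$ meets the corresponding integral component of the ${\rm I}_{16}$ fibre; since $\tilde{s}$ is a section it meets ${\rm I}_{16}$ in only one component, so $s$ passes through at most one $A_1$ point, contributing at most one blow-up. Hence a blow-up must occur at $p_0$, so $s$ passes through $p_0$ and $\tilde{s}$ meets one of the two multiplicity-one leaves of the $D_4$. Resolving the $D_4$ then accounts for both blow-ups of $s$ at $p_0$ (the blow-up of $p_0$ itself and that of the infinitely near point on that leaf), so by Lemma~\ref{nodal} $s$ meets no $A_1$ point at all; consequently $\tilde{s}$ meets a non-integral component of the ${\rm I}_{16}$ fibre transversally, and by Proposition~\ref{insep} the inseparable double cover doubles this intersection, so $s$ is tangent to a single component of the ${\rm I}_8$ fibre.

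Finally I would check that the ${\rm III}$ fibre is not multiple. As $s$ is a bi-section it meets the fibre divisor over the ${\rm III}$ point with total multiplicity two, and this fibre is multiple if and only if $s$ meets its reduction $\bar{R}_1+\bar{R}_2$ with multiplicity one. Since $s$ meets the ${\rm III}$ fibre only at $p_0$, I would compute the local intersection of $s$ with $\bar{R}_1+\bar{R}_2$ there from the $D_4$ resolution: $\tilde{s}$ meets one multiplicity-one leaf, whereas $\bar{R}_1$ and $\bar{R}_2$ are the images of the two simple components attached to the second double leaf, and the $D_4$ intersection data give local multiplicity one with each branch, whence $s\cdot(\bar{R}_1+\bar{R}_2)=2$ and the fibre is not multiple. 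I expect this local computation at the $D_4$ point to be the main obstacle: one must verify that a section through one leaf yields a bi-section meeting \emph{both} branches of the ${\rm III}$ fibre simply at $p_0$, rather than a single branch, which would instead produce a multiple fibre. The counting argument of the second paragraph is what rules out that alternative, in which $s$ would run through an $A_1$ node of ${\rm I}_8$ and make ${\rm III}$ multiple exactly as in the purely $A_1$ situation of Lemma~\ref{possiblebisection3}.
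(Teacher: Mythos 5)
Your proof is correct and follows essentially the same route as the paper's: both identify the integral versus non-integral simple components of the ${\rm I}_1^*$ fibre via Lemma \ref{possibleSing}(2), lift $s$ to a section of $g$, and use the two-blow-up count of Lemma \ref{nodal} together with the bi-section degree to force $s$ through $p_0$ and to make it tangent to a component of ${\rm I}_8$. Your version is somewhat more detailed (in particular the explicit check that $s$ meets both branches of the ${\rm III}$ fibre at $p_0$, which the paper leaves implicit in "Hence $F$ is not multiple"), but the ideas coincide.
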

\begin{proof}
Since the two components of the fiber $F$ of type ${\rm III}$ correspond to two simple components of
the fiber of type ${\rm I}_1^*$ (see the proof of 
Lemma \ref{possibleSing}, (2)), $s$ is tangent to a component of $F$ or passes through the singular 
point of $F$.  Hence $F$ is not multiple.  If $s$ is tangent to $F$, then $s$ passes through two singular points
of the fiber of type ${\rm I}_8$ which is impossible because $s$ is a bi-section.  Therefore $s$ passes
through the singular point of $F$ and is tangent to a component of the fiber of type ${\rm I}_8$.
\end{proof}

\begin{lemma}\label{possiblebisection8} 
In case that $f$ has singular fibers of type $({\rm I}_6, {\rm III})$, 
the fiber of type ${\rm III}$ is multiple, $p_0$ is a simple point of a component of this fiber, and 
the following two cases occur$:$
\begin{itemize}
\item[$(1)$] $s$ passes through a singular point of the fiber of type ${\rm I}_6$.
\item[$(2)$] $s$ is tangent to a component of the fiber of type ${\rm I}_6$.
\end{itemize}
\end{lemma}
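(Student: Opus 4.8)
The plan is to transport the whole configuration to the supersingular $K3$ surface $Y$ and read it off there. By the table following Lemma \ref{possibleSing}, an elliptic fibration $f$ of type $({\rm I}_6,{\rm III})$ is induced by the fibration $g:Y\to{\bf P}^1$ of type $({\rm I}_{12},{\rm I}_3^*)$. Since by Lemma \ref{possibleSing}, (1) the ${\rm I}_{12}$ fiber contributes only rational double points of type $A_1$, the assumed rational double point of type $D_4$ must come from the ${\rm I}_3^*$ fiber; hence the ${\rm I}_3^*$ fiber falls into the $D_4\oplus A_1^{\oplus 2}$ subcase of Lemma \ref{possibleSing}, (3). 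First I would recall from the proof of that lemma the precise integral/non-integral pattern on ${\rm I}_3^*$: the five integral curves are the four simple (multiplicity one) components together with one multiplicity-two component; one multiplicity-two end component together with its three neighbours is contracted to the $D_4$-point $p_0$; the two remaining simple components are contracted to the two $A_1$-points; and the two remaining multiplicity-two components, call them $D_1,D_2$ on $Y$, survive.

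Next I would identify the ${\rm III}$ fiber and establish that $p_0$ is a simple point of one of its components. The images $\Theta_1,\Theta_2$ of $D_1,D_2$ are the two components of the image fiber on $X$. Since $D_1,D_2$ are non-integral, Proposition \ref{insep} applies to them, and the degree-two inseparable quotient doubles the transversal intersection $D_1\cdot D_2=1$ on $Y$ to $\Theta_1\cdot\Theta_2=2$ on $X$; thus the image fiber is of type ${\rm III}$ rather than of type ${\rm I}_2$. The component coming from the multiplicity-two curve adjacent to a $D_4$-arm passes through $p_0$, and this point is distinct from the tangency point $\Theta_1\cap\Theta_2$; hence $p_0$ is a simple point of a component of the ${\rm III}$ fiber. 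To see that this fiber is multiple I would argue exactly as in Lemma \ref{possiblebisection3}: all four simple components of the ${\rm I}_3^*$ fiber are integral, so every canonical point lying on the image fiber is the image of a simple component contracted during the resolution, and this forces the fiber to be a multiple fiber $2{\rm III}$.

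For the bi-section I would use that, by Theorem \ref{ElkiesSchutt}, $g$ has a section, and that the lift $\tilde s$ of the special bi-section $s$ is such a section, its image being $s$ and the map $\tilde s\to s$ being of degree one by Lemma \ref{nodal}. A section meets the ${\rm I}_{12}$ fiber in a single point lying on a single component; under the degree-two correspondence its image is a single point of the ${\rm I}_6$ fiber at which $s$ meets that fiber with multiplicity two. If $\tilde s$ meets a component of ${\rm I}_{12}$ that is contracted in the resolution, the image point is an $A_1$-node of the ${\rm I}_6$ fiber and $s$ passes through a singular point of it, which is case (1); if $\tilde s$ meets a non-contracted component, the image is a smooth point and $s$ is tangent to a component of the ${\rm I}_6$ fiber, which is case (2). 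Exhibiting sections realising each of the two possibilities then shows both cases occur.

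The main obstacle is the passage from $Y$ to $X$ through the purely inseparable quotient combined with the contraction of the $D_4$- and $A_1$-configurations: one must verify carefully that the intersection numbers double in the expected way, so that the image fiber really is of type ${\rm III}$ and not ${\rm I}_2$, and that the multiplicity of the ${\rm III}$ fiber is correctly pinned down. These are the elementary but lengthy local computations; once the integral/non-integral pattern on the ${\rm I}_3^*$ fiber supplied by Lemma \ref{possibleSing} is in hand, everything else follows formally from Lemma \ref{nodal}, Proposition \ref{insep} and Theorem \ref{ElkiesSchutt}.
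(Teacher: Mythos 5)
Your argument is correct and follows essentially the same route as the paper's: lift $s$ to a section of the induced fibration of type $(\mathrm{I}_{12},\mathrm{I}_3^*)$ on $Y$, use the integral/contracted pattern on the $\mathrm{I}_3^*$ fiber from Lemma \ref{possibleSing}, (3) to see that $s$ meets the fiber of type $\mathrm{III}$ transversally at a single canonical simple point (whence the multiplicity of that fiber and the location of $p_0$), and then obtain the two cases from Lemma \ref{nodal} according to whether the section hits a contracted or a non-contracted component of the $\mathrm{I}_{12}$ fiber. One small caveat: deducing that the image fiber is of type $\mathrm{III}$ rather than $\mathrm{I}_2$ from $\Theta_1\cdot\Theta_2=2$ alone is not conclusive (both types have intersection number $2$; one must also observe that the two components meet set-theoretically in only one point), but this is immaterial here since the fiber type is part of the hypothesis of the lemma.
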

\begin{proof}
The pullback of the fiber $F$ of type ${\rm III}$ to $Y$ is of type ${\rm I}_3^*$, and the image of
the cycle of type $D_4$ is nothing but $p_0$ (see the proof of 
Lemma \ref{possibleSing}, (3)).  Since four simple components of the fiber of type ${\rm I}_3^*$ are integral, $s$ meets $F$ at a simple point on $F$ 
transversally. Therefore $F$ is multiple.  If $s$ passes through $p_0$, then $s$ is tangent to a component
of the fiber $F'$ of type ${\rm I}_6$.  If $s$ passes through a canonical point of $F$ not equal $p_0$, then $s$ passes through a singular point of $F'$.
\end{proof}

\begin{lemma}\label{possiblebisection9} 
In case that $f$ has singular fibers of type $({\rm IV}, {\rm IV}, {\rm IV})$, 
two fibers of type ${\rm IV}$ are multiple and $p_0$ is the singular point of the non multiple fiber of type ${\rm IV}$. The bi-section $s$ is tangent to a component of the non-multiple fiber of type ${\rm IV}$.
\end{lemma}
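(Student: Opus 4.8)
The plan is to lift $f$ to the induced fibration $g:Y\to {\bf P}^1$ and to read off the multiplicities and the tangency from the simple components of the fibers of $g$, exactly in the spirit of the proofs of Lemmas \ref{possiblebisection7} and \ref{possiblebisection8}. By the table following Lemma \ref{possibleSing} together with Lemma \ref{possibleSing4}, the fibration $g$ is of type $({\rm IV}^*,{\rm IV}^*,{\rm IV}^*)$; and since $\bar Y$ has one rational double point of type $D_4$ and eight of type $A_1$, exactly one of the three ${\rm IV}^*$-fibers falls in the $D_4$-case of Lemma \ref{possibleSing}(4) (the central component together with the three components of multiplicity $2$ contracts to the $D_4$-point, the three simple components being non-integral), while the other two fall in the four-$A_1$-case (the central component and the three simple components are integral and contract to four $A_1$-points, the three components of multiplicity $2$ being non-integral). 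First I would note that the three non-integral simple components of the $D_4$-fiber descend to the three components of one ${\rm IV}$-fiber $F$ on $X$, all passing through the image of the contracted $D_4$-cycle; hence that image is the singular point of $F$, i.e.\ it is $p_0$. This isolates the fiber claimed to be non-multiple and locates $p_0$ on it.

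Next I would use a section $\tilde s$ of $g$, which exists by Theorem \ref{ElkiesSchutt} and whose image is the special bi-section $s$ (by Lemma \ref{nodal} the map $\tilde s\to s$ has degree one). A section meets every fiber transversally at one point lying on a simple component, and in an ${\rm IV}^*$-fiber the simple components are its three tips. For the two four-$A_1$-fibers these tips are integral, so by Proposition \ref{insep}(1) they map birationally to canonical points; the projection formula along $h=\bar\pi\circ\rho$ then gives $s\cdot F_{\rm red}=\tilde s\cdot h^{*}F_{\rm red}=1$, and since $s$ is a bi-section ($s\cdot F=2$) each of these two ${\rm IV}$-fibers must be a multiple fiber $2{\rm IV}$, met by $s$ transversally at the canonical point carried by the integral tip. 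For the $D_4$-fiber, the tip met by $\tilde s$ is non-integral and is itself a component $C_1$ of $F$; Proposition \ref{insep}(2) produces the factor $2$ in $h^{*}C_1$, so the same computation yields $s\cdot C_1=2$ and $s\cdot C_j=0$ for $j\neq 1$. Since $s$ cannot pass through the triple point $p_0$ (that would force $s\cdot F\geq 3$), this intersection is concentrated at a single simple point of $C_1$; thus $s$ is tangent to a component of $F$ at a simple point and $F$ is non-multiple.

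This produces exactly two multiple fibers and one non-multiple fiber, consistent with Proposition \ref{multi-fiber}(1) for a classical Enriques surface, with $p_0$ the singular point of the unique non-multiple ${\rm IV}$-fiber and $s$ tangent to one of its components — which is the assertion. The step I expect to be the main obstacle is making the integral/non-integral dichotomy rigorous: namely that the multiplicity of a fiber of $f$ is governed by the integrality of the simple component of the corresponding $g$-fiber met by $\tilde s$. This requires combining the ${\bf Q}$-pullback through the resolution $\rho$ with the doubling $\bar\pi^{*}C_i=2\bar\gamma_i$ coming from Proposition \ref{insep}(2), so that $h^{*}C_i$ is an honest integral divisor before one intersects with $\tilde s$; the remaining transversality bookkeeping is identical to that already carried out in Lemmas \ref{possiblebisection7} and \ref{possiblebisection8}.
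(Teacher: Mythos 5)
Your proof is correct and takes essentially the same route as the paper: both arguments lift $s$ to the section $\tilde s$ of the induced fibration $g$ of type $({\rm IV}^*,{\rm IV}^*,{\rm IV}^*)$ and read off the location of $p_0$, the multiplicities of the fibers, and the tangency of $s$ from the integral/non-integral dichotomy in the proof of Lemma \ref{possibleSing}(4); the paper merely states these conclusions more tersely, leaving the projection-formula bookkeeping implicit. (One cosmetic slip: the integral tips are \emph{contracted} to canonical points --- by Proposition \ref{insep}(1) they map onto their images in $Y^{D}$ with degree $2$, not birationally --- but this does not affect the argument.)
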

\begin{proof}
Since $p_0$ is the image of the cycle of type $D_4$ on a fiber of type ${\rm IV}^*$, $p_0$ is the
singular point of a fiber of type ${\rm IV}$ (see the proof of 
Lemma \ref{possibleSing}, (4)).  Since $s$ passes through a canonical and simple point of the 
other two singular fibers of type ${\rm IV}$, these two singular fibers are multiple.  The remaining
assertion is obvious.
\end{proof}

\begin{lemma}\label{possiblebisection10} 
In case that $f$ has singular fibers of type $({\rm I}_4, {\rm I}_4, {\rm III})$, 
the fiber of type ${\rm III}$ is not multiple, $p_0$ is its singular point and  
the following two cases occur$:$
\begin{itemize}
\item[$(1)$] $s$ passes through a singular point of two fibers of type ${\rm I}_4$.
\item[$(2)$] $s$ passes through $p_0$ and is tangent to the fibers of type ${\rm I}_4$ at a simple point.
\end{itemize}
\end{lemma}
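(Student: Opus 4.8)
The plan is to follow the pattern of Lemmas \ref{possiblebisection7} and \ref{possiblebisection8}, transferring everything to the induced fibration on $Y$. First I would record that, by the table following Lemma \ref{possibleSing}, the fibration $g:Y\to{\bf P}^1$ induced by $f$ has singular fibers of type $({\rm I}_8,{\rm I}_8,{\rm I}_1^*)$. Since a fiber of type ${\rm I}_8$ contributes only singularities of type $A_1$ (Lemma \ref{possibleSing}(1)), the rational double point of type $D_4$ must come from the fiber of type ${\rm I}_1^*$, via the second alternative in Lemma \ref{possibleSing}(2). By the proof of that lemma, among the six components of ${\rm I}_1^*$ the two double components together with two of the simple components form the $D_4$ cluster that is contracted; its image is the canonical point $p_0$. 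The remaining two simple components are non-integral, they both meet the integral double component of the cluster, and they are disjoint on $Y$; hence their images on $X$ are the two components of the fiber of type ${\rm III}$ and they meet only at $p_0$. This already gives that $p_0$ is the singular point of the fiber of type ${\rm III}$.

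Next I would lift the special bi-section. By Theorem \ref{ElkiesSchutt} the fibration $g$ has a section, and as in the proof of Lemma \ref{bi-section1} the pullback $\tilde s$ of $s$ is such a section; it meets the fiber of type ${\rm I}_1^*$ at exactly one of its four simple components and meets each fiber of type ${\rm I}_8$ at exactly one component. To see that the fiber of type ${\rm III}$ is not multiple I would compute $s\cdot{\rm III}_{\rm red}$: if $\tilde s$ meets one of the two non-cluster simple components, then $s$ is tangent at a simple point of one component of ${\rm III}$ (the purely inseparable degree-two covering doubles the intersection along a non-integral curve), so $s\cdot{\rm III}_{\rm red}=2$; if $\tilde s$ meets one of the two cluster simple components, then $s$ passes through $p_0$ and meets each of the two components of ${\rm III}$ once there, again giving $s\cdot{\rm III}_{\rm red}=2$. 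In either case the reduced fiber meets the bi-section with multiplicity $2$, so it is not multiple. For the fibers of type ${\rm I}_4$ no such argument is needed: being of multiplicative type they cannot be multiple by Proposition \ref{multi-fiber}, so $s\cdot{\rm I}_4=2$ for each of them.

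It remains to separate the two cases, and here I would use the bookkeeping of canonical points. By Lemma \ref{nodal} the nodal curve $s$ has exactly two blow-ups along the minimal resolution. Passing through an $A_1$-node of an ${\rm I}_4$ fiber accounts for one blow-up, while passing through the $D_4$-point $p_0$ accounts for both, and meeting a fiber tangentially at a simple point contributes nothing. Thus exactly two configurations survive. Either $\tilde s$ meets a non-cluster simple component of ${\rm I}_1^*$, so $s$ avoids $p_0$ and the two blow-ups lie on the ${\rm I}_8$ fibers; then $\tilde s$ meets an integral component of each ${\rm I}_8$ and $s$ passes through a singular point of each fiber of type ${\rm I}_4$ (Case (1)), the two nodes lying on different ${\rm I}_4$ fibers since passing through two nodes of a single ${\rm I}_4$ would force $s\cdot{\rm I}_4\ge 4$. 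Or $\tilde s$ meets a cluster simple component, so $s$ passes through $p_0$, the two blow-ups are absorbed there, and $s$ must meet each ${\rm I}_4$ away from its nodes, that is, tangentially at a simple point (Case (2)). The mixed possibility, $s$ through $p_0$ and through one $A_1$-node, is excluded because it would require three blow-ups.

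The step I expect to be the main obstacle is the last one: justifying cleanly that passing through the $D_4$-point $p_0$ consumes both of the two blow-ups supplied by Lemma \ref{nodal} (equivalently, that a nodal curve meeting the $D_4$ configuration on $Y$ cannot simultaneously pass through an $A_1$-node), and that it does so compatibly with $\tilde s$ being a section of $g$ over a cluster component. This requires unwinding the resolution of the $D_4$ singularity rather than the elementary $A_1$ bookkeeping used in Lemmas \ref{possiblebisection1}--\ref{possiblebisection6}; everything else is a direct transcription of the structure of the fiber of type ${\rm I}_1^*$ from the proof of Lemma \ref{possibleSing}.
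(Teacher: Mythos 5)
Your argument is correct and follows the same route as the paper, whose proof of this lemma consists of referring to Lemma \ref{possiblebisection7} for the non-multipleness of the type ${\rm III}$ fiber and declaring the remaining assertions obvious. Your expansion --- identifying the $D_4$ cluster inside the ${\rm I}_1^*$ fiber, checking $s\cdot F=2$ in both configurations, and using the two-blow-up count of Lemma \ref{nodal} (with the section property forcing $\tilde s$ onto an outer component of the $D_4$ star, so that $p_0$ indeed absorbs both blow-ups) --- is exactly the detail the paper leaves implicit.
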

\begin{proof}
The proof of non-multipleness of the fiber of type ${\rm III}$ is the same as that of Lemma
\ref{possiblebisection7}.  The remaining assertions are obvious.
\end{proof}

\begin{lemma}\label{possiblebisection11} 
In case that $f$ has singular fibers of type $({\rm I}_6, {\rm IV}, {\rm I}_2)$, 
the fiber of type ${\rm IV}$ is not multiple and $p_0$ is its singular point.
The bi-section $s$ passes through a singular point of the fiber of type 
${\rm I}_6$ and a singular point of the fiber of type ${\rm I}_2$.
\end{lemma}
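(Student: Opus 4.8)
The plan is to argue exactly as in the proofs of Lemmas \ref{possiblebisection7}--\ref{possiblebisection10}, transporting the question to the induced fibration on $Y$. By the table following Lemma \ref{possibleSing}, an elliptic fibration $f$ on $X$ of type $({\rm I}_6, {\rm IV}, {\rm I}_2)$ is induced by a fibration $g : Y \to {\bf P}^1$ of type $({\rm I}_{12}, {\rm I}_4, {\rm IV}^*)$. I would first record the positions of the canonical points from Lemma \ref{possibleSing4}(1): the eight $A_1$ points lie over the nodes of the ${\rm I}_6$ and ${\rm I}_2$ fibers, and the $D_4$ point lies over the singular point of the ${\rm IV}$ fiber, so $p_0$ is that singular point --- the second assertion. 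For the finer component analysis I would invoke the proof of Lemma \ref{possibleSing}(4): on the ${\rm IV}^*$ fiber of $g$ the integral curves are exactly the three multiplicity-$2$ components, and these together with the central multiplicity-$3$ component form the $D_4$ cycle contracted to $p_0$. In particular the three simple (multiplicity-$1$) components of ${\rm IV}^*$ are non-integral.

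Next I would pass to the lifted section. Since $f$ is special, $s$ is a nodal curve whose preimage $\tilde s$ on $Y$ is a section of $g$ (as in the proof of Lemma \ref{bi-section1}), meeting each fiber once along a simple component. Thus $\tilde s$ meets ${\rm IV}^*$ transversally at a smooth point of one of the non-integral simple components, away from the $D_4$ cycle. The decisive step, just as in Lemma \ref{possiblebisection9}, is that $\pi$ is ramified along a non-integral curve (Proposition \ref{insep}(2)): hence the transverse intersection of $\tilde s$ with that component maps to a tangential intersection of $s$ with the corresponding component of the ${\rm IV}$ fiber, at a smooth, non-canonical point, and $s$ does not pass through $p_0$. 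Consequently $s$ meets the reduced ${\rm IV}$ fiber with local intersection number $2$ concentrated at one point. A multiple fiber $2F_0$, however, would satisfy $s\cdot[F_0]=1$ and so be met by the bi-section transversally in a single point; tangency therefore rules this out, and the ${\rm IV}$ fiber is not multiple. This is the first assertion, and it is precisely the point that differs from the $A_1$ situation of Lemma \ref{possiblebisection2}, where the simple components of ${\rm IV}^*$ were integral and the fiber came out multiple.

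Finally, for the location of $s$ on the remaining fibers I would use Lemma \ref{nodal}: the nodal curve $s$ passes through canonical points accounting for exactly two blow-ups. The meeting with ${\rm IV}^*$ produced no canonical point, so both canonical points must arise from the intersections of $\tilde s$ with the ${\rm I}_{12}$ and ${\rm I}_4$ fibers. Each such intersection is a single transverse point and supplies at most one $A_1$ canonical point, so both must lie on integral components; their images are nodes of the ${\rm I}_6$ and ${\rm I}_2$ fibers, respectively. Hence $s$ passes through a singular point of the ${\rm I}_6$ fiber and a singular point of the ${\rm I}_2$ fiber, which is the third assertion.

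I expect the only genuine subtlety to be the multiplicity computation in the second paragraph --- establishing that a non-integral simple component of ${\rm IV}^*$ forces tangency of $s$ (hence non-multipleness) rather than passage through a canonical point. Everything else is a direct transcription of the component analysis of Lemma \ref{possibleSing}(4) combined with the canonical-point bookkeeping of Lemma \ref{nodal}, as already carried out for the neighbouring fibration types.
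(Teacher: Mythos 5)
Your proposal is correct and follows essentially the same route as the paper, whose proof of this lemma is simply a pointer to the argument of Lemma \ref{possiblebisection9} (non-integrality of the simple components of the ${\rm IV}^*$ fiber forcing tangency, hence non-multipleness, with the rest declared obvious); you have merely written out in full the component analysis from Lemma \ref{possibleSing}(4), the intersection-multiplicity doubling from Proposition \ref{insep}, and the two-canonical-point count from Lemma \ref{nodal} that the paper leaves implicit.
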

\begin{proof}
The proof of the first assertion is similar to that of Lemma \ref{possiblebisection9}.  The remaining
assertion is obvious. 
\end{proof}

Finally we consider the case that the canonical double cover 
$\bar{Y}$ has two rational double points of type $D_4$ or a rational double point of type $D_6$.

\begin{lemma}\label{possiblebisection13}
The canonical cover $\bar{Y}$ has neither a rational double points of type $\tilde{D}_6$ nor two rational double points of type $D_4$.
\end{lemma}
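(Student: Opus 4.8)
The plan is to eliminate the two configurations in Proposition \ref{EHS-sing} that do not occur among the examples of Section \ref{sec3}, namely $R=A_1^{\oplus 4}\oplus D_4^{\oplus 2}$ and $R=A_1^{\oplus 6}\oplus D_6$, by producing in each case a special bi-section forced by Proposition \ref{Cossec} and showing it cannot satisfy Lemma \ref{nodal}. First I would fix the fibration. By Lemma \ref{possibleSing4-2} two copies of $D_4$ force the unique type $({\rm IV},{\rm IV},{\rm IV})$ on $X$, induced by $g$ of type $({\rm IV}^*,{\rm IV}^*,{\rm IV}^*)$; by Lemma \ref{possibleSing3} a single $D_6$ forces $({\rm I}_6,{\rm III})$ on $X$, induced by $g$ of type $({\rm I}_{12},{\rm I}_3^*)$. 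Since $X$ carries nodal curves and $\bar Y$ has only rational double points, the genus one fibration supplied by Proposition \ref{Cossec} is elliptic, hence of the forced type, and is special; thus there is a special bi-section $s$ whose pullback $\tilde s$ is a section of $g$ (all such fibrations admit sections by Theorem \ref{ElkiesSchutt}), meeting each reducible fiber of $g$ in a single simple component. By Lemma \ref{nodal} the nodal curve $s$ passes through exactly two canonical points, i.e.\ $\tilde s$ meets the exceptional locus of $\rho\colon Y\to\bar Y$ in exactly two blow-up centres.

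The heart of the matter is to count these centres using the proof of Lemma \ref{possibleSing}, which records precisely which simple components of each fiber are integral and hence contracted. In the two-$D_4$ case, on each ${\rm IV}^*$ contributing a $D_4$ the integral curves are the central and the three multiplicity-two components, so all three simple tips are non-integral; only on the third ${\rm IV}^*$ are the tips integral. Therefore $\tilde s$ meets a contracted (canonical) component on this last fiber alone, contributing a single blow-up centre, which is too few for a nodal curve. This is the mirror of the count in the proof of Lemma \ref{possibleSing5}, where the very same fiber type instead forced three centres.

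For the $D_6$ case I would first stratify the resolution of $D_6$. A single blow-up of the $D_6$ point yields an exceptional curve carrying one $A_1$ and one $D_4$ singularity; resolving these reconstructs the $D_6$ graph with the two fork-ends produced at the very last blow-ups. Consequently a branch whose strict transform meets a fork-end has passed through three successive blow-up centres. In ${\rm I}_3^*$ the two simple tips lying in the $D_6$ are exactly these fork-ends, while the remaining two tips descend to the components of the ${\rm III}$ fiber. Hence if $\tilde s$ meets a $D_6$-tip then $s$ acquires three canonical points at the $D_6$ point alone; if instead $\tilde s$ meets a tip over the ${\rm III}$ fiber it meets no exceptional curve there and, meeting ${\rm I}_{12}$ in at most one integral component, accumulates at most one centre. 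In neither case does the total equal two, contradicting Lemma \ref{nodal}.

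The main obstacle is exactly this last count: establishing that a branch through a $D_6$ point to a fork-end collects three blow-up centres rather than two. The analogous computation for $D_4$ gives two centres along a fork-end branch, which is why the type-${\rm MII}$ surfaces (with a single $D_4$) do admit such special bi-sections, whereas the extra depth of the $D_6$ resolution rules them out. Once both counts are in hand, the configurations $A_1^{\oplus 4}\oplus D_4^{\oplus 2}$ and $A_1^{\oplus 6}\oplus D_6$ are excluded, leaving only $A_1^{\oplus 12}$ and $A_1^{\oplus 8}\oplus D_4$.
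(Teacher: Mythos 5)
Your proposal is correct and follows essentially the same route as the paper's proof: force the fibration type via Lemmas \ref{possibleSing3} and \ref{possibleSing4-2}, lift the special bi-section to a section of $g$, and count the blow-up centres it meets (one in the two-$D_4$ case; three or zero on the ${\rm I}_3^*$ fibre plus at most one on the ${\rm I}_{12}$ fibre in the $D_6$ case), contradicting the count of exactly two required by Lemma \ref{nodal}. Your explicit stratification of the $D_6$ resolution to justify the ``three successive centres'' claim is a welcome elaboration of a step the paper leaves implicit, and the only slip is cosmetic: in the ${\rm IV}^*$ fibres contributing a $D_4$, the integral curves are the three multiplicity-two components (the central component is merely contracted with them), which does not affect the conclusion that the simple tips survive.
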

\begin{proof}
(due to the referee)

\noindent
If $\bar{Y}$ has a rational double point of type $D_6$, then $f$ has singular fibers of type
$({\rm I}_6, {\rm III})$ (Lemma \ref{possibleSing3}).
The induced fibration $g:Y\to {\bf P}^1$ has singular fibers of type $({\rm I}_{12}, {\rm I}_3^*)$, and 
the special bi-section $s$ induces a section $\tilde{s}$ of $g$.
By Lemma \ref{nodal}, $\tilde{s}$ has exactly two points (including an infinitely near point) contracted
during the blow-down $Y \to \bar{Y}$.  On the other hand, 
by Lemma \ref{possibleSing}, (3),  
$\tilde{s}$ has either three points contracted during $Y\to \bar{Y}$ or no such points 
on the fiber of type ${\rm I}_3^*$, and by Lemma \ref{possibleSing}, (1), 
$\tilde{s}$ has either one point contracted during $Y\to \bar{Y}$ or no such points 
on the fiber of type ${\rm I}_{12}$.  This is a contradiction.

If $\bar{Y}$ has two rational double points of type $D_4$, then $f$ has singular fibers of type
$({\rm IV}, {\rm IV}, {\rm IV})$ (Lemma \ref{possibleSing4-2}).  The induced fibration $g:Y\to {\bf P}^1$
has three singular fibers of type ${\rm IV}^*$ and has a section $\tilde{s}$.
It follows from Lemma \ref{possibleSing}, (4) that $\tilde{s}$ has exactly one point contracted during
$Y \to \bar{Y}$.  This contradicts Lemma \ref{nodal}.
\end{proof}

\begin{remark}
Ekedahl, Hyland and Shepherd-Barron \cite[Corollary 6.16]{EHS} showed that 
the canonical cover $\bar{Y}$ has no two rational double points of type $D_4$ in general setting without the assumption of the existence of a special bi-section.  Very recently Matsumoto \cite{Ma} studied more details of possible singularities on the canonical coverings of Enriques surfaces in characteristic 2.
\end{remark}

\section{Classification}\label{sec6}

In the following, $X$ is an Enriques surface and $Y$ is the minimal resolution of
the canonical cover $\bar{Y}$ of $X$.  We assume that $Y$ is the supersingular $K3$ surface with the 
Artin invariant 1.
If $X$ has no nodal curves, then any elliptic fibration on $X$ has only irreducible fibers.
The induced fibration on $Y$ is one of the list of Theorem \ref{ElkiesSchutt2} which is impossible.
Thus $X$ contains a nodal curve, and hence $X$ has a special elliptic fibration (Proposition \ref{Cossec}).
We fix a special elliptic fibration $f:X \to {\bf P}^1$ with a special bi-section $s$.

\begin{theorem}\label{Main1}

\noindent
\begin{itemize}
\item[$(a)$] Assume that $\bar{Y}$ has only rational double points of type $A_1$.  Then $X$ has the dual graph of twenty $(-2)$-vectors of type ${\rm VII}$ or forty $(-2)$-vectors of type ${\rm MI}$. 
\item[$(b)$]
Assume that $\bar{Y}$ has a rational double point of type $D_4$.  Then $X$ has the dual graph of
forty $(-2)$-vectors of type ${\rm MII}$. 
\end{itemize}
\end{theorem}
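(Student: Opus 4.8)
The plan is to run the combinatorial classification that the lemmas of Sections \ref{sec4} and \ref{sec5} have set up, and then to rigidify it using the uniqueness of genus one fibrations on $Y$. First I would note that Proposition \ref{EHS-sing} leaves four possibilities for the root lattice $R$ generated by the exceptional curves of $Y\to\bar Y$, and that Lemma \ref{possiblebisection13} discards $A_1^{\oplus 4}\oplus D_4^{\oplus 2}$ and $A_1^{\oplus 6}\oplus D_6$; hence $R=A_1^{\oplus 12}$ in case (a) and $R=A_1^{\oplus 8}\oplus D_4$ in case (b), and these two cases are exhaustive. Since $X$ carries a nodal curve, Proposition \ref{Cossec} provides a special elliptic fibration $f:X\to{\bf P}^1$ with a special bi-section $s$, and I would fix such a pair and record its type: the fibration type from Lemma \ref{possibleSing5} (case (a)) or Lemma \ref{possibleSing4} (case (b)), together with the incidence pattern of $s$ supplied by the corresponding bi-section lemmas of Section \ref{sec5}.

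Next I would match this type against the known families. In case (a) the six fibration types of Lemma \ref{possibleSing5} split as follows against the lists of type ${\rm MI}$ (Proposition \ref{MIfibrations}, Lemma \ref{bi-section1}) and type ${\rm VII}$ (Lemma \ref{bi-section2}): the types $({\rm I}_3,{\rm I}_3,{\rm I}_3,{\rm I}_3)$ and $({\rm I}_4,{\rm I}_4,{\rm III})$ occur only for ${\rm MI}$, the types $({\rm I}_9,{\rm I}_1,{\rm I}_1,{\rm I}_1)$ and $({\rm I}_8,{\rm III})$ only for ${\rm VII}$, and for the two shared types the bi-section data separates them. Concretely, for $({\rm I}_5,{\rm I}_5,{\rm I}_1,{\rm I}_1)$ the trichotomy of Lemma \ref{possiblebisection1} assigns the bi-section through one ${\rm I}_5$-node and one ${\rm I}_1$-node to ${\rm MI}$ and the two bi-sections through a pair of nodes of equal fiber type to ${\rm VII}$; for $({\rm I}_6,{\rm I}_2,{\rm IV})$ the dichotomy of Lemma \ref{possiblebisection2} assigns the bi-section through the ${\rm I}_2$-node to ${\rm MI}$ and the one through the ${\rm I}_6$-node to ${\rm VII}$. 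Thus $(f,s)$ realizes the type of exactly one of ${\rm MI}$, ${\rm VII}$. In case (b) the five types of Lemma \ref{possibleSing4} are precisely those of type ${\rm MII}$ (Proposition \ref{sing-fibers}), and the bi-section Lemmas \ref{possiblebisection7}--\ref{possiblebisection11} reproduce exactly the incidences recorded in Lemma \ref{bi-section3}; since ${\rm MI}$ and ${\rm VII}$ have only $A_1$-singularities, ${\rm MII}$ is the only candidate and no disambiguation is needed.

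The final and most delicate step is to pass from \emph{$(f,s)$ has the type of the example $X'$} to \emph{$X$ has the nodal-curve dual graph of $X'$}. For this I would lift $f$ to the induced fibration $g:Y\to{\bf P}^1$, which is one of the eight fibrations of Theorem \ref{ElkiesSchutt2}; by Theorem \ref{ElkiesSchutt} it is the unique model over $k$ of its type, hence identified with the fibration on the resolved canonical cover of $X'$. The twelve contracted curves are prescribed fiber components by Lemma \ref{possibleSing}, and $s$ lifts to a section $\tilde s$ of $g$ whose two blown-up points and whose intersection pattern with the fibers are fixed by the bi-section type (Lemma \ref{nodal}). I expect the main obstacle to lie exactly here: one must argue that fixing the fibration type together with the bi-section type determines the pair $(g,\tilde s)$ and the set of integral curves up to ${\rm Aut}(Y)$, so that the images on $X$ of the non-integral nodal curves of $Y$ reproduce the full dual graph of $X'$. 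This rigidity rests on the transitivity of the finite Mordell--Weil group and of ${\rm Aut}(Y)$ on sections of the prescribed incidence type; granting it, $X$ acquires the dual graph of twenty (type ${\rm VII}$) or forty (types ${\rm MI}$, ${\rm MII}$) $(-2)$-vectors, as claimed.
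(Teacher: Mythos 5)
Your proposal is correct and follows essentially the same route as the paper: restrict the singularity type via Proposition \ref{EHS-sing} and Lemma \ref{possiblebisection13}, fix a special fibration with special bi-section via Proposition \ref{Cossec}, match the fibration/bi-section type against the examples using the lemmas of Sections \ref{sec4}--\ref{sec5}, and then invoke the uniqueness of the lifted fibration on $Y$ (Theorem \ref{ElkiesSchutt}) to recover the full dual graph. The rigidity point you flag at the end is exactly the step the paper also leans on (its ``coincides with the unique one of the Examples'' claim), so your argument is aligned with, and if anything slightly more explicit than, the published proof.
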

\begin{proof}
By Lemmas \ref{possibleSing4}, \ref{possibleSing5}, \ref{possiblebisection13}, 
we know possible singular fibers and the positions of
canonical points on the fibers.  Moreover, for each special elliptic fibration, we know 
the configuration of singular fibers and special bi-sections (Lemma \ref{possiblebisection1} -- \ref{possiblebisection11}).  Fortunately, if we take any possible special bi-section $s$, it coincides with the unique one of Examples of type ${\rm VII}$,
type ${\rm MI}$ or type ${\rm MII}$.  For example, in Lemma \ref{possiblebisection1}, 
the cases (1) and (2) correspond to the Example of type ${\rm VII}$ (Lemma \ref{bi-section2}, (1))
and 
the case  (3) corresponds to the Example of type ${\rm MI}$ (Lemma \ref{bi-section1}, (1)).
In each case, the pullback of the fibration gives an elliptic fibration on the supersingular
$K3$ surface $Y$ which is unique up to isomorphisms (Theorem \ref{ElkiesSchutt}).  Therefore
we have the remaining nodal curves on $Y$ as sections or multi-sections in each case, and hence 
we obtain the remaining nodal curves on $X$ whose dual graph is the same as that of the corresponding Example.
In case of type ${\rm VII}$, the dual graph of nodal curves is already determined.
In case of type ${\rm MI}$ or ${\rm MII}$, the remaining 10 or 12 $(-2)$-vectors are determined
by the obtained 30 or 28 nodal curves.  Thus we have the dual graph of $20$, $40$, or $40$ vertices, respectively, which satisfies the condition in Proposition \ref{Vinberg}.
\end{proof}
\noindent
Thus we have the main theorem in this paper.

\begin{theorem}\label{Main2}
There exist exactly three types of Enriques surfaces such that the minimal resolutions of the canonical double covers of these Enriques surfaces are the supersingular $K3$ surface with the Artin invariant $1$.
\end{theorem}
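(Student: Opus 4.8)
The plan is to combine the classification of the singularities of $\bar{Y}$ with the combinatorial rigidity furnished by Theorem \ref{Main1} and the uniqueness of elliptic fibrations on $Y$ (Theorem \ref{ElkiesSchutt}). First I would note that $X$ must contain a nodal curve: otherwise every elliptic fibration on $X$ would have only irreducible fibres, so the induced fibration on $Y$ would have no reducible fibres, contradicting Theorem \ref{ElkiesSchutt2}. Hence by Proposition \ref{Cossec} the surface $X$ carries a special elliptic fibration $f$ with a special bi-section $s$, and we are in the setting of Sections \ref{sec4} and \ref{sec5}. By Proposition \ref{EHS-sing} the root lattice generated by the exceptional curves of $Y\to\bar{Y}$ is one of $A_1^{\oplus 12}$, $A_1^{\oplus 8}\oplus D_4$, $A_1^{\oplus 4}\oplus D_4^{\oplus 2}$, $A_1^{\oplus 6}\oplus D_6$. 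Since $f$ admits a special bi-section, Lemma \ref{possiblebisection13} rules out the last two cases, so $\bar{Y}$ has either twelve nodes, or eight nodes together with one rational double point of type $D_4$.

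In each surviving case Theorem \ref{Main1} applies directly: when $\bar{Y}$ has only nodes, $X$ carries the dual graph of the twenty $(-2)$-vectors of type ${\rm VII}$ or of the forty $(-2)$-vectors of type ${\rm MI}$, and when $\bar{Y}$ has a $D_4$-point, $X$ carries the dual graph of the forty $(-2)$-vectors of type ${\rm MII}$. These three dual graphs are pairwise distinct, the numbers of nodal curves being $20$, $30$ and $28$, with the singularity type of the cover distinguishing the third from the first two. Thus at most three types can occur, and the three families explicitly constructed in Section \ref{sec3}, namely Theorems \ref{thmMI} and \ref{mainMII} and the type ${\rm VII}$ family, show that all three in fact occur.

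The remaining and most delicate point is to upgrade the statement \emph{``$X$ has the same dual graph as one of the three models $X'$''} to \emph{``$X$ is isomorphic to a member of the corresponding family''}, since a priori the large moduli space of Enriques surfaces with supersingular canonical cover, the ${\bf P}^1$-bundle of \cite{EHS}, might contain a surface sharing a dual graph yet lying outside our families. To close this gap I would lift $f$ to the induced fibration $g:Y\to{\bf P}^1$, whose singular-fibre type is determined by the analysis of Section \ref{sec4}; by the uniqueness clause of Theorem \ref{ElkiesSchutt} this $g$ is the unique model over $k$, so the pair $(Y,g)$ is rigidly identified with the canonical cover of the model $X'$. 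Transporting the sections and multisections of $g$ back to $X$ recovers exactly the configuration of nodal curves and non-effective $(-2)$-classes of $X'$, so $X$ and $X'$ have the same $N$-marking on their common supersingular cover. I would then invoke Ekedahl, Hyland and Shepherd-Barron \cite[Theorem 3.21]{EHS}, together with Remark \ref{EHSB-unique}, to match the Enriques quotient of the marked cover with the derivations $D_{a,b}$ of Section \ref{sec3}, forcing $X$ into the family of $X'$ and completing the proof.

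I expect this last upgrade to be the main obstacle. The combinatorial half, via Theorem \ref{Main1}, is essentially bookkeeping over the finite list of Theorem \ref{ElkiesSchutt2}; the real content is showing that a matching dual graph leaves no room for a further family. The two ingredients that make this work are the uniqueness of each elliptic fibration on $Y$ and the structural description of the admissible Enriques quotients in \cite[Theorem 3.21]{EHS}, and the crux is verifying that the derivations $D_{a,b}$ exhaust the relevant points of the ${\bf P}^1$-bundle.
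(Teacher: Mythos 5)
Your proposal is correct and follows essentially the same route as the paper: the paper deduces Theorem \ref{Main2} directly from Theorem \ref{Main1} (whose proof already contains the lifting to $Y$, the appeal to the uniqueness of elliptic fibrations in Theorem \ref{ElkiesSchutt}, and the transport of sections back to $X$ to pin down the dual graph), with Lemma \ref{possiblebisection13} eliminating the $D_6$ and $D_4^{\oplus 2}$ singularity types and Remark \ref{EHSB-unique} invoking \cite[Theorem 3.21]{EHS} for the final exhaustiveness of the three constructed families. Your identification of the ``same dual graph implies membership in the family'' step as the delicate point, and your resolution of it via the two-dimensionality of regular derivations on the canonical cover, matches the paper's own treatment.
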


\begin{remark}\label{EHSB-unique}
By the result of Ekedahl, Hyland and Shepherd-Barron \cite[Theorem 3.21]{EHS}, the canonical cover of
each of the examples of Enriques surfaces of type MI, MII, VII has exactly 2-dimensional regular derivations.
Thus it follows that our examples give all Enriques surfaces such that the minimal resolutions of the canonical double covers of these Enriques surfaces are the supersingular $K3$ surface with the Artin invariant $1$.
\end{remark}

\end{document}